\newenvironment{mathlist}
{\begin{enumerate}[label={\upshape(\roman*)}, align=left, widest=iii, leftmargin=*]}
{\end{enumerate}\ignorespacesafterend}
\renewcommand*{\defas}{%
\mathrel{\rlap{\raisebox{0.3ex}{$\m@th\cdot$}}\raisebox{-0.3ex}{$\m@th\cdot$}}=}
\newtheorem*{rep@theorem}{\rep@title}
\newcommand{\newreptheorem}[2]{%
\newenvironment{rep#1}[1]{%
 \def\rep@title{#2 \ref*{##1}}%
 \begin{rep@theorem}}%
 {\end{rep@theorem}}}
\DeclareRobustCommand\bigop[2][1]{%
  \mathop{\vphantom{\sum}\mathpalette\bigop@{{#1}{#2}}}\slimits@
}
\newcommand{\bigop@}[2]{\bigop@@#1#2}
\newcommand{\bigop@@}[3]{%
  \vcenter{%
    \sbox\z@{$#1\sum$}%
    \hbox{\resizebox{\ifx#1\displaystyle#2\fi\dimexpr\ht\z@+\dp\z@}{!}{$\m@th#3$}}%
  }%
}
\newtheorem{theorem}{Theorem}[section]
\newtheorem{proposition}[theorem]{Proposition}
\newtheorem{lemma}[theorem]{Lemma}
\newtheorem{corollary}[theorem]{Corollary}
\theoremstyle{definition}
\newenvironment{definition}
  {\pushQED{\qed}\definitionX}
  {\popQED\enddefinitionX}
\newenvironment{notation}
  {\pushQED{\qed}\notationX}
  {\popQED\endnotationX}
\newenvironment{remark}
  {\pushQED{\qed}\remarkX}
  {\popQED\endremarkX}
\newcommand{\newtext}[1]{#1}
\newcommand{\TweakSpacingL}[1]{\,#1}
\newcommand{\TweakSpacingLR}[1]{\,#1\!}
\newcommand{\biasedby}[1][normalsize]{%
  \IfEqCase{#1}{%
    {normalsize}{\mathbin{\dagger}}%
    {big}{\mathrel{\mathlarger{\dagger}}}%
  }[\PackageError{biasedby}{Undefined option to biasedby: #1}{}]%
}
\newcommand{\nucirc}{\nu_\circ}
\newcommand{\ExpecCirc}[1]{\Expec[][\circ]{#1}}
\newcommand{\ExpecCircNormSize}[1]{\Expec*[][\circ]{\normalsize}{#1}}
\newcommand{\altoverline}[1]{\,\overline{\!#1}}
\newcommand{\altunderline}[1]{\underline{#1\!}\,}
\newcommand{\altoverlinesup}[2]{\,\smash{\overline{\!#1}}^{(#2)}}
\newcommand{\altunderlinesup}[2]{\smash{\underline{#1\!}}^{\:(#2)}}
\newcommand{\kettenbruch}{\DOTSB\bigop[.96]{\mathrm{K}}}
\newcommand{\mrca}{\wedge}
\newcommand{\dt}[1]{dt}
\newcommand{\GHPdist}{d_{\mathrm{GHP}}}
\newcommand{\Xmut}{X^{\mathfrak{m}}} 
\newcommand{\paste}{\wr}
\newcommand{\dis}{\underline{\mathrm{dis}}}
\newcommand{\rf}{r_{\!f}}
\newcommand{\rg}{r_{\!g}}
\newcommand{\nf}{n_{\!f}}
\newcommand{\rn}{r_{\!n}}
\newcommand{\gammabar}{\,\bar{\!\gamma}}
\newcommand{\gammatilde}{\,\tilde{\!\gamma}}
\title{A branching process with coalescence to model random phylogenetic networks}
\begin{document}

\newgeometry{top=0.5cm, bottom=1.8cm}

\author[1]{François Bienvenu}
\author[2]{Jean-Jil Duchamps}

\renewcommand\Affilfont{\itshape\small\renewcommand{\baselinestretch}{1}}
\affil[1]{Institute for Theoretical Studies, ETH Zürich,
8092 Zürich, Switzerland\vspace{1ex}}
\affil[2]{Laboratoire de mathématiques de Besançon, UMR 6623,               
Université de Franche-Comté, CNRS, F-25000 Besançon, France}

\maketitle

\begin{abstract}
We introduce a biologically natural, mathematically tractable model of
random phylogenetic network to describe evolution in the presence of
hybridization. One of the features of this model is that the hybridization
rate of the lineages correlates negatively with their phylogenetic distance.
We give formulas\,/\,characterizations for quantities of biological interest
that make them straightforward to compute in practice. We show that the
appropriately rescaled network, seen as a metric space, converges to the
Brownian continuum random tree, and that the uniformly rooted network has a
local weak limit, which we describe explicitly.
\end{abstract}

\textbf{Keywords:}
reticulate evolution;
explicit phylogenetic network;
random DAG;
tree-like structure;
logistic branching process;
local limit;
continuum random tree.

\renewcommand{\cfttoctitlefont}{\large\bf}
\setlength{\cftbeforesecskip}{6pt}
{\footnotesize \tableofcontents}

\thispagestyle{empty}
\pagebreak
\restoregeometry

\section{Introduction} \label{secIntro}

\subsection{Biological context} \label{secBio}

Random trees play a central role in evolutionary biology: ultimately, much of
what we know about evolution relies on a random tree being used as
a null model. Meanwhile, the genomic revolution of the past decades has
shown that phenomena once thought to play a minor role in large-scale
evolution, such as hybrid
speciation~\cite{Linder2004reconstructing,Mallet2007hybrid} or horizontal gene
transfers~\cite{Bergthorsson2003,Hao2004patterns, Mower2004gene}, are in fact
widespread and crucial to our understanding of evolutionary processes.
As a result, there have been growing calls by biologists to replace trees by
networks when studying
phylogenies~\cite{Doolittle2007pattern,Dagan2009getting,Bapteste2013},
which lead to the emergence of the flourishing field of phylogenetic networks
(see e.g.~\cite{KPKW22} for a recent review).

Despite this, there is still a notable lack of biologically relevant,
mathematically tractable models of random phylogenetic networks. To the best of
our knowledge, so far only two models of random phylogenetic networks have been
studied extensively from a probabilistic standpoint: uniform ranked
tree-child networks~\cite{BLS22} and uniform \mbox{level-$k$} networks
\cite{Stu22}. Uniform ranked tree-child networks are generated by a biologically
natural process where species split at constant rate and pairs of species
hybridize at a constant rate. They turn out to be highly
tractable~\cite{BLS22,CFY22,FLY22}. However, they fail to take into account
the fact that phylogenetically distant species are less likely to hybridize
than closely related ones, which results in a very non-tree-like structure
whose biological relevance is questionable. By contrast, uniform level-$k$
networks have a tree-like large-scale structure~\cite{Stu22}; but they do not
have a biological interpretation that would justify their relevance as a model
of random phylogenetic network, and they are
not as mathematically tractable (at least for generic values of~$k$).

In this work, we introduce a model of random phylogenetic network
that has a natural biological interpretation while remaining mathematically
tractable. The idea of this model is to consider species that (1) speciate
and go extinct at constant rates and (2) hybridize, subject to some constraints:
each species has a type, which can be thought of as a proxy for the
genetic distance, and species of the same type hybridize at a constant rate.
Types are created at a constant rate in an infinite-allele fashion,
and inherited by descendants. The formal description of the model is
given in the next section, along with an overview of our main results.

\subsection{Setting and main results} \label{secResults}

Starting from one colored lineage at time~$t=0$, consider the continuous-time
interacting particle system where:
\begin{itemize}
  \item each lineage splits into two lineages at rate $1$ (\emph{branching});
  \item each lineage dies at rate \newtext{$\alpha > 0$} (\emph{death});
  \item each pair of lineages of the same color merge at rate \newtext{$2\beta > 0$} (\emph{coalescence});
  \item each lineage takes a new, never-seen-before color at rate \newtext{$\mu > 0$}
    (\emph{mutation}).
\end{itemize}

As illustrated in Figure~\ref{figDefProcess}, this process defines a
time-embedded random network which can be seen as a random metric measure
space $(\mathscr{G}, d_{\mathscr{G}}, \lambda_{\mathscr{G}})$. Formally,
a point $x \in \mathscr{G}$ corresponds to a lineage $\ell$ and a time~$t$
at which that lineage is alive. Since the lineages can be seen as segments,
$\mathscr{G}$ can be seen as a collection of segments glued together at their
endpoints, and $\lambda_{\mathscr{G}}$ as the usual Lebesgue measure on this
union of segments.

There is a natural metric $d_{\mathscr{G}}$ on $\mathscr{G}$, obtained by
defining $d_{\mathscr{G}}(x,y)$ to be the
length of a shortest path between two points $x, y \in \mathscr{G}$.
Since genetic material cannot be transmitted back in time, a more biologically
relevant notion of distance between two points $x,y\in\mathscr{G}$
would consist in considering only the paths that lie in the past of the focal points.
Letting $h(x) = t$ denote the \emph{height} of a
point $x = (\ell, t) \in \mathscr{G}$ and $x \mrca y$ the most recent common
ancestor of $x$ and $y$, this notion of distance can be expressed as
\[
  h(x) + h(y) - 2 h(x \mrca y).
\]
However, this does not define a distance in the
mathematical sense, as the triangle inequality is not satisfied when
the network contains coalescence points.

\begin{figure}[t]
  \centering
  \includegraphics[width=0.5\linewidth]{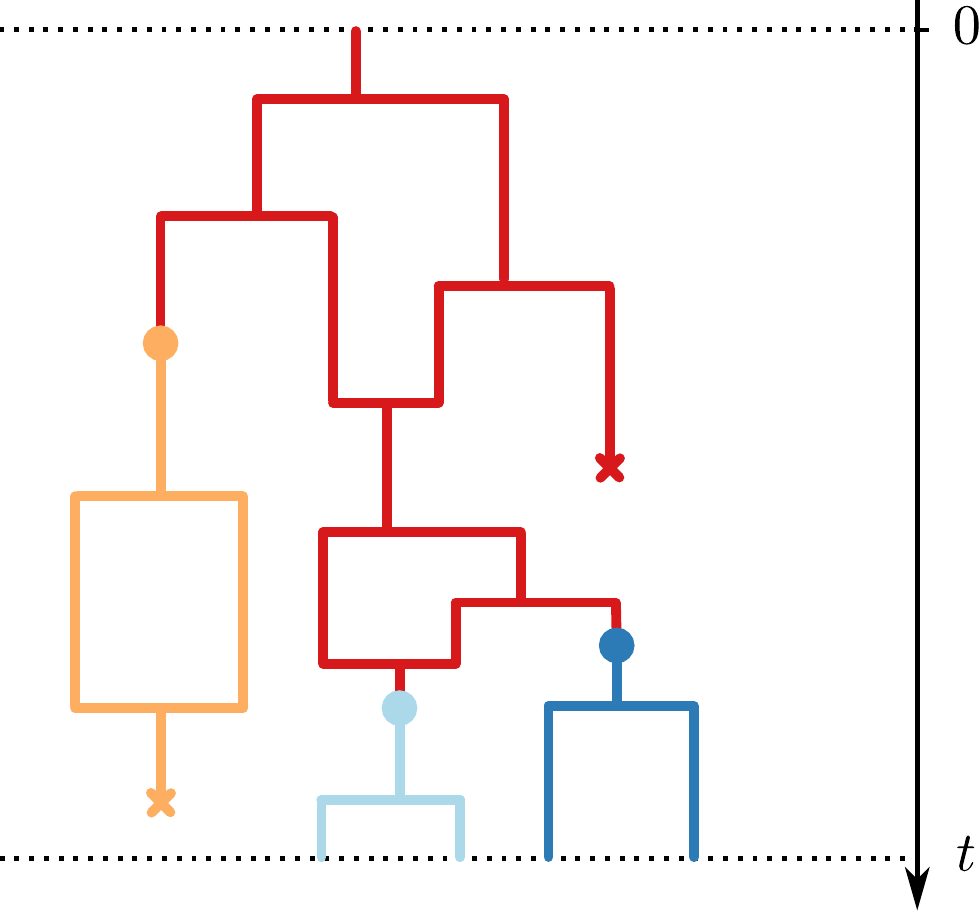}
  \caption{Graphical depiction of a realization of the process generating
  the network $\mathscr{G}$. The vertical axis is the time,
  flowing from top to bottom, and the vertical lines represent the lineages.
  Dots correspond to mutations (i.e.\ to a lineage changing color)
  and crosses correspond to deaths (i.e.\ to a lineage stopping).
  Horizontal lines correspond to either branching or coalescence,
  and serve to indicate the genealogical relationship between lineages; they
  should be treated as having length~0.}
  \label{figDefProcess}
\end{figure}

In this document, we are mostly interested in the structure of
$\mathscr{G}$ conditioned on being large. More specifically, we consider
$(\mathscr{G}_n, d_{\mathscr{G}_n}, \lambda_{\mathscr{G}_n})$, the metric
measure space having the law of $(\mathscr{G}, d_{\mathscr{G}},
\lambda_{\mathscr{G}})$ conditioned on having $n$ colors, and we study various
limits of $\mathscr{G}_n$ as $n$ goes to infinity.

For this, it will be convenient to see $\mathscr{G}$ as a
decorated Galton--Watson tree. For each color~$k$, let $\mathscr{X}_k$
denote the subnetwork of $\mathscr{G}$ formed by the lineages of color~$k$,
endowed with the information of which endpoint corresponds to the
creation of the color~$k$ (henceforth referred to as the \emph{root} of
$\mathscr{X}_k$) and of which of the other endpoints correspond to
mutations as opposed to deaths.
Let $\mathscr{T}$ denote the genealogical tree of the colors -- that is,
the ordered tree whose vertices are the colors of the lineages and where
$k'$ is a child of $k$ if and only if $k'$ was created by the mutation
of a lineage of color~$k$, the children of a color being ordered according
to the order of apparition of the corresponding mutations.
Finally, let $\mathscr{T}^\star$ denote the tree~$\mathscr{T}$ where each
vertex~$k$ is decorated by the corresponding network $\mathscr{X}_k$.
Note that $\mathscr{G}$ and $\mathscr{T}^\star$ contain the same information,
since to reobtain $\mathscr{G}$ from $\mathscr{T}^\star$ it suffices to glue,
for each color $k$ and each child of that color, the root of the decoration the
$i$-th child of~$k$ to the endpoint of $\mathscr{X}_k$ corresponding to its
$i$-th mutation.

In Section~\ref{secFourreTout}, we list miscellaneous results that are used
throughout the document, starting with properties of the process describing the
dynamics of the number of lineages of a given color (the so-called \emph{logistic
branching process}). We then study the random variable $M$ giving
the number of new colors that a color produces over its lifetime, i.e.\
the offspring distribution of the Galton--Watson tree~$\mathscr{T}$.
We show that its expected value is given by
\[
  \Expec{M} \,=\;
  \mu \sum_{j \geq 1} \prod_{k = 1}^j \frac{1}{\rho_k} \,, 
\]
where $\rho_k = \alpha + \mu + (k-1) \beta$. Since every
color has an almost surely finite lifetime, the process
generating $\mathscr{G}$ goes extinct with probability~1 if and only if
${\Expec{M} \leq 1}$. We also show that the probability generating function
of~$M$ can be expressed as the continued fraction
\[
  g(z) \;=\; \cfrac{\alpha + \mu z}{1 + \rho_1 -
  \cfrac{\alpha + \beta + \mu z}{1 + \rho_2 -
  \cfrac{\alpha + 2\beta + \mu z}{1 + \rho_3 - \ddots}}} \;. 
\]
This expression makes it straightforward to numerically compute the probability
of extinction of the process -- that is, the smallest fixed-point of $g$
in $[0, 1]$. Similarly, we give a characterization of the
asymptotic growth rate of the total number of lineages that makes it possible
to compute it in practical applications.

In Sections~\ref{secCRT} and~\ref{secLocalLimit}, we study the geometry of
the network $\mathscr{G}_n$. Section~\ref{secCRT} deals with the
global, large-scale structure of $\mathscr{G}_n$ as $n$ goes to infinity. This
structure is tree-like: in Theorem~\ref{thmCRTconvergence} we show that, letting
$|\mathscr{G}_n| = \lambda_{\mathscr{G}_n}(\mathscr{G}_n)$, for some
well-characterized constant $C$ the rescaled space
\[
  \mleft(\mathscr{G}_n,\,
         \tfrac{C}{\sqrt{n}} d_{\mathscr{G}_n},\,
         \tfrac{1}{|\mathscr{G}_n|} \lambda_{\mathscr{G}_n}\mright)
\]
converges to Aldous' Brownian continuum
random tree in distribution for the Gromov--Hausdorff--Prokhorov topology.
Finally, Section~\ref{secLocalLimit} focuses on the local structure of
$\mathscr{G}_n$: we show that $\mathscr{G}_n$ rooted at a uniform point
has a local weak limit, which we describe explicitly.

\subsection{Comments and perspectives} \label{secComments}

Our proof of the convergence to the CRT is based on~\cite{Stu22}, where most
of the ideas that we use in Section~\ref{secCRT} can already be found.
Nevertheless, some specificities of our model -- in particular the fact that
the number of new colors produced by a color during its lifetime and
the total length of the corresponding subnetwork are not bounded random
variables -- require a different treatment and have necessitated a
fine-grained study of the logistic branching process with mutation.
\newtext{The existence of various local weak limits was also obtained in \cite{Stu22}. The
ideas are similar, insofar as we are also dealing with local weak limits of
blow-ups of Galton--Watson trees. However, there are some notable differences
-- such as the fact that our focal point is chosen uniformly with
respect to the length measure of our time-embedded network (as opposed to
uniformly on the vertices of the underlying graph) and that we are able to give a
more explicit description of the local geometry of the limit.}

In an effort to make this paper accessible to mathematical biologists who do not
have specific knowledge about Galton--Watson trees or Gromov--Hausdorff--Prokhorov
convergence, we have strived to make it as self-contained as possible by
(1) providing detailed reminders about most of the notions and results that
are used and (2) whenever possible, expressing our results as general
statements that are not tied to our particular setting. In particular,
Proposition~\ref{propKeyToCRT} provides a general recipe for proving
Gromov--Hausdorff--Prokhorov convergence to a random
$\R$-tree, and Lemma~\ref{lemUniformBoundSumF} makes it straightforward to
apply this proposition to decorated Galton--Watson trees.

We close this introduction by mentioning an interesting line of research:
our study hinges on the fact that our model can be seen as a decorated
Galton--Watson tree. This crucial connection stems from the fact that the
hybridization rate is a 0-1 function of the phylogenetic distance, which has
the simple form $d(\ell, \ell') = 1$ if $\ell$ and $\ell'$ are the same color,
and 0 otherwise.  However, from a modelling point of view it would be more
natural to use a more nuanced notion of phylogenetic distance, and to let the
phylogenetic distance be a gradually decreasing function of that distance.

For instance -- as a first step and in keeping with the idea of colors representing
incompatibility alleles -- one could let lineages carry several colors, and make
the hybridization rate between two lineages a decreasing function of the number of colors
that differ between these lineages. Based on the biological interpretation,
one might expect such a model to have properties that are very similar to our
model. However, because the link with Galton--Watson trees is lost, it is not
clear whether this is the case, and how to study this.  Therefore, studying
such models of phylogenetic networks -- whose large-scale geometry is expected
to be tree-like, even though there is no immediate, rigorous connection with
branching processes -- seems like an interesting and challenging problem that
will likely require developing new tools and methods.

\section{Probability of extinction and growth rate} \label{secFourreTout}

\subsection{The logistic branching process} \label{secLogistic}

Throughout this document, we denote by $X = (X_t : t \geq 0)$ the process
counting the number of lineages of the first color. It is a birth-death
process started from $X_0 = 1$, killed in~$0$, and with
transition rates:
\begin{itemize}
  \item $k \to k+1$ at rate $k$;
  \item $k \to k-1$ at rate $k \rho_k = (\alpha + (k -1) \beta + \mu) k$.
\end{itemize}
This process has been called the \emph{branching process with logistic growth}
(or, more succinctly, the \emph{logistic branching process}) and has been
studied, e.g, in \cite{Lam05,Par18}.
It is also a special case of a branching process with interactions,
see~\cite{Kal02,CaPL21,OjPa22}.

The qualitative behaviour of $X$ can be described as transient
fluctuations in a potential well. Indeed, letting
$K = 1 + (1-\alpha-\mu) / \beta$,
when $X$ is smaller than $K$ it tends to increase whereas when it is greater
than $K$ it tends to decrease. Thus, in particular when $K$ is large,
typical trajectories of $X$ quickly relax towards a quasi-stationary
distribution and then fluctuate until they eventually hit~0, which happens
in finite time with probability~1.

Although this qualitative behaviour is well-understood,
the quadratic term in the death rate makes the obtention of exact quantitative
results difficult -- and, to some extent, impossible. For instance, a classic
approach to study birth-death processes consists in using the Kolmogorov forward
equations to obtain a characterization of the probability generating function
$f(z, t) = \Expec*{\normalsize}{z^{X_t}}$ 
as the solution of a partial differential equation.
Here, standard calculations show that $f$ is the unique analytic solution on
$\ClosedInterval{0, 1} \times \R_+$ of
\[
  \partial_t f \;=\;
  (z - \alpha - \mu)(z - 1)\, \partial_z f \;+\; \beta z(1-z)\,  \partial_{zz} f
\]
with $f(z, 0) = z$. However, this partial differential equation is known
not to have a closed-form solution -- see Proposition~1.2 in~\cite{ACM20}.
Another powerful approach to study birth-death processes is the
integral representation of the transition probabilities
using orthogonal polynomials introduced by
Karlin and McGregor \cite{KaMc57a,KaMc57b}, but to our knowledge in the
case of the logistic branching process this does not yield useful
explicit expressions.

One of the important properties of the logistic branching process is that
it comes down from infinity (meaning that there is a unique way to start it from
$X_0 = \infty$ and yet have $X_t < \infty$ for any $t > 0$), as shown by
Lambert in \cite{Lam05}. We denote by $\E_{\infty}$ the
expectation under the initial condition $X_0 = \infty$.
A recurring quantity throughout this paper is the extinction time
$T = \inf\Set{t \geq 0 \suchthat X_t = 0}$. In his Theorem~2.3, Lambert
gives Laplace transform of $T$ under $\E_{\infty}$ as a function of the
solution of a Riccati equation, and shows that its expected value is finite. In
fact, $T$ also has finite exponential moments under $\E_{\infty}$. This can be
deduced, e.g, from~\cite[Proposition~2.4]{FLZ20}
or~\cite[Proposition~2.2]{BMR16}, and will play an important
role in our study~-- even though, for reasons that will become clear,
we actually need a variant of this result
(namely Lemma~\ref{lemLExpMomentUnderMBiais} in Section~\ref{secTechLemmas}).

%

Because in our setting the mutations associated to the logistic branching
process play a crucial role, the following change of measure will be useful.
In what follows, we fix the parameters $\alpha$
and $\beta$, and we denote by $\E_{\mu}$ the expectation under a
logistic branching process with mutation rate~$\mu$.

\begin{proposition} \label{propMeasureChangeLogisticBP}
Let $M$ be the number of mutations associated to the logistic branching process
$X$, and let $L = \int_0^\infty X(t)\,dt$. Then, for any positive number $s$
and any nonnegative measurable functional $f$ of the trajectory of $X$,
\[
  \Expec[\mu]{f(X)\,s^M} \;=\;
  \Expec[s\mu]{f(X)\,e^{(s-1)\mu L}} \,.
\]
\end{proposition}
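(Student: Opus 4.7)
The plan is to establish the identity as a Girsanov-type change of measure between $\mathbb{P}_\mu$ and $\mathbb{P}_{s\mu}$ on the natural augmentation of $X$ that records the type of each jump.

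First, I would view $X$ as the marginal of a continuous-time Markov chain whose jumps are labeled either as a birth (rate $k$), a death ($\alpha k$), a coalescence ($\beta k(k-1)$), or a mutation ($\mu k$) when $X_{t^-}=k$. Then the pair $(X,M)$ is a deterministic functional of this augmented chain up to its (almost surely finite) extinction time $T$, and $L=\int_0^T X_s\,ds$ is simply the area under the trajectory.

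Next, I would write out the likelihood of a sample augmented path with $N$ jumps at times $0<t_1<\dots<t_N=T$, consecutive states $1=x_0,x_1,\dots,x_N=0$, and types $j_1,\dots,j_N$; relative to a natural reference measure it factorizes as
\[
  \prod_{i=1}^N \lambda_{j_i}(x_{i-1})\,\exp\Bigl(-\int_0^T \Lambda_{\mu}(X_s)\,ds\Bigr),
\]
where $\lambda_j(k)$ is the rate of event $j$ at state $k$ and $\Lambda_{\mu}(k)=(1+\alpha+(k-1)\beta+\mu)k$ is the total rate. Replacing $\mu$ by $s\mu$ only affects the mutation ingredient: each mutation rate $\mu k$ is multiplied by $s$ (contributing $s^M$ from the product), while the total rate is increased by $(s-1)\mu k$ (contributing $\exp(-(s-1)\mu L)$ from the exponential). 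This yields the Radon--Nikodym derivative
\[
  \frac{d\mathbb{P}_{s\mu}}{d\mathbb{P}_{\mu}} \;=\; s^M\exp\bigl(-(s-1)\mu L\bigr)
\]
on the $\sigma$-algebra generated by the augmented chain up to $T$, and the proposition follows by rearranging: $\Expec[\mu]{f(X)\,s^M} = \Expec[s\mu]{f(X)\,s^M \cdot s^{-M}e^{(s-1)\mu L}} = \Expec[s\mu]{f(X)\,e^{(s-1)\mu L}}$.

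The main obstacle is to justify this change of measure at the random extinction time rather than at a deterministic time. I would first establish the identity on the $\sigma$-algebra $\mathcal{F}_{T_n}$ generated by the chain up to its $n$-th jump time $T_n$, where the density above is almost surely bounded and the path-decomposition argument is elementary, and then pass to the limit $n\to\infty$ using $T_n\uparrow T$, $M_n\uparrow M$ and $\int_0^{T_n}X_s\,ds\uparrow L$. For $s\leq 1$ this is immediate by monotone convergence (since $e^{(s-1)\mu L}\leq 1$), whereas for $s>1$ it hinges on the finite exponential moments of $L$ under $\mathbb{P}_{s\mu}$, which follow from the results already cited from~\cite{Lam05,FLZ20,BMR16} and ensure that the right-hand side is finite whenever the left-hand side is.
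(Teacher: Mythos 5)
Your proof is correct and is essentially the same as the paper's: you expand the likelihood of the augmented chain with labeled jumps and observe that replacing $\mu$ by $s\mu$ multiplies each mutation rate by $s$ (giving $s^M$) and increases the total exit rate at state $k$ by $(s-1)\mu k$ (giving $e^{-(s-1)\mu L}$), which is exactly the paper's manipulation of the trajectory density, merely repackaged as a Radon--Nikodym derivative. One small remark: the truncation-and-limit step you propose is unnecessary, and the case distinction $s\leq 1$ versus $s>1$ is a red herring --- the paper sums directly over all trajectories (each of which reaches $0$ after finitely many jumps, almost surely), and since every term is nonnegative, Tonelli's theorem gives the identity in the extended-real sense for every $s>0$ with no integrability hypothesis on either side.
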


\begin{proof}
For this proof, it will be convenient to use the ``extended'' chain~$\bar{X}$
which, in addition to the trajectory of $X$, contains the information about
which transitions correspond to mutations. In other words,
$\bar{X}$ is a continuous-time Markov chain where there are two distinct types of transitions from $i$~to~$j$, one with rate $q_{ij}^\circ$ and one with rate $q_{ij}^\bullet$, where
\[
  q_{ij}^\circ \;=\; i\,\Indic{j = i + 1} + (\rho_i - \mu)\,i\,\Indic{j = i - 1} 
  \quad\text{and}\quad
  q_{ij}^\bullet \;=\;  \mu \,i\,\Indic{j = i - 1}
\]
so that $q_{ij} = q_{ij}^\circ + q_{ij}^\bullet$.
For convenience, we also use the notation $q_i = \sum_j q_{ij}$, i.e.\
in our case $q_i= (1 + \rho_i)\, i$.

\newtext{ First, note that $\bar{X}$ almost surely has a finite number of jumps
before hitting 0. Thus, for any $n\in \mathbb{N}$, to any trajectory $\gammabar$ starting from $1$ and
ending in $0$ after $n$ jumps, encoded as a sequence $\tilde{\gamma}$ of $i
\overset{\circ}{\to} j$ and $i \overset{\bullet}{\to} j$ transitions and a
vector $x = (x_1, \ldots, x_n)$ of corresponding holding times, we can
associate the probability density }
\[
  \Prob*{\normalsize}{\bar{X}\in d\gammabar} \;=
  \Big(\prod_{\;\,i\overset{\circ\;}{\to}j} \!q_{ij}^\circ\;\Big)
  \Big(\prod_{\;\,i\overset{\bullet\;}{\to}j} \!q_{ij}^\bullet\;\Big)
  \Big(\,\prod_{k = 1}^n e^{-q_{\tilde{\gamma}_k}x_{k}} \Big)\,dx ,
\]
where $\gammatilde_k$ denotes the position of $\gammabar$ before the $k$-th
transition, and $dx = dx_1 \cdots dx_n$.
Noting that $M(\gammabar) = \sum_{i\overset{\bullet\;}{\to}j} 1$ and that
$L(\gammabar) = \int_0^\infty \gammabar(t)\, dt = \sum_{k = 1}^n \gammatilde_k\, x_k$,
and recalling that $q_{ij}^\bullet = \mu \,i\,\Indic{j = i - 1}$, we get
\begin{align*}
  &\Expec[\mu]{f(\bar{X})\,s^M} \\
  &\quad=\; \sum_{\gammatilde} s^{M(\gammatilde)} \!\int f(\gammatilde, x)
    \Big(\prod_{\;\,i\overset{\circ\;}{\to}j} \!q_{ij}^\circ\;\Big)
    \Big(\prod_{\;\,i\overset{\bullet\;}{\to}j} \!\mu i\,\Big)
    \Big(\,\prod_{k = 1}^n e^{-q_{\gammatilde_k}x_{k}}\Big)\, dx\\
  &\quad=\; \sum_{\gammatilde} \int f(\gammatilde, x)
    \Big(\prod_{\;\,i\overset{\circ\;}{\to}j} \!q_{ij}^\circ\;\Big)
    \Big(\prod_{\;\,i\overset{\bullet\;}{\to}j} \!s\mu i\,\Big)
    \Big(\,\prod_{k = 1}^n e^{-(q_{\gammatilde_k} + (s-1)\mu \gammatilde_k )x_k} \Big)
    \Big(\,\prod_{k = 1}^n e^{(s-1)\mu \gammatilde_k x_k} \Big)\,dx\\
  &\quad=\; \Expec[s\mu]{f(\bar{X})\,e^{(s-1)\mu L}} \,,
\end{align*}
\newtext{
where the sums run on all sequences $\tilde{\gamma}$ that
end in 0 after a finite number of steps, and the integrals
are over $\R_+^n$, with $n$ is the number of jumps of $\tilde{\gamma}$.}
This concludes the proof.
\end{proof}

Finally, it will also be useful to describe the trajectory of $X$ as seen
from a uniform mutation time. For this, we first need to introduce some
notation for yet another type of changes of measures that will appear several
times in the paper.

\begin{notation} \label{notationBiasedTrajectory}
Let $A$ and $B$ be random variables defined on the same probability space
such that $B$ is almost surely nonnegative and $0 < \Expec{B} < \infty$.
We write $\mathscr{L}(A \biasedby B)$ for the distribution of $A$ biased by $B$,
that is, under the probability measure defined by
$\Prob{\,\cdot \biasedby B} = \Expec*{\normalsize}{\Indic{\cdot} B} \,/\, \Expec{B}$.
\end{notation}

With this notation, by ``the process $X$ as seen from a uniform mutation time''
we rigorously mean
\[
  \Xmut \;\sim\; \mathscr{L}\big((X_{U+t})_{-U\leq t < T-U} \biasedby[big] M\big)\,,
\]
where $U$ is chosen uniformly at random
among the atoms of the point process $\mathcal{M}$ giving the times of the
mutations associated to the trajectory of $X$; note that $U$ need not be defined
when $\mathcal{M}$ is empty because $\Prob{M = 0 \biasedby M} = 0$.
Equivalently, the distribution of $\Xmut$ is characterized by
\[
  \Expec{F(\Xmut)} \;=\;
  \frac{1}{\Expec{M}}\,\ExpecBrackets{\sum_{\,u\in \mathcal{M}}\!\!F\big((X_{u+t})_{-u\leq t < T-u}\big)}
\]
for any measurable bounded functional $F$.

It turns out that it is also possible to obtain $\Xmut$ by a simple construction.
For this, we need to introduce one last definition.

\begin{definition} \label{defBackToBackPasting}
Let $f\colon\COInterval{0, T_f}\to \R$ and $g\colon\COInterval{0, T_g}\to \R$
be two càdlàg functions.  The \emph{back-to-back pasting of $f\!$ to $g$} is the
càdlàg function $f\paste g\colon \COInterval{-T_f, T_g} \to \R$
defined by
\[
  f\paste g\colon t \mapsto
  \begin{cases}
    \lim_{s\downarrow t}f(-s) & \text{if } t < 0 \\
    g(t) & \text{if } t\geq 0\,.
  \end{cases}
  \qedhere
\]
\end{definition}

\begin{proposition} \label{propNuCircXfromMut}
Let $\nucirc$ be the probability distribution on the positive integers
defined by
\begin{equation} \label{eqDefNuCirc}
  \nucirc(n) \;=\; C \prod_{k=1}^{n}\frac{1}{\rho_k}\,,
\end{equation}
with $C$ the corresponding normalizing constant. Let $\TweakSpacingL{K \sim \nucirc}$ and,
conditional on~$K$, let $X'\!$ and $X''\!$ be two independent realizations
of the logistic branching process~$X$ started from $X'_0 = K$ and $X''_0 = K - 1$.
Then,
\[
  \Xmut \;\overset{d}{=}\; X'\paste X''.
\]
\end{proposition}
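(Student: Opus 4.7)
The plan is to check the distributional identity by computing $\Expec{F(\Xmut)}$ for every factorised test functional $F(\gamma) = F_1(\gamma|_{t<0})\,F_2(\gamma|_{t\geq 0})$ and matching it with $\Expec{F(X'\paste X'')}$; this suffices by a monotone class argument. The mutation point process~$\mathcal{M}$ has predictable compensator $\mu X(u^-)\,du$ (each $i\to i-1$ jump of $X$ is independently a mutation with probability $\mu/\rho_i$). Plugging this into the definition of $\Xmut$ and splitting by $i = X(u^-)$ at the focal mutation yields
\[
  \Expec{F(\Xmut)} \;=\; \frac{1}{\Expec{M}}\sum_{i\geq 1} \mu i \cdot A_i(F_1) \cdot B_i(F_2),
\]
where $A_i(F_1) = \Expec{\int_0^\infty F_1((X_{u+t})_{-u\leq t<0})\,\Indic{X(u^-)=i}\,du}$ and $B_i(F_2)$ is the conditional expectation of $F_2((X_{u+t})_{0\leq t<T-u})$ at a focal mutation with $X(u^-) = i$.

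The future is handled by the strong Markov property at the focal mutation: conditionally on $X(u^-) = i$ and $u$ being a mutation, $(X_{u+t})_{t\geq 0}$ is a fresh logistic branching process starting from $X(u) = i-1$, independent of the past. Thus $B_i(F_2) = \Expec[i-1]{F_2(X)}$, which matches the contribution of $X''$ (BP from $K-1$) in $X'\paste X''$ when $K = i$.

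For the past, I would use the reversibility of the chain on $\{1,2,\ldots\}$ with respect to $\pi_i = 1/(i\prod_{j=2}^i \rho_j)$ -- detailed balance $\pi_i\lambda_i = \pi_{i+1}\mu_{i+1}$ is immediate from $\lambda_i = i$ and $\mu_i = i\rho_i$. Applying detailed balance pathwise converts trajectories $1\to\dots\to i$ of duration $u$ (under $\Prob[1]{\cdot}$) into their time-reversals $i\to\dots\to 1$ (under $\Prob[i]{\cdot}$), at the cost of a factor $\pi_i/\pi_1$. To eliminate the $u$-cutoff, note that a BP from $i$ is necessarily absorbed via a $1\to 0$ jump, so its extinction time $T^{(i)}$ has density $\rho_1\Prob[i]{X(u^-) = 1}$; a direct Bayes computation further shows that conditioning the path on $\{X(u^-) = 1\}$ coincides with conditioning on $\{T^{(i)} = u\}$ as far as the restriction to $[0,u)$ is concerned. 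The integral then collapses, and using $\pi_i/(\pi_1\rho_1) = (1/i)\prod_{j=1}^i 1/\rho_j$ we obtain $A_i(F_1) = \frac{1}{i}\prod_{j=1}^i \frac{1}{\rho_j}\cdot\Expec[i]{F_1(Y')}$, where $Y'$ denotes the left part of $X'\paste X''$ with $X'$ a logistic BP from $i$.

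Assembling the pieces, the prefactor $\mu i \cdot (1/i)\prod_{j=1}^i 1/\rho_j \big/ \Expec{M}$ simplifies to $\nucirc(i)$ by the explicit formula $\Expec{M} = \mu\sum_{i\geq 1}\prod_{j=1}^i 1/\rho_j$ recalled in Section~\ref{secFourreTout}, so
\[
  \Expec{F(\Xmut)} \;=\; \sum_{i\geq 1}\nucirc(i)\,\Expec[i]{F_1(Y')}\,\Expec[i-1]{F_2(X)} \;=\; \Expec{F(X'\paste X'')},
\]
which is the claim. The main technical hurdle is the pathwise reversibility step: one must carefully reverse càdlàg trajectories (so that the reversed past reads $X'(|t|^-)$ rather than $X'(|t|)$), accommodate the downward unit jump in $\Xmut$ at $t = 0$ coming from the focal mutation being of the form $K \to K-1$, and justify the Bayesian identification between conditioning on $\{X(u^-) = 1\}$ and on $\{T^{(i)} = u\}$.
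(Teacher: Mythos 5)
Your proposal is correct, and it recovers the right weights, but it follows a different organizational route than the paper. The paper first augments the absorbed chain into a positive recurrent one ($X^\circ$, obtained by resurrecting $0 \to 1$ at an arbitrary rate, with stationary measure $\pi_i \propto i^{-1}\prod_{k\leq i}\rho_k^{-1}$), identifies $\Xmut$ with a single excursion of $X^\circ$ away from~$0$ shifted to a weighted transition time, and then invokes a general path-decomposition statement (Proposition~\ref{propPathDecompFromTransition}) about a recurrent chain viewed from a transition-weighted random time, which packages the detailed-balance calculation once and for all. You instead compute directly with the sub-Markov chain on $\{1,2,\ldots\}$: Mecke/compensation formula to replace the sum over $\mathcal{M}$ by an integral against the compensator $\mu X(u^-)\,du$, strong Markov at the focal mutation for the right part, and pathwise detailed balance on the restricted state space plus the extinction-time density $\rho_1\Prob[i]{X(u^-)=1}$ for the left part. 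These are the same underlying ingredients, and both derivations are sound; what you gain is a more self-contained, hands-on computation, while what the paper gains by factoring through the rebirth trick and the general proposition is (a) no need for the Bayes/density step identifying $\{X(u^-)=1\}$ with $\{T^{(i)}=u\}$, since the excursion bookkeeping is subsumed into the recurrent-chain framework, and (b) a reusable tool -- the same Propositions~\ref{propPathDecompFromUniformPoint} and \ref{propPathDecompFromTransition} are later applied verbatim to prove Proposition~\ref{propFocalSpinalNetworks}. One point you should make fully precise if you write this up: detailed balance extends to a pathwise identity on the restricted space $\{1,2,\ldots\}$ even though the chain is killed there, because the holding-time factors $e^{-q_{\gamma_k}x_k}$ (with $q_1 = 1+\rho_1$ including the killing rate) are symmetric under path reversal; this is exactly the computation carried out inside the paper's proof of Proposition~\ref{propPathDecompFromTransition}, so you are in effect re-deriving it.
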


The proof uses general results about the decomposition of trajectories of
Markov chains that are recalled in Appendix~\ref{secPathDecomposition}, and
therefore is deferred to the end of that appendix.

\subsection{Offspring distribution and extinction probability of~$\TweakSpacingL{\mathscr{T}}$} \label{sec}

In this section, we focus on the law of the random variable $M$
giving the number of mutations of a color (that is, the number of new
colors that it produces; also the offspring distribution of the
Galton--Watson tree $\mathscr{T}$).
Our main result is the following theorem, on which much of our study relies.

\begin{theorem} \label{thmM} Let $M$ be the offspring distribution
of~$\TweakSpacingLR{\mathscr{T}}$, and let $g$ be its probability generating
function. Then, letting
$\rho_k = \alpha + \mu + (k-1) \beta$,
\[
  \Expec{M} \,=\;
  \mu \sum_{j \geq 1} \prod_{k = 1}^j \frac{1}{\rho_k} 
\]
and
\[
  g(z) \;=\; \cfrac{\alpha + \mu z}{1 + \rho_1 -
  \cfrac{\alpha + \beta + \mu z}{1 + \rho_2 -
  \cfrac{\alpha + 2\beta + \mu z}{1 + \rho_3 - \ddots}}} \,, 
\]
which, using Gauss's notation for continued fractions, can be written
\[
  g(z) \;=\; - \kettenbruch_{k = 1}^{\infty}
               \frac{\mu - \rho_k - \mu z}{1 + \rho_k} \,.
\]
Moreover, $g$ is meromorphic on $\TweakSpacingL{\C}$. The radius of
convergence of its power series expansion around~0 is $R > 1$, and $g$ has a pole
in $R$.
\end{theorem}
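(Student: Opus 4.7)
The backbone of the argument is the family of generating functions $g_k(z) := \Expec[k]{z^M}$, where $\Expec[k]{\cdot}$ denotes expectation under the logistic branching process started from $X_0 = k$. Conditioning on the first jump from state $k$ -- which occurs at total rate $(1+\rho_k)k$, decomposed as branching ($k$), death ($k\alpha$), mutation ($k\mu$, contributing the factor $z$), and coalescence ($k(k-1)\beta$) -- yields the three-term recurrence
\[
  (1+\rho_k)\,g_k(z) \;=\; g_{k+1}(z) + \bigl(\alpha + (k-1)\beta + \mu z\bigr)\,g_{k-1}(z),
\]
with boundary condition $g_0 \equiv 1$. Differentiating at $z = 1$ (using $g_k(1) = 1$), the first moments $m_k := g_k'(1)$ satisfy $\Delta_{k+1} = \rho_k\,\Delta_k - \mu$ for $\Delta_k := m_k - m_{k-1}$, which solves to
\[
  \frac{\Delta_{k+1}}{\prod_{i=1}^k \rho_i} \;=\; m_1 - \mu\sum_{j=1}^k\prod_{i=1}^j \frac{1}{\rho_i}.
\]
Since the logistic branching process comes down from infinity with $\Expec[\infty]{L} < \infty$, the sequence $(m_k)$ is bounded; as $\prod_{i=1}^k \rho_i \to \infty$, letting $k \to \infty$ forces the announced formula $\Expec{M} = m_1 = \mu\sum_{j\geq 1}\prod_{i=1}^j \rho_i^{-1}$.

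For the continued fraction, set $h_k := g_k/g_{k-1}$; dividing the recurrence by $g_{k-1}$ gives $h_k = (\alpha + (k-1)\beta + \mu z) / ((1+\rho_k) - h_{k+1})$, and iterating from $k = 1$ produces the displayed formula. To justify convergence of the truncations, consider the sequence $g^{(n)}_k$ solving the same recurrence with $g^{(n)}_0 = 1$ and $g^{(n)}_{n+1} = 0$: a first-step analysis identifies $g^{(n)}_k(z) = \Expec[k]{z^M\,\Indic{T_0 < T_{n+1}}}$, and unrolling the recursion shows that the $n$-th convergent of the continued fraction equals $g^{(n)}_1(z)$. Because the trajectory almost surely visits only finitely many states, dominated convergence yields $g^{(n)}_1(z) \to g(z)$ on the closed unit disk.

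The delicate step is the meromorphic extension to $\C$. Writing the $n$-th convergent as $A_n(z)/B_n(z)$ for polynomial sequences $A_n, B_n$ satisfying the recurrence in $n$ with the standard continued-fraction initial conditions, I would appeal to the Poincar\'e--Perron theorem: for large $k$ one has $\rho_k \sim \beta k$, and the recurrence admits two linearly independent solutions, one \emph{minimal} (bounded) and one \emph{dominant} (growing like $\prod_i \rho_i$). After renormalising by a suitable factor, the entire functions $A_n$ and $B_n$ are shown to converge locally uniformly on $\C$ to entire limits $A_\infty, B_\infty$ with $B_\infty \not\equiv 0$; equivalently, by Pincherle's theorem the continued fraction converges to the ratio of a minimal solution to a non-minimal one. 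Either way $g = A_\infty/B_\infty$ is meromorphic on $\C$. This is the main obstacle: identifying the right renormalisation so that both limits are entire requires careful bookkeeping of the Perron asymptotics.

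The last claims follow easily. Applying Proposition~\ref{propMeasureChangeLogisticBP} with $f \equiv 1$ gives $\Expec[\mu]{s^M} = \Expec[s\mu]{e^{(s-1)\mu L}}$, finite for $s$ close to $1$ thanks to the finite exponential moments of $L$ (inherited from those of $T$ under $\Expec[\infty]{\cdot}$ recalled earlier), hence $R > 1$. Finally, $g$ has nonnegative Taylor coefficients, so Pringsheim's theorem shows $z = R$ is a singularity of $g$; combined with the meromorphy on $\C$, this singularity can only be a pole.
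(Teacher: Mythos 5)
Your derivation of the three-term recurrence $(1+\rho_k)g_k = g_{k+1} + (\alpha + (k-1)\beta + \mu z)g_{k-1}$ by first-step analysis is a legitimate alternative to the paper's excursion decomposition (the paper's $g_k$ is your ratio $h_k = g_k/g_{k-1}$, and both routes lead to the same continued fraction). However, there are two genuine gaps and one factual error.

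\textbf{The claim that $\Expec[\infty]{L}<\infty$ is false.} Coming down from infinity for the logistic branching process is governed by the quadratic death rate: the time spent at level $k$ during the descent is of order $1/(\beta k^2)$, so the contribution to $L=\int X_t\,dt$ is of order $1/(\beta k)$, which is not summable. One has $L=\infty$ $\P_\infty$-a.s., so the sequence $m_k = \mu\Expec[k]{L}$ is not bounded (it grows logarithmically). Your conclusion is nonetheless correct because the only thing you need is $\Delta_{k+1}/\prod_{i\le k}\rho_i\to 0$, and $\Delta_{k+1}=\Expec[k+1]{M}-\Expec[k]{M}$ is the expected number of mutations in the $(k+1)$-excursion, which tends to $0$, while $\prod\rho_i$ grows factorially; the boundedness of $(m_k)$ is the wrong reason. (The paper's route via Wald's formula gives $\Delta_{k+1}$ directly, sidestepping this.)

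\textbf{The meromorphy step is left open.} You correctly identify it as the main obstacle and sketch a Poincaré--Perron / Pincherle approach, but you do not carry out the ``careful bookkeeping'' needed to show that the renormalised numerator and denominator converge locally uniformly to entire functions. As written, this part of the argument is a plan, not a proof. The paper's route is shorter and self-contained: it stochastically dominates $M_k$ (the mutations in a $k$-excursion) by the hitting time $H_k$ of $0$ for a biased simple random walk with down-probability $\theta_k=\rho_k/(1+\rho_k)$. The generating function of $H_k$ is explicit with radius of convergence $(4\theta_k(1-\theta_k))^{-1/2}\to\infty$, so $g_{k_r}$ is analytic on the disc of radius $r$ for $k_r$ large, and backward induction through $g_{k-1}=(\alpha+(k-2)\beta+\mu z)/(1+\rho_{k-1}-g_k)$ gives meromorphy of $g_1$ on that disc, for every $r$.

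\textbf{The argument for $R>1$ is circular as stated.} You invoke finite exponential moments of $L$ to deduce finiteness of $\Expec[s\mu]{e^{(s-1)\mu L}}$ for $s>1$, attributing these moments to ``inheritance from those of $T$ under $\E_\infty$''. But $L\geq T$, so finiteness of exponential moments of $T$ gives no control on $L$; and in the paper's logical order, the exponential moments of $L$ (Lemma~\ref{lemLExpMomentUnderMBiais}) are themselves \emph{deduced} from the explicit form of $g$ established in this very theorem, via the change of measure of Proposition~\ref{propMeasureChangeLogisticBP}. Using them here would be circular. In the paper, $R>1$ is instead a byproduct of the same domination by $H_k$ used for meromorphy (giving a radius $>1$ directly for $g_{k_r}$ and hence, by the backward recursion, for $g_1$), and Pringsheim's theorem then places the dominant pole on $(1,\infty)$ exactly as you say.
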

 
\begin{proof}
Let $\tilde{X}$ be the embedded chain of $X$, that is,
$\tilde{X}_i = X_{\tau_i}$ where $\tau_0 = 0$ and
$\tau_{i + 1} = \inf\Set{t > \tau_i \suchthat X_t \neq X_{\tau_i}}$.
Note that, conditional on the trajectory of $\tilde{X}$,
each step from $k$ to $k-1$ corresponds to a mutation with probability
$p_k \defas \mu / \rho_k$, independently of everything else.
Let us refer to a trajectory of $\tilde{X}$ started from $k$ and killed
when it first hits~$k-1$ as a \emph{$k$-excursion of $\tilde{X}$}.
Every $k$-excursion of $\tilde{X}$ can be decomposed into $N_k$ independent
$(k+1)$-excursions, followed by a single step from $k$ to $k-1$. By the
strong Markov property, $N_k$ follows a geometric distribution on
$\Set{0, 1, \ldots}$ with parameter $\theta_k \defas \rho_k / (1 + \rho_k)$.
Therefore, letting $M_k$ have the distribution of the number of mutations
along a $k$-excursion of $\tilde{X}$, we have
\begin{equation} \label{eqProofM}
  M_k \;\;\overset{d}{=}\;\;
  \sum_{i = 1}^{N_k} M_{k + 1}^{(i)} \;+\; \mathrm{Ber}(p_k)\,,
\end{equation}
where $M_{k+1}^{(i)}$ are independent copies of $M_{k+1}$ that are also
independent of $N_k$, and $\mathrm{Ber}(p_k)$ is a Bernoulli variable that is
independent of everything else.

Applying Wald's formula to Equation~\eqref{eqProofM} gives
\[
  \Expec{M_k} \;=\; \frac{1}{\rho_k} \, \Expec{M_{k+1}} \;+\; \frac{\mu}{\rho_k}\,, 
\]
and solving this first-order linear recurrence yields the formula
for the expected value of $M \overset{d}{=} M_1$.

Let us now turn to the generating function of $M$ and let 
$g_k(z) \defas \Expec*{\normalsize}{z^{M_k}}$,
\newtext{defined for $\Abs{z}<R_k$, where $R_k=\sup\{r\geq 0 :  \Expec*{\normalsize}{r^{M_k}}<\infty\}$.
Note that since $M_k$ stochastically dominates $M_{k+1}$, the $R_k$ are nondecreasing.}
Then, for all $z$ such that
$\Abs{(1 - \theta_k)\,g_{k+1}(z)} < 1$ \newtext{-- note that this is true for all $\Abs{z}<1$ --} we have
\begin{align*}
  g_{k}(z) \;
  &=\; (1 - p_k + p_k z)\,
    \sum_{i \geq 0} g_{k+1}(z)^i \, \theta_k\,(1 - \theta_k)^i \\
  &=\; \frac{(1 - p_k + p_k z) \,\theta_k}{1 - (1 - \theta_k)\, g_{k+1}(z)}\\[1ex]
  &=\; \frac{\alpha + (k-1)\beta + \mu z}{1 + \rho_k - g_{k+1}(z)}\,.
\end{align*}
This gives the representation of $g = g_1$ as the continued fraction of the
theorem.

\newtext{We now show that $g$ extends to a meromorphic function on all of $\C$}.
Note that $M_k$ is stochastically
dominated by~$H_k$, the hitting time of~0 by the simple random walk
started from~1 that goes up with probability
$1 - \theta_k$ and down with probability $\theta_k$,
independently of its current position. A standard calculation
(see e.g.~\cite[Section~6.4]{BoNg96}) shows that the probability generating
function of $H_k$ is
\[
  h_k(z) \;=\;
  \frac{1 - \sqrt{1 - 4 \theta_k (1-\theta_k) z^2}}{2 (1 -\theta_k) z} \,, 
\]
whose power series expansion around zero has a radius of convergence equal to
$(4 \theta_k (1 - \theta_k))^{-1/2}$.
Moreover, $\smash{\Prob{H_k < \infty} = h_k(1^-) =
\frac{1 - \Abs{1 - 2\theta_k}}{2(1 - \theta_k)}}$
is equal to 1 \linebreak for all~$k$ large enough, and $\Prob{H_k < \infty}=1$ implies that
$\Expec*{\normalsize}{z^{H_k}} = h_k(z)$ inside the disk of convergence of
$\Expec*{\normalsize}{z^{H_k}}$.
Since $\Expec*{\normalsize}{z^{M_k}} \leq \Expec*{\normalsize}{z^{H_k}}$
for $z \geq 1$ and since 
$(4 \theta_k (1 - \theta_k))^{-1/2} \to +\infty$
as $k$ goes to infinity, this shows that for any $r > 0$ there exists
$k_r$ such that $g_{k_r}$ is analytic on
$\mathds{D}_r = \Set{z \suchthat \Abs{z} < r}$. It then follows by induction
that $g_{k_r - 1}, \ldots, g_1$ are meromorphic on $\mathds{D}_r$.

Finally,  recall that the \emph{dominant} singularities of a function that is
analytic at~0 are those singularities that are closest to the origin.
To see that the dominant pole of~$g$ is in $R > 1$, note that since the
power series representation of~$g$ around the origin has nonnegative
coefficients, Pringsheim's theorem (see e.g.\ \cite[Theorem~IV.6]{FlSe09})
ensures that it has a dominant singularity in $\OpenInterval{0, +\infty}$.
Since $g(1) = 1$ is finite and since all singularities of $g$ are poles,
this means that $g$ has no singularity in~$1$. Hence, $g$ has a dominant pole in
$R$ for some $R > 1$.
\end{proof}

One of the advantages of the expression of $g$ as a generalized continued
fraction is that this makes its numerical evaluation straightforward and
very efficient. Indeed, modified convergents of this continued fraction
provide us with upper and lower bounds on $g$, as the next proposition shows.
The rapid convergence of these bounds is illustrated in
Figure~\ref{figConvergentsG}.

\begin{proposition} \label{propBoundsG}
For all $z \in \ClosedInterval{0, 1}$ and all $n \geq 1$,
\[
  \scalebox{0.8}{$%
  \cfrac{\alpha + \mu z}{1 + \rho_1 -
  \cfrac{\alpha + \beta + \mu z}{1 + \rho_2 -
  \cfrac{\alpha + 2\beta + \mu z}{\ddots
  \raisebox{-2ex}{$\;-\cfrac{\alpha + (n-1)\beta + \mu z}{
    1 + \rho_n - \;\bar{\!g}_n(z)}$}}}}
  $}
  \;\leq\; g(z) \,\leq\,
  \scalebox{0.8}{$%
  \cfrac{\alpha + \mu z}{1 + \rho_1 -
  \cfrac{\alpha + \beta + \mu z}{1 + \rho_2 -
  \cfrac{\alpha + 2\beta + \mu z}{\ddots
  \raisebox{-2ex}{$\;-\cfrac{\alpha + (n-1)\beta + \mu z}{\rho_n}$}}}} 
  $}
\]
where
\[
  \bar{\!g}_n(z) \;=\; \frac{1}{2}
  \mleft(1 + \rho_n - \sqrt{(1 - \rho_n)^2 - 4\mu(z - 1)}\mright) \,.
\]
Letting $\altoverline{R}_n \geq 1$ denote the radius of convergence of
the power series expansion around~0 of the left-hand side, for $n$ large enough
the reversed inequalities hold for
$z \in \COInterval[\normalsize]{1, \altoverline{R}_n}$.
Moreover, the difference between the right-hand side and
the left-hand side is $O(n^c \beta^{-n} / n!)$, where
$c = 1 - \frac{\alpha + \mu}{\beta}$, 
uniformly in $z \in \ClosedInterval{0, 1}$.
\end{proposition}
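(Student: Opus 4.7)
The plan is to propagate bounds through the Möbius recursion $g_k(z) = \phi_k(g_{k+1}(z))$ derived in the proof of Theorem~\ref{thmM}, where
\[
  \phi_k(x) \;=\; \frac{\alpha + (k-1)\beta + \mu z}{1 + \rho_k - x}
\]
is strictly increasing in $x$ for $x < 1 + \rho_k$. The upper bound of the proposition corresponds to setting $U_n(z) = (\alpha + (n-1)\beta + \mu z)/\rho_n = \phi_n(1)$ and then iterating $U_{k-1} = \phi_{k-1}(U_k)$ down to $k = 1$; the lower bound corresponds to setting $L_n(z) = \bar{g}_n(z)$, which inspection of its explicit formula identifies as the smallest fixed point of $\phi_n$ in $[0,1]$, and iterating in the same way. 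Since $\phi_k$ is increasing in $x$, any base-level comparison $L_n \leq g_n \leq U_n$ propagates up to $L_1 \leq g \leq U_1$; moreover $\phi_k(1) = 1$ ensures that both iterates remain in $[0,1]$ for $z \in [0,1]$ throughout the propagation, which we will need for the error estimate.

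At the base level, the upper bound $g_n \leq U_n$ follows immediately from $g_{n+1}(1) = 1$ and hence $g_{n+1}(z) \leq 1$ for $z \in [0,1]$. For the lower bound, $\bar{g}_n$ is the pgf of the number of mutations in a ``frozen'' logistic branching process in which the death rate per individual is held constant equal to $\rho_n$ at every state $k \geq n$. We would prove $\bar{g}_n \leq g_n$ on $[0,1]$ either by coupling (the frozen process has uniformly smaller per-capita death rate than the real one for states $k \geq n$, hence is stochastically larger and produces at least as many mutations over an $n$-excursion), or purely analytically: the elementary inequality $\phi_k(x) \geq \phi_n(x)$ for $x \in [0,1]$ and $k \geq n$ follows from a short computation using $\rho_k - (\alpha + (k-1)\beta) = \mu$, and then an induction on $m$ combined with the monotonicity of each $\phi_k$ gives $\phi_n \circ \phi_{n+1} \circ \cdots \circ \phi_{n+m-1}(0) \geq \phi_n^m(0)$, whose limit as $m \to \infty$ is $g_n \geq \bar{g}_n$. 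For $z \geq 1$, $z^M$ is increasing in $M$ so both comparisons reverse: $g_{n+1}(z) \geq 1$ and $\bar{g}_n(z) \geq g_n(z)$. Provided $\rho_n \geq 1$ (whence $\bar{g}_n(1) = 1$; this is the source of the ``for $n$ large enough'' condition), monotonicity propagates the reversed inequalities up to level~$1$ for as long as none of the iterated denominators vanishes, and $\bar{R}_n$ is by construction the largest $z > 1$ for which this holds.

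For the convergence rate, we set $\Delta_k = U_k - L_k \geq 0$. The mean value theorem applied to $\phi_{k-1}$, together with the fact that $\phi_{k-1}'(x) = (\alpha + (k-2)\beta + \mu z)/(1 + \rho_{k-1} - x)^2$ is increasing in $x$ and that $L_k, U_k \in [0,1]$ for $z \in [0,1]$, yields
\[
  \Delta_{k-1} \;\leq\; \phi_{k-1}'(1)\, \Delta_k
  \;=\; \frac{\alpha + (k-2)\beta + \mu z}{\rho_{k-1}^2}\, \Delta_k
  \;\leq\; \frac{1}{\rho_{k-1}}\, \Delta_k,
\]
where the last inequality uses $\alpha + (k-2)\beta + \mu z \leq \alpha + (k-2)\beta + \mu = \rho_{k-1}$ on $[0,1]$. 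Iterating, $\Delta_1 \leq \Delta_n / \prod_{k=1}^{n-1}\rho_k$. The explicit formula for $\bar{g}_n$ and the elementary bound $\sqrt{1 + y} - 1 \leq y/2$ give $1 - \bar{g}_n(z) \leq \mu(1-z)/(\rho_n - 1)$ once $\rho_n > 1$, whence $\Delta_n \leq \mu/(\rho_n(\rho_n - 1)) = O(1/n^2)$ uniformly in $z \in [0,1]$. Finally, writing $\rho_k = \beta\bigl(k - 1 + (\alpha+\mu)/\beta\bigr)$ we recognize $\prod_{k=1}^{n-1}\rho_k = \beta^{n-1}\,\Gamma(n + (\alpha+\mu)/\beta - 1)/\Gamma((\alpha+\mu)/\beta)$, and Stirling yields $\Delta_1 = O(n^c \beta^{-n}/n!)$ with $c = 1 - (\alpha+\mu)/\beta$.

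The main obstacle is the base-level lower bound $\bar{g}_n \leq g_n$ (and its reversal for $z \geq 1$), which requires linking the algebraic formula for $\bar{g}_n$ to a natural probabilistic object, the ``frozen'' process; everything downstream -- propagation via monotonicity of the Möbius maps, the derivative estimate, and the Stirling bookkeeping -- is mechanical.
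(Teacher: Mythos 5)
Your route is correct and is precisely the ``shorter analytic proof'' the paper alludes to in its opening sentence of the proof (``We give a probabilistic proof. Although it is possible to give a shorter analytic proof, we think that the probabilistic one is more instructive.''). The paper instead realizes the two bounds as genuine probability generating functions of auxiliary random variables -- $\altoverlinesup{M}{n}$, the mutation count of a modified process whose coalescence rate is capped at level $n$, and $\altunderlinesup{M}{n}$, the count of mutations from transitions $k\to k-1$ with $k\le n$ in the original process -- and then invokes the stochastic dominations $\altunderlinesup{M}{n}\le M\le\altoverlinesup{M}{n}$. The two approaches buy different things: the probabilistic one gives the bounds a model interpretation and makes the $z\ge 1$ reversal immediate (stochastic domination reverses the pgf inequality once $z\ge1$), at the cost of having to handle the possibility that $\altoverlinesup{M}{n}=\infty$ with positive probability when $\rho_n<1$; your analytic one trades that care for the monotonicity observation $\phi_k\ge\phi_n$ on $[0,1]$ for $k\ge n$, which you verify correctly, and avoids any measure-theoretic bookkeeping. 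Your error estimate is also organized slightly differently -- the paper contracts over $n$ levels and crudely bounds $|\bar g_n-1|\le 1$, whereas you contract over $n-1$ levels and estimate $\Delta_n=O(1/n^2)$ via $\sqrt{1+y}-1\le y/2$, which in fact gives an extra factor of $n^{-1}$ beyond what the statement claims -- but both land on the advertised $O(n^c\beta^{-n}/n!)$.

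One place you should tighten: for $z\ge 1$ you write that ``both comparisons reverse'', but the \emph{analytic} base-level argument you gave for $z\in[0,1]$ does not simply flip. The induction $\phi_n\circ\cdots\circ\phi_{n+m-1}(0)\ge\phi_n^m(0)$ combined with $g_n\ge\phi_n\circ\cdots\circ\phi_{n+m-1}(0)$ yields a \emph{lower} bound on $g_n$ in both regimes; to show $g_n\le\bar g_n$ for $z>1$ purely analytically you would need a different ingredient (e.g.\ the observation $g_n\le\phi_n(g_n)$, which together with convexity of $\phi_n$ and continuity in $z$ traps $g_n$ below the smaller fixed point). The clean way to get the reversed inequality is exactly the coupling you mention parenthetically -- $M_n^{\mathrm{froz}}$ stochastically dominates $M_n$, hence $\Expec{z^{M_n^{\mathrm{froz}}}}\ge\Expec{z^{M_n}}$ for $z\ge1$ -- but then you should commit to it rather than leave the analytic and probabilistic variants as interchangeable options, since only the latter actually covers $z\ge1$. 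This is also where the ``for $n$ large enough'' caveat does its work, as you correctly note: when $\rho_n<1$ the frozen process fails to absorb almost surely, $M_n^{\mathrm{froz}}=\infty$ occurs with positive probability, and the explicit formula for $\bar g_n$ (taking the smaller root) is no longer the pgf that the coupling would produce.
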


\begin{figure}[t]
  \centering
  \includegraphics[width=0.95\linewidth]{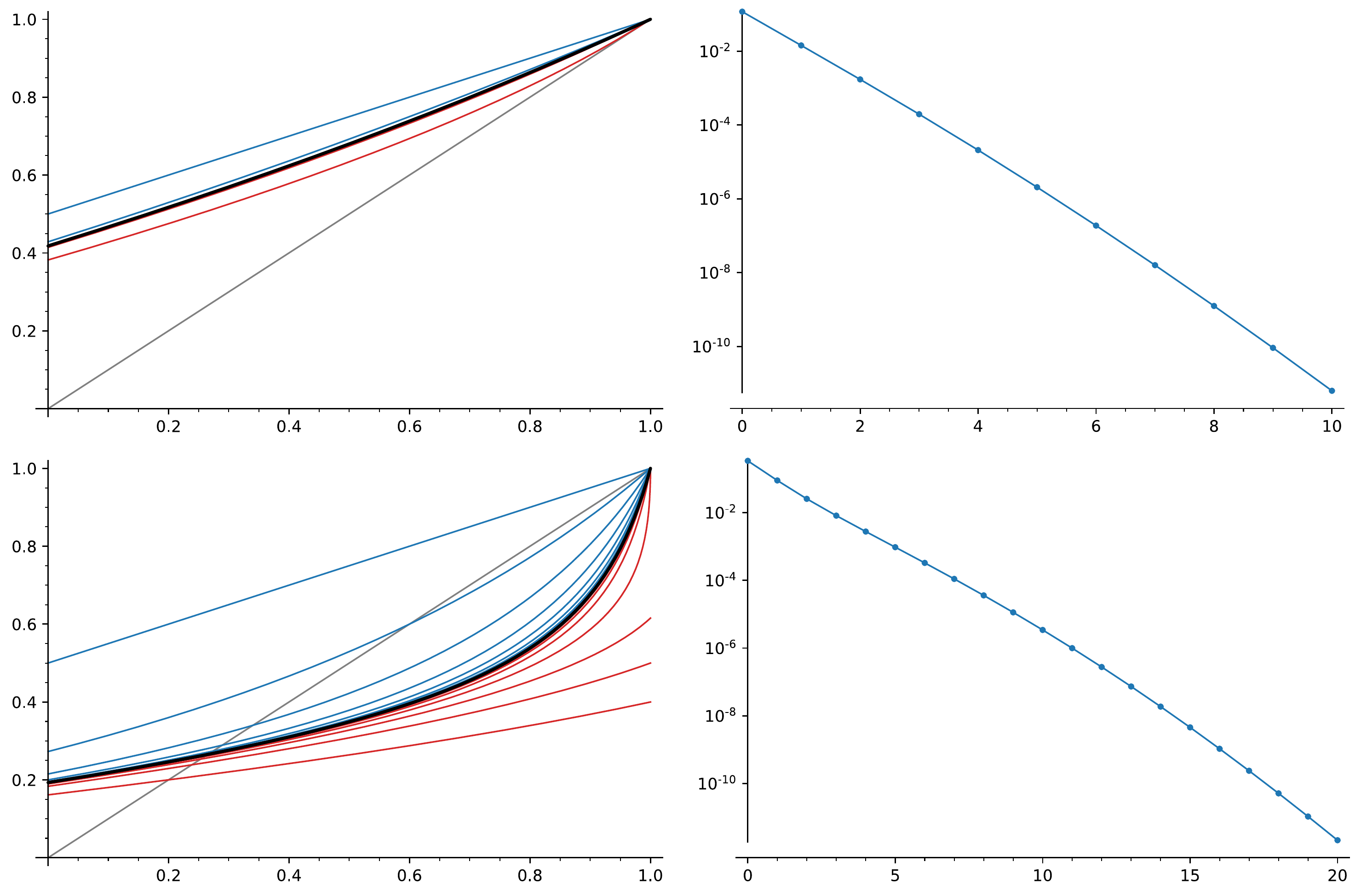}
  \caption{Left, the generating function of $M$, in black, along with the
  modified convergents of Proposition~\ref{propBoundsG} giving upper (blue)
  and lower (red) bounds; right, supremum on $\ClosedInterval{0, 1}$ of the
  difference between the upper and the lower bound, as a function of~$n$.
  Top, subcritical regime, with $(\alpha, \beta, \mu) = (1, 1, 1)$;
  bottom, supercritical regime, with $(\alpha, \beta, \mu) = (0.2, 0.2, 0.2)$.}
  \label{figConvergentsG}
\end{figure}

\begin{proof}
We give a probabilistic proof. Although it is possible to give a shorter
analytic proof, we think that the probabilistic one is more
instructive.

Let $\altoverlinesup{M}{n}$ denote the number of mutations associated to
a modified version~$\altoverlinesup{X}{n}$ of the process $X$,
where coalescences happen at rate
$n(n-1) \beta$ instead of $k(k-1) \beta$ whenever
$\altoverlinesup{X}{n} = k \geq n$.
Let $\altunderlinesup{M}{n}$ denote the number of mutations 
that correspond to transitions from $k$ to $k-1$ with $k \leq n$ in the
original process~$X$. For all $n \geq 1$, we have the stochastic dominations
\begin{equation} \label{eqStocDomMProof}
  \altunderlinesup{M}{n}
  \;\overset{d}{\leq}\; M \;\overset{d}{\leq}\;
  \altoverlinesup{M}{n} .
\end{equation}
Let $\altoverline{G}_n(z)$ and $\altunderline{G}_{n}(z)$ denote the
left- and right-hand sides of the inequality of the proposition, respectively.
The same reasoning as in the proof of Theorem~\ref{thmM} shows that
$\altoverline{G}_n$ and $\altunderline{G}_{n}$ are, respectively,
the generating functions of $\altoverlinesup{M}{n}$ and of~$\altunderlinesup{M}{n}$.
Note however that, in the case of $\altoverlinesup{M}{n}\!$, for small values
of $n$ there can be a positive probability that $\altoverlinesup{X}{n}$
never hits 0 -- in which case it is not possible to decompose its trajectory
into finite excursions. Nevertheless, letting $A_{k}$ be the event
that $\altoverlinesup{X}{n}$ started from $k$ never hits~$k-1$ and
$\altoverlinesup{N}{n}_{\!\smash{k}}$ a geometric variable with parameter
$\rho_{k \wedge n} / (1 + \rho_{k \wedge n})$, we have
\[
  \Prob{A_{k}} \;=\;
  \Prob{\!\!\bigcup_{i = 1}^{\;\;\altoverlinesup{N}{n}_{\!\smash{k}}}
  \!\!A_{k+1}^{(i)}} ,
\]  
where $A_{k+1}^{(i)}$ are independent realizations of $A_{k+1}$ that
are also independent of~$\altoverlinesup{N}{n}_{\!\smash{k}}$.
Since, up to a negligible event, $A_k$ and
$\Set*{\altoverlinesup{M}{n}_{\!\smash{k}} = \infty}$ are equal, 
this means that Equation~\eqref{eqProofM} holds for
$\altoverlinesup{M}{n}_{\!\smash{k}}$, mutatis mutandis,
even when $\Prob*{\normalsize}{\altoverlinesup{M}{n}_k = \infty} > 0$. Finally,
the expression of $\,\bar{\!g}_n$ is obtained by solving
\[
  \,\bar{\!g}_n(z) \;=\;
  \frac{\alpha + (k-1)\beta + \mu z}{1 + \rho_k - \;\bar{\!g}_n(z)}\,, 
\]
since, by construction, $\altoverlinesup{M}{n}_{\!\smash{n+1}}
\overset{d}{=} \altoverlinesup{M}{n}_{\!\smash{n}}$.

Being generating functions, $\altoverline{G}_n$ and $\altunderline{G}_{n}$ are
analytic at~0 with radius of convergence at least~1, and we have
$\Expec*{\normalsize}{z^{\altoverlinesup{M}{n}}} = \altoverline{G}_n(z)$ and
$\Expec*{\normalsize}{z^{\altunderlinesup{M}{n}}} = \altunderline{G}_{n}(z)$
for all $z \in \COInterval{0, 1}$.
Combining this with~\eqref{eqStocDomMProof} and taking the limit
$z \to 1^-$ proves the inequality of the
proposition for all $z \in \ClosedInterval{0, 1}$.

For $z > 1$, since $\altunderlinesup{M}{n}$ is stochastically dominated by $M$
and since $M$ is almost surely finite, for all $n$ we have
$\altunderline{G}_{n}(z) = \Expec*{\normalsize}{z^{\altunderlinesup{M}{n}}}
\leq \Expec*{\normalsize}{z^M} = g(z)$ for all ${z \in \COInterval{1, R}}$, where
$R$ is the radius of convergence of $g$ around~0.
Similarly, for $n$ large enough
$\Prob*{\normalsize}{\altoverlinesup{M}{n} < +\infty} = 1$ and thus
$g(z) \leq \altoverline{G}_{n}(z)$ for all
$z\in \COInterval[\normalsize]{1, \altoverline{R}_n}$, where $\altoverline{R}_n$
is the radius of convergence of $\altoverline{G}_{n}$ around~0. Note however
that for small $n$ we can have $\altoverline{G}_n(1) =
\Prob*{\normalsize}{\altoverlinesup{M}{n} < +\infty} < 1$, and thus
$\altoverline{G}_n(z) < g(z)$ for
$z\in \COInterval[\normalsize]{1, \altoverline{R}_n}$.

Finally, to see that $\sup_{z \in \ClosedInterval{0, 1}}
\Abs[normalsize]{\altunderline{G}_{n}(z) - \altoverline{G}_{n}(z)} =
O(n^c \beta^{-n}/n!)$,
note that
\[
  \frac{\alpha + (k-1)\beta + \mu z}{1 + \rho_k - A}
  - \frac{\alpha + (k-1)\beta + \mu z}{1 + \rho_k - B} 
  \;=\;
  \frac{(\alpha + (k-1) \beta + \mu z)(A-B)}{(1 + \rho_k - A)(1 + \rho_k - B)}\,, 
\]
so that for all $z, A, B \in \ClosedInterval{0, 1}$,
\[
  \Abs{\frac{\alpha + (k-1)\beta + \mu z}{1 + \rho_k - A}
  - \frac{\alpha + (k-1)\beta + \mu z}{1 + \rho_k - B} }
  \;\leq\; \frac{1}{\rho_k}\, \Abs{A - B}\,.
\]
Since $\Abs{\;\bar{\!g}_n(z) - 1} \leq 1$, an immediate induction gives
\[
  \sup_{z \in \ClosedInterval{0, 1}}
  \,\Abs{\,\altunderline{G}_{n}(z) - \altoverline{G}_{n}(z)} \;\leq\;
  \prod_{k = 1}^n \frac{1}{\rho_k} \;\sim\;
  \Gamma(\tfrac{\alpha + \mu}{\beta})\; n^{1 - \frac{\alpha + \mu}{\beta}} \, 
  \beta^{-n} \,/\, n! \,, 
\]
finishing the proof.
\end{proof}

Besides numerical evaluation, the bounds of Proposition~\ref{propBoundsG} can
be used to obtain rigorous bounds on the probability of extinction
of the model. For instance, taking $n = 2$ for the left-hand side, 
$n = 3$ for the right-hand side, and finding the corresponding fixed points,
we get the simple bounds
\[
  \scalebox{0.9}{$\displaystyle%
  \frac{\alpha}{2\mu}
  \mleft(\beta + \mu - 1 +\sqrt{(\beta + \mu - 1)^2 + 4\mu}\mright)
  $}
  \;\leq\; p_{\mathrm{ext}} \;\leq\;
  \scalebox{0.9}{$\displaystyle%
  \frac{\alpha \,((\alpha + \beta + \mu) (\alpha + 2\beta + \mu) + \mu)}{
    \mu \, (1 + 2 \alpha + 2 \beta + \mu)}\,.
  $}
\]
In fact, it is possible to get one such upper bound up to $n = 9$.
However, the resulting expression, although very sharp, is too complex to be of
any practical use -- so we do not reproduce it here.

Let us now point out two immediate consequences of Theorem~\ref{thmM} that
will be useful in the rest of this document.

\begin{corollary}~ \label{corExpTiltM}
\begin{mathlist}
  \item $M$ has finite exponential moments:
    \[
      \exists \epsilon > 0 \st\quad \Expec{e^{\epsilon M}} < +\infty.
    \]
  \item There is an exponential tilt of $\TweakSpacingL{M}$ with mean 1:
    \[
      \exists \zeta > 0 \st \quad
      \frac{\Expec*{\normalsize}{M \zeta^M}}{\Expec{\zeta^M}} = 1.
    \]
\end{mathlist}
\end{corollary}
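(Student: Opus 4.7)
The plan is to derive both claims from Theorem~\ref{thmM}, whose crucial inputs are that the generating function $g$ of $M$ has radius of convergence $R > 1$ around~$0$, and that $g$ is meromorphic on $\C$ with a pole at~$R$.

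For (i), the condition $R > 1$ immediately gives $\Expec{r^M} = g(r) < \infty$ for any $r \in (1, R)$; setting $\epsilon = \log r > 0$ then yields $\Expec{e^{\epsilon M}} = g(r) < \infty$.

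For (ii), I would introduce the function
\[
  \phi(z) \;\defas\; \frac{z\,g'(z)}{g(z)} \;=\; \frac{\Expec{M z^M}}{\Expec{z^M}},
\]
defined on $(0, R)$, which is precisely the mean of $M$ under the exponential tilt by $z^M$; the goal is then to exhibit $\zeta$ with $\phi(\zeta) = 1$. Since $g$ has nonnegative power-series coefficients and $g(0) = \Prob{M = 0} > 0$ (the first event from $X_0 = 1$ is a death with probability $\alpha/(1 + \alpha + \mu)$), $g$ is strictly positive on $[0, R)$, and $\phi$ is continuous on $(0, R)$. I would then apply the intermediate value theorem, distinguishing three cases according to the value of $\phi(1) = \Expec{M}$. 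If $\Expec{M} = 1$, take $\zeta = 1$. If $\Expec{M} > 1$, observe that $\phi(z) \to 0$ as $z \to 0^+$ (since $g(0) > 0$ and $z g'(z) \to 0$), and apply IVT on $(0, 1]$. If $\Expec{M} < 1$, use that $R$ is a pole of $g$ of some order $m \geq 1$ to write $g(z) \sim c(R - z)^{-m}$ near $R$, so that $\phi(z) \sim m z/(R - z) \to \infty$ as $z \to R^-$, and apply IVT on $[1, R)$.

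The only step requiring any real thought is the behaviour of $\phi$ near the pole $R$ in the subcritical case, but this is just the standard asymptotic expansion of a meromorphic function around an isolated pole, so I do not anticipate a genuine obstacle.
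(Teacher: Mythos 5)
Your proposal is correct, and part~(i) is exactly what the paper does. For part~(ii), you take a mildly different route. The paper proves that $\phi(s) = \Expec{Ms^M}/\Expec{s^M} \to +\infty$ as $s\uparrow R$ by a bare-hands estimate that only uses $g(s)\to +\infty$ as $s\uparrow R$: for any $a\geq 0$ one writes $\phi(s)-a \geq \Expec{(M-a)s^M\Indic{M<a}}/\Expec{s^M}$, whose numerator is bounded (by $aR^a$) and whose denominator blows up, so $\liminf_{s\uparrow R}\phi(s)\geq a$. You instead invoke the full pole structure at $R$ (writing $g(z)\sim c(R-z)^{-m}$ and computing $\phi(z)\sim mz/(R-z)$). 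Both are valid given Theorem~\ref{thmM}, but the paper's estimate is more elementary and would apply even if one only knew that $g$ blows up at $R$, without meromorphy. A small presentational point: your three-way case split on the value of $\Expec{M}$ is unnecessary, since once you have established $\phi(0^+)=0$ (which you correctly justify via $\Prob{M=0}>0$) and $\phi(R^-)=+\infty$, the intermediate value theorem gives a $\zeta\in(0,R)$ with $\phi(\zeta)=1$ in a single stroke, regardless of where $\phi(1)=\Expec{M}$ sits.
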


\begin{proof}
(i) is merely saying that the radius of convergence of $g$ is greater than~1;
(ii)~is a classic consequence of the fact that $g(s) \to +\infty$ as
$s \uparrow R$, see e.g.\ point~(iv) of Lemma~3.1 in \cite{Jan12}.
For the sake of completeness, we recall the proof here:
for any $a \geq 0$ and any $s \in \COInterval{0, R}$,
\[
  \frac{\Expec*{\normalsize}{M s^M}}{\Expec{s^M}} - a \;\geq\;
  \frac{\Expec{(M - a) s^M \Indic{M < a}}}{\Expec{s^M}} \,.
\]
Since $\Abs[\normalsize]{\Expec*{\normalsize}{(M - a) s^M \Indic{M < a}}} < a R^{\,a}$
and $\Expec*{\normalsize}{s^M} \to +\infty$ as $s \uparrow R$, the
right-hand side of this inequality goes to 0 as $s \uparrow R$. Therefore,
$\lim_{s\uparrow R}\Expec*{\normalsize}{M s^M} / {\Expec*{\normalsize}{s^M}} \geq a$
for all $a\geq 0$, i.e.\
$\Expec*{\normalsize}{M s^M} / {\Expec*{\normalsize}{s^M}} \to +\infty$
as $s \uparrow R$. The existence of $\zeta \in \OpenInterval{0, R}$
such that $\Expec*{\normalsize}{M \zeta^M} / {\Expec*{\normalsize}{\zeta^M}} = 1$
follows by continuity.
\end{proof}

The main consequence of Corollary~\ref{corExpTiltM} is that, for all $\alpha,
\beta, \mu > 0$, when conditioned to have $n$ vertices
$\mathscr{T}$ is distributed as a critical Galton--Watson tree conditioned to
have $n$ vertices. We will come back to this in Section~\ref{secCRT}.

Finally, we close this section with a brief remark about $\mathcal{M}$, the point
process of mutation times.
We state it as a proposition for ease of reference, but it
is not specific to our setting and follows readily from the infinitesimal
definition of a continuous-time Markov chain -- so we omit the proof.

\begin{proposition} \label{propIntensityM}
Let $\mathcal{M}$ be the point process on $\R_+$ giving the birth
times of the children of the first color (that is, every atom
$t \in \mathcal{M}$ corresponds to a mutation of a lineage of the first color).
The intensity measure of $\mathcal{M}$ is $\mu\,\Expec{X_t}\,dt$ -- that is,
for any Borel set $A \subset \R_+$, 
\[
  \Expec*{\big}{\#(\mathcal{M}\cap A)}  
  \;=\;
  \mu \int_A \!\Expec{X_t} \, dt \,.
\]
In particular, $\Expec{M} = \mu\, \Expec{\int_0^{\infty} \! X_t \, dt}$.
\end{proposition}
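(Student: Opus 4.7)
The plan is to identify $\mu\,\Expec{X_t}\,dt$ as the compensator of the counting process associated to $\mathcal{M}$ and then integrate. Working with the enriched chain $\bar{X}$ of Proposition~\ref{propMeasureChangeLogisticBP}, mutation jumps from state~$i$ occur at rate $q_{ij}^{\bullet} = \mu\, i\, \Indic{j = i-1}$, so the total rate of mutation jumps out of state~$i$ is $\mu\, i$. Since $\bar{X}$ is a pure-jump Markov chain, Dynkin's formula applied to the counting process of mutation transitions yields that
\[
  N_t \;\defas\; \#\bigl(\mathcal{M}\cap[0,t]\bigr) \;-\; \mu \int_0^t X_s\,ds
\]
is a local martingale started at $0$.

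Next, I would upgrade $N$ to a genuine integrable martingale on each finite interval. Stochastically dominating $X$ by a rate-$1$ Yule process (obtained by suppressing deaths, coalescences and mutations) gives $\Expec{X_t}\leq e^t$, hence $\Expec{\#(\mathcal{M}\cap[0,t])}<\infty$ on every bounded interval. Taking expectations of $N_t = N_0 = 0$ and applying Tonelli to the drift term then yields
\[
  \Expec{\#(\mathcal{M}\cap[0,t])} \;=\; \mu \int_0^t \Expec{X_s}\,ds
\]
for every $t\geq 0$. To pass from intervals to an arbitrary Borel set $A\subset\R_+$, I would note that both $A\mapsto\Expec{\#(\mathcal{M}\cap A)}$ and $A\mapsto\mu\int_A\Expec{X_s}\,ds$ are $\sigma$-finite Borel measures on $\R_+$ that coincide on the $\pi$-system of intervals $[0,t]$; by the monotone class theorem they therefore coincide on the whole Borel $\sigma$-algebra, which is the stated intensity formula.

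Finally, the identity $\Expec{M} = \mu\,\Expec{\int_0^{\infty}\! X_t\,dt}$ follows by taking $A = \R_+$ and applying Tonelli once more to commute the nonnegative integrand with the expectation. No genuine obstacle stands in the way of this argument: the only point worth attention is the integrability required to turn the local martingale $N$ into a true martingale, which is handled by the Yule domination together with the fact, recalled above, that the logistic branching process comes down from infinity and has an almost surely finite extinction time \cite{Lam05}.
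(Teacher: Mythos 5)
Your proposal is correct and follows exactly the route the paper alludes to when it declares the result ``follows readily from the infinitesimal definition of a continuous-time Markov chain'' and omits the proof. The compensator/Dynkin argument, with the Yule domination $\Expec{X_t}\leq e^t$ supplying integrability to upgrade the local martingale to a true one, and the $\pi$-system extension from intervals to Borel sets, is a clean and complete write-up of that omitted argument; the only superfluous remark is the appeal at the end to coming down from infinity and a.s.\ finite extinction time, which is not needed since the Yule bound and Tonelli already handle everything (including the case $A=\R_+$).
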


\subsection{Growth rate of the number of lineages} \label{secGrowthRate}

Let us start by focusing on the number the colors. We will turn to
the number of lineages at the end of the section. Let $Z_t$ denote
the number of colors alive at time~$t$. The process $Z = (Z_t : t \geq 0)$ is a
Crump--Mode--Jagers process, or CMJ for short, where individuals give
birth according to a point process distributed as~$\mathcal{M}$, the point
process of mutations of the first color; and die after a time distributed
as $T$, the extinction time of the logistic branching process started from~1.

The next proposition is an application of standard results from the theory
of CMJ processes \cite{CrMo68,CrMo69,Jag75}. Essentially, CMJ processes
grow\,/\,decrease exponentially with a growth rate
known as their \emph{Malthusian parameter}. Proposition~\ref{propGrowthRateCMJ}
recalls the precise meaning of this ``exponential growth''
and gives the usual, generic characterization of the growth rate.
Another characterization -- one that is specific to our setting and makes
it possible to compute the growth rate numerically --
will be given in Proposition~\ref{propCharGrowthRate}.

\begin{proposition} \label{propGrowthRateCMJ}
Let $\lambda$ be the unique solution of
\[
  \Expec{\sum_{t \in \mathcal{M}}\! e^{-\lambda t}} \;=\; 1
  \quad\text{or, equivalently,}\quad
\mu\,\Expec{\int_0^\infty\! X_t\, e^{-\lambda t} \, dt} = 1\,.
\]
Then, $\TweakSpacingL{\lambda}$ has the same sign as ${\TweakSpacingL{\Expec{M}-1}}$.
Moreover,
\begin{mathlist}
\item If $\TweakSpacingL{\Expec{M} > 1}$, \newtext{then the process $Z$ of the number of colors satisfies}
  \[
    e^{-\lambda t}\, Z_t \;\;\tendsto[\;L^2, \text{ a.s.}\;]{t\to\infty} \;\; W, 
  \]
  where $\TweakSpacingL{W}$ is a random variable with $\TweakSpacingL{\Expec{W}} = 1$
  that is almost surely positive on non-extinction, i.e.\ on the event
  \{$Z_t > 0$ for all $\TweakSpacingL{t}$\},
  and where the convergence holds both almost surely and in mean square.
\item If $\Expec{M} \leq 1$, then $\Expec{Z_t} \sim Ce^{\lambda t}$
as $t\to\infty$ for some constant $C > 0$.
\end{mathlist}
\end{proposition}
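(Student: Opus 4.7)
The plan is to recognize $Z$ as a Crump--Mode--Jagers (CMJ) branching process whose individuals (colors) reproduce according to the point process $\mathcal{M}$ of Proposition~\ref{propIntensityM} and live for an independent copy of the extinction time $T$ of the logistic branching process, and then to apply classical convergence results on CMJ processes \cite{CrMo68,CrMo69,Jag75}. What needs to be done is to (a) establish the existence and sign of the Malthusian parameter $\lambda$ and (b) verify the integrability and moment conditions those theorems require in our setting.

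Setting $\phi(\lambda) := \Expec{\sum_{t\in\mathcal{M}} e^{-\lambda t}}$, Campbell's formula combined with Proposition~\ref{propIntensityM} gives $\phi(\lambda) = \mu\int_0^\infty e^{-\lambda t}\,\Expec{X_t}\,dt$, which proves the equivalence of the two formulations in the statement. Stochastic domination of $X$ by a rate-$1$ Yule process yields $\Expec{X_t}\le e^t$, and combined with the finite exponential moments of $T$ this shows that $\phi$ is finite, smooth, and strictly decreasing on an open half-line $(\lambda_-,\infty)$ with $\lambda_- < 0$, with $\phi\to 0$ as $\lambda\to\infty$ and $\phi\to\infty$ as $\lambda\downarrow\lambda_-$. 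Since $\phi(0) = \Expec{M}$ (finite by Theorem~\ref{thmM}), the existence and uniqueness of $\lambda$ with $\phi(\lambda) = 1$ follow, and the sign of $\lambda$ equals that of $\Expec{M} - 1 = \phi(0) - 1$.

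For (ii), a first-step decomposition on the ancestor's mutations gives the CMJ renewal equation
\[
  m(t) := \Expec{Z_t} \;=\; \Prob{T > t} + \int_0^t m(t-s)\,\nu(ds),
  \qquad \nu(ds) := \mu\,\Expec{X_s}\,ds.
\]
Multiplying by $e^{-\lambda t}$ turns this into a proper renewal equation against the probability measure $e^{-\lambda s}\nu(ds)$ (total mass $\phi(\lambda) = 1$), and the key renewal theorem yields $e^{-\lambda t}m(t) \to C > 0$ with $C$ given by the ratio of $\int e^{-\lambda s}\Prob{T > s}\,ds$ and the first moment $\int s\,e^{-\lambda s}\,\nu(ds)$. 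The required direct Riemann integrability of the forcing term and finiteness of the tilted mean both follow from the fact that $\lambda$ is strictly interior to the domain of $\phi$ (Step 1) together with the exponential moments of $T$.

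For (i), here $\lambda > 0$, so $N := \sum_{t\in\mathcal{M}} e^{-\lambda t} \le M$ pointwise, and Corollary~\ref{corExpTiltM} guarantees $\Expec{M^p} < \infty$ for all $p$. I would then invoke the supercritical CMJ convergence theorem of \cite{Jag75}: the Malthusian hypothesis combined with an $x\log x$ condition on the $\lambda$-tilted reproduction measure — automatic because $\phi$ is finite on a neighborhood of $\lambda$ by Step 1 — yields $e^{-\lambda t}Z_t \to W$ almost surely, with $\Expec{W}=1$ and $W>0$ a.s.\ on the survival event (Kesten--Stigum-type dichotomy in the CMJ setting). The $L^2$ strengthening follows from the stronger condition $\Expec{N^2}\le \Expec{M^2} < \infty$. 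The principal technical obstacle in the entire argument is the uniform control of $\phi$ on a neighborhood of $\lambda$, which reduces through Proposition~\ref{propIntensityM} to sharp enough moment estimates on $X_t$ and $T$, available from the results collected in Section~\ref{secLogistic}.
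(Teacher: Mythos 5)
Your CMJ framing, the equivalence of the two characterizations of $\lambda$ via Proposition~\ref{propIntensityM} and Campbell's formula, the sign argument, and part~(ii) via the key renewal theorem all match the paper's proof. The real divergence is in part~(i). The paper obtains $L^2$ convergence from \cite[Theorem~3.1]{CrMo69} using $\Expec{M^2}<\infty$, and almost-sure convergence from \cite[Theorem~3.2]{CrMo69}, whose hypothesis is that the mean reproduction intensity $m(t)=\mu\Expec{X_t}$ be differentiable with $\int_0^\infty|m'(t)|^p\,dt<\infty$ for some $p>1$; verifying this costs the paper a calculation showing $\int_0^\infty\Expec{X_t^p}\,dt=\sum_{j\geq 1}\bigl(\prod_{k=1}^j\rho_k^{-1}\bigr)j^p<\infty$. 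You instead invoke a Nerman-type almost-sure convergence theorem under an $x\log x$ condition on the tilted reproduction measure, which is trivially satisfied here since $N\le M$ and $\Expec{M^2}<\infty$. That is a genuine simplification that avoids the intensity-derivative computation. Two caveats: the precise reference is more plausibly Nerman \cite{Ner81} (which the paper uses for the very next proposition) than \cite{Jag75}; and the $L^2$ convergence is not a ``strengthening'' of the a.s.\ result but comes from a separate theorem with a separate hypothesis ($\Expec{N^2}<\infty$), as you partly acknowledge.

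On the existence of $\lambda$ when $\Expec{M}<1$: you assert that $\phi(\lambda)=\mu\int_0^\infty e^{-\lambda t}\Expec{X_t}\,dt$ is finite on a half-line $(\lambda_-,\infty)$ with $\lambda_-<0$ and that $\phi\to\infty$ as $\lambda\downarrow\lambda_-$, attributing both to the Yule bound $\Expec{X_t}\le e^t$ and to exponential moments of $T$. The Yule bound alone only controls $\phi$ for $\lambda>1$; pushing $\lambda_-$ below zero needs a genuine decay estimate on $\Expec{X_t}$ (e.g.\ Cauchy--Schwarz against $\Prob{T>t}$ together with a moment bound on $\sup_t X_t$), and divergence of a Laplace transform at the edge of its convergence strip is not automatic. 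The paper handles this more concretely with a one-line lower bound $\Expec{Y_\eta}\le\phi(\eta)$ that diverges as $\eta\downarrow-(1+\alpha+\mu)$, combined with the ``cannot jump to infinity'' meromorphy argument from Theorem~\ref{thmM} and the intermediate value theorem. Your plan is correct in outline, but these two assertions should be justified rather than declared.
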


\begin{proof}
The fact that the two characterizations of $\lambda$ are equivalent
follows from Proposition~\ref{propIntensityM} and Campbell's formula.
The uniqueness of $\lambda$ is standard, and so is the fact that $\lambda$
is guaranteed to exist whenever $\Expec{M} \geq 1$, see \cite[Section~6]{CrMo68}.
To see that $\lambda$ also exists when $\Expec{M} < 1$, letting $\tau$ denote
the time of the first jump of $X$ one can consider
the random variable $Y_{\eta}$ that takes the value $e^{-\eta\tau}$ if the
first jump of $X$ is a mutation, and 0 otherwise. Thus, 
$Y_\eta \leq \sum_{t \in \mathcal{M}} e^{-\eta t}$. A~straightforward
calculation then shows that
$\Expec{Y_\eta} = \mu \int_0^{\infty} e^{-(1 + \alpha + \mu + \eta)t} dt\to +\infty$
as $\eta$ decreases to $-(1+\alpha+\mu)$ and therefore can be made greater
than~1 by decreasing $\eta$. By the same reasoning as in the proof of the
meromorphy of $g$ in Theorem~\ref{thmM}, $\eta \mapsto \Expec{\sum_{t \in
\mathcal{M}}\! e^{-\eta t}}$ cannot jump to infinity. Since it is equal to
$\Expec{M} < 1$ when $\eta = 0$, the existence of $\lambda$ follows by
continuity.

Since Theorem~\ref{thmM} entails that $\Expec{M^2} < \infty$, the mean-square convergence in
point~(i) follows immediately from \cite[Theorem~3.1]{CrMo69}. Similarly, the
almost sure convergence follows from \cite[Theorem~3.2]{CrMo69}, provided
that the intensity function of $\mathcal{M}$, namely $m\colon t\mapsto \mu\, \Expec{X_t}$,
is differentiable and such that $\int_0^\infty |m'(t)|^p\, dt < \infty$
for some $p > 1$. Now, since
\[
  \frac{d}{dt} \Expec{X_t} \;=\;
  (1 - \alpha - \mu)\, \Expec{X_t} \;-\; \beta\, \Expec*{\big}{X_t (X_t-1)}
\]
and that $X_t$ is integer-valued, we have $|m'(t)| < K\, \Expec{X_t^2}$ for
some constant $K$. Thus, by Jensen's inequality,
to complete the proof of point~(i) it suffices
to show that $\int_0^\infty \Expec{X_t^p}\, dt < \infty$ for some $p > 2$.
Standard calculations, again using the decomposition of the trajectory
of $X$ into excursions, as in the proof of Theorem~\ref{thmM}, show that
\[
  \int_0^\infty \!\Expec{X_t^p}\, dt \;=\;
  \sum_{j \geq 1} \mleft(\prod_{k=1}^j \frac{1}{\rho_k}\mright) j^p. 
\]
Since $\prod_{k=1}^j \rho_k^{-1} = O(n^c \beta^{-n} / n!)$ for some constant
$c$, as already seen in the proof of Proposition~\ref{propBoundsG}, the
integral is finite for all $p$, finishing the proof of point~(i).

Finally, letting $T$ denote the extinction time of $X$,
$\Indic{T > t} \leq X_t$ and therefore 
$\Prob{T > t} \leq \Expec{X_t}$. Since, by definition of $\lambda$,
$\int_0^\infty \Expec{X_t}\, e^{-\lambda t} \, dt = 1/\mu$, we have
$\int_0^\infty \Prob{T > t}\, e^{-\lambda t} \, dt < \infty$ and thus
point~(ii) follows from \cite[Theorem~6.2]{CrMo68}.
\end{proof}

We now give another characterization of $\lambda$, which makes use of the
measure $\nucirc$ introduced in Proposition~\ref{propNuCircXfromMut}. Here, we let
$\ExpecCirc{\,\cdot\,}$ denote the expectation for the process $X$ started from a
random state with distribution $\nucirc$.

\begin{proposition} \label{propCharGrowthRate}
Let $\TweakSpacingL{T}$ denote the extinction time of $X$. Then,
the growth rate~$\lambda$ is the unique solution of
\[
 \Expec{M}\,  \ExpecCirc{e^{-\lambda T}} \;=\; 1.
\]
Furthermore,
\[
  \Expec{M}\,  \ExpecCirc{e^{-\lambda T}} \;=\;
  \mu \sum_{j\geq 1}\prod_{k=1}^j \frac{f_k(\lambda)}{\rho_k},
\]
where $f_k(\lambda)$ is given by the continued fraction
\[
  f_k(\lambda) \;=\; \cfrac{\rho_k}{1 + \rho_k + \frac{\lambda}{k} -
  \cfrac{\rho_{k + 1}}{1 + \rho_{k+1} + \frac{\lambda}{k+1} - \ddots}}
  \;=\;
  -\kettenbruch_{i=k}^\infty \frac{-\rho_i}{1 + \rho_i+\frac{\lambda}{i}}\,.
\]
\end{proposition}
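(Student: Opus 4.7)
The plan is to derive the first identity from the characterization of $\lambda$ in Proposition~\ref{propGrowthRateCMJ} by rewriting $\mu \Expec{\int_0^\infty X_t e^{-\lambda t}\, dt}$ as $\Expec{M}\,\ExpecCirc{e^{-\lambda T}}$. First, by Proposition~\ref{propIntensityM} applied to the Laplace-weighted Campbell formula, this quantity equals $\Expec{\sum_{u \in \mathcal{M}} e^{-\lambda u}}$. The size-biasing definition of $\Xmut$ then gives
\[
\Expec{\sum_{u \in \mathcal{M}} e^{-\lambda u}} \;=\; \Expec{M}\,\Expec{e^{-\lambda U}},
\]
where $U$ is the length of the past portion of $\Xmut$, i.e.\ the length of the negative part of its time domain $[-U, T-U)$. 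Invoking Proposition~\ref{propNuCircXfromMut}, I would use $\Xmut \overset{d}{=} X'\paste X''$ with $X'$ started from $K\sim\nucirc$ to conclude that $U$ has the law of the extinction time of the logistic branching process under $\nucirc$, whence $\Expec{e^{-\lambda U}} = \ExpecCirc{e^{-\lambda T}}$. Uniqueness of $\lambda$ is inherited from Proposition~\ref{propGrowthRateCMJ} and would anyway follow from the strict monotonicity of $\lambda\mapsto \ExpecCirc{e^{-\lambda T}}$.

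For the series representation, I would decompose
\[
\Expec{M}\,\ExpecCirc{e^{-\lambda T}} \;=\; \sum_{j\geq 1}\Expec{M}\,\nucirc(j)\,\Expec[j]{e^{-\lambda T}},
\]
where $\Expec[j]$ denotes expectation under $X_0=j$. Theorem~\ref{thmM} identifies the normalizing constant of $\nucirc$ as $C=\mu/\Expec{M}$, so that $\Expec{M}\,\nucirc(j) = \mu\prod_{k=1}^j \rho_k^{-1}$. Under $\Expec[j]$, the strong Markov property decomposes $T$ as an independent sum $\tau_j+\tau_{j-1}+\cdots+\tau_1$, where $\tau_k$ is the hitting time of $k-1$ for $X$ started at $k$. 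Setting $f_k(\lambda) = \Expec[k]{e^{-\lambda\tau_k}}$ therefore gives $\Expec[j]{e^{-\lambda T}} = \prod_{k=1}^j f_k(\lambda)$, which combined with the previous display produces the advertised series.

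It remains to derive the continued-fraction representation of $f_k$, which I would obtain by a first-jump analysis at state~$k$. There the process waits an exponential time of rate $k(1+\rho_k)$, after which it jumps to $k-1$ with probability $\rho_k/(1+\rho_k)$ (and we are done) or to $k+1$ with complementary probability; in the latter case, the strong Markov property decomposes the remaining hitting time of $k-1$ as $\tau'_{k+1}+\tau''_k$, two independent copies. This gives the functional equation
\[
f_k(\lambda) \;=\; \frac{k(1+\rho_k)}{k(1+\rho_k)+\lambda}\cdot\frac{\rho_k + f_{k+1}(\lambda)\,f_k(\lambda)}{1+\rho_k},
\]
which rearranges into $f_k(\lambda) = \rho_k\,\bigl/\,\bigl(1+\rho_k+\lambda/k - f_{k+1}(\lambda)\bigr)$; unfolding the recursion yields the stated continued fraction. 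The delicate point of the proof is step~1, namely the identification of the past length of $\Xmut$ with the extinction time of $X$ under $\nucirc$ via the back-to-back pasting convention of Definition~\ref{defBackToBackPasting}; the rest is bookkeeping with the strong Markov property and elementary algebra.
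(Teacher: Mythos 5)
Your proof is correct and follows essentially the same route as the paper: identify $\mu\,\Expec{\int_0^\infty X_t e^{-\lambda t}\,dt}$ with $\Expec{M}\,\Expec{e^{-\lambda U}\mathbin{\dagger}M}$, use the back-to-back construction of Proposition~\ref{propNuCircXfromMut} to read off that the past length of $\Xmut$ has the law of $T$ under $\P_\circ$, then expand over $\nucirc$ and factor $\Expec[j]{e^{-\lambda T}}$ via the strong Markov property. The only cosmetic difference is that you derive the continued-fraction recursion for $f_k(\lambda)$ by a direct first-jump analysis on the continuous-time chain, whereas the paper defers to the excursion decomposition already used for the generating function of $M$ in Theorem~\ref{thmM}; the two calculations are equivalent, and yours has the merit of being self-contained at that point.
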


The interest of this proposition is that, since the functions $f_k(\lambda)$
can be evaluated efficiently, so can $\Expec{M}\,\ExpecCircNormSize{e^{-\lambda T}}$.
This makes it straightforward to determine $\lambda$ numerically, for instance
using the bisection method.

\begin{proof}
The first part of the proposition is a consequence of the
standard characterization of $\lambda$, which is recalled in Proposition~\ref{propGrowthRateCMJ},
and of the construction of the process $\Xmut$ given
in Proposition~\ref{propNuCircXfromMut}. Indeed, first note that
$\Expec*{\normalsize}{\sum_{t \in \mathcal{M}} e^{-\lambda t}} =
\Expec{M}\, \Expec*{\normalsize}{e^{-\lambda U} \biasedby M}$,
where $U$ is a uniform atom of $\mathcal{M}$, and also corresponds to minus
the infimum of the times for which $\Xmut$ is defined. Second,
recall that $\Xmut$ is distributed as $X'\paste X''$, where $X'$ is distributed as~$X$
started from~$\nucirc$, and that in this construction $U$ corresponds to the
extinction time of~$X'$. As a result,
$\Expec*{\normalsize}{e^{-\lambda U} \biasedby M} = \ExpecCircNormSize{e^{-\lambda T}}$.

To express $\Expec{M}\, \ExpecCircNormSize{e^{-\lambda T}}$ as a function
of $\lambda$, for $k\geq 1$ let
\[
  f_k(\lambda) \;=\; \Expec{e^{-\lambda T_{k-1}} \given X_0 = k},
\]
where $T_{k-1}$ denotes the hitting time of $k-1$ by $X$.
By the strong Markov property,
\[
  \Expec{M}\, \ExpecCirc{e^{-\lambda T}} \;=\;
  \Expec{M}\sum_{j\geq 1} \nucirc(j)\prod_{k = 1}^j f_k(\lambda).
\]
From the expression of $\Expec{M}$ in Theorem~\ref{thmM}, we see that the normalizing
constant in Equation~\eqref{eqDefNuCirc} where $\nu_\circ$ is defined is
$C = \mu / \Expec{M}$, from which deduce that
\[
  \Expec{M}\, \nucirc(j) \;=\; \mu\prod_{k=1}^j\frac{1}{\rho_k} \,.
\]
Therefore, to finish the proof it only remains to show that
\[
  f_k(\lambda) \;=\;
  \frac{\rho_k}{1 + \rho_k + \frac{\lambda}{k} - f_{k+1}(\lambda)} \,.
\]
The reasoning is exactly the same as for the expression of the generating function
of~$M$ in Theorem~\ref{thmM}, so we do not detail it.
\end{proof}

So far, we have been focusing on the growth rate of $Z_t$, the number of colors
at time~$t$. But from a biological point of view it is arguably more natural
to consider~$\Upsilon_t$, the number of lineages at time~$t$. We therefore close
this section with a proposition showing that the asymptotic
growth rate of the number of lineages is the same as that of the number of
colors. For simplicity we do not try to state the results in full generality.

\begin{proposition}
Let $\lambda$ be the growth rate of $Z$, as given in Proposition~\ref{propGrowthRateCMJ}
and~\ref{propCharGrowthRate}, and let $\Upsilon_t$ be the number of lineages
alive at time~$t$. If $\lambda > 1$, then
\[
  e^{-\lambda t}\, \Upsilon_t
  \;\; \tendsto[\text{a.s.}]{\;t \to \infty\;} \;\;
  \Xi\,, 
\]
where $\Xi$ is a random variable that is almost surely positive on non-extinction.
\end{proposition}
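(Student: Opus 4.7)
The plan is to view $(\Upsilon_t)_{t\geq 0}$ as the same Crump--Mode--Jagers process underlying $(Z_t)$, but counted with a random characteristic rather than with the constant~$1$. To each color~$c$ attach the characteristic $\phi_c(a) = X^{(c)}(a)$, namely the number of lineages of color~$c$ at age~$a$ of that color. Since each new color starts from a single lineage and, given its birth time, evolves independently of the rest, the $\phi_c$'s are i.i.d.\ copies of the logistic branching process~$X$, and
\[
  \Upsilon_t \;=\; \sum_c \phi_c(t - \sigma_c),
\]
where the sum ranges over all colors ever born, $\sigma_c$ denotes the birth time of color~$c$, and $\phi_c(a)=0$ for $a<0$.

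Then I would apply Nerman's almost-sure convergence theorem for supercritical CMJ processes counted with a random characteristic (see \cite{Jag75} for a textbook treatment). Under suitable integrability, it yields
\[
  e^{-\lambda t}\,\Upsilon_t \;\tendsto[\text{a.s.}]{t\to\infty}\; c \cdot W,
\]
where $W$ is the same limit random variable as in Proposition~\ref{propGrowthRateCMJ}(i) and
\[
  c \;=\; \frac{\Expec{\int_0^\infty e^{-\lambda a}\, X_a\, da}}{\int_0^\infty a\, e^{-\lambda a}\, m(da)} \;=\; \frac{1/\mu}{\int_0^\infty a\, e^{-\lambda a}\, m(da)} \;>\; 0,
\]
with $m$ the intensity of $\mathcal{M}$; the numerator equals $1/\mu$ by the defining relation of~$\lambda$. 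Setting $\Xi = c\,W$ gives the claim, and $\Xi$ is almost surely positive on non-extinction because $W$ is.

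Two hypotheses must be verified, the second being the main obstacle. The first is a mild $x\log x$-type moment condition on the reproduction point process, which follows at once from $\Expec{M^2}<\infty$ -- itself a consequence of Theorem~\ref{thmM}. The second is an integrability condition on the characteristic, typically of the form $\Expec{\sup_{t\geq 0} e^{-\lambda t}\,\phi(t)}<\infty$, and this is precisely where the assumption $\lambda>1$ enters. To check it, I would dominate $X$ pathwise by a Yule process $\tilde{X}$ of rate~$1$ (obtained from the graphical construction by deleting all deaths, mutations, and coalescences), so that $X_t\leq \tilde{X}_t$ for every $t\geq 0$ almost surely. Since $e^{-t}\tilde{X}_t$ is a nonnegative martingale bounded in $L^p$ for every $p$, Doob's maximal inequality gives $\Expec{\sup_{t\geq 0}e^{-\lambda t}\,\tilde{X}_t}<\infty$ as soon as $\lambda\geq 1$, which yields the required bound on~$\phi$.
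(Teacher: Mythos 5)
Your proposal is correct and follows the same high-level strategy as the paper: view $\Upsilon_t$ as the same CMJ process underlying $Z_t$ but counted with the random characteristic $\phi_c(a)=X^{(c)}_a$, then invoke Nerman's almost-sure convergence theorem. Where you diverge is in the verification of the integrability condition on the characteristic. You dominate $X$ pathwise by a rate-$1$ Yule process $\tilde{X}$ and apply Doob's maximal inequality to the martingale $e^{-t}\tilde{X}_t$; this works, but it genuinely uses $\lambda\geq 1$ (for $\lambda<1$, $e^{-\lambda t}\tilde{X}_t$ is a submartingale with diverging mean, so the Yule bound gives nothing). The paper instead observes that the total number of jumps of $X$ has finite exponential moments -- by the same excursion argument used for $M$ in Theorem~\ref{thmM} -- and, since $X$ has $\pm 1$ jumps, this gives $\Expec{\sup_t X_t}<\infty$ outright, which is a stronger statement that does not rely on $\lambda$ at all and exploits the quadratic death rate specific to the logistic process. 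Your route is more universal (it would apply to any CMJ process whose profile is dominated by a Yule process, at the price of assuming $\lambda\geq 1$), while the paper's route is sharper for this particular model. Your computation of the limiting constant $c>0$ and its positivity are also correct.
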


\begin{proof}
Again, this is a standard application of general results for CMJ processes
counted with a random characteristic, see e.g.~\cite{Ner81}.
More specifically, let the characteristic associated to each color be the
number of lineages of that color. Note that the characteristic of a color
is not independent of its lifespan and of its reproduction, but that the
characteristics of different colors are independent. Since 
$\Expec{M} < \infty$, Condition~5.1 in \cite{Ner81} holds with
$g(t) = e^{-\lambda t}$. Moreover, by using the same argument as for $M$
it is straightforward to show that the total number of jumps of $X$ has finite
exponential moments. Since $X$ has bounded jumps, this implies that
$\Expec{\sup_t X_t} < \infty$, and so Condition~5.2 in \cite{Ner81} holds with
$h(t) = e^{-\lambda t}$. As a result, the proposition follows
from \cite[Theorem~5.4]{Ner81}.
\end{proof}

\section{Convergence to the CRT} \label{secCRT}

In this section, we study the large-scale geometry of $\mathscr{G}$.
We will show that, after being conditioned to have $n$ colors and appropriately
rescaled, as $n$ goes to infinity $\mathscr{G}$ converges in distribution to
the Brownian continuum random tree (CRT) for the rooted
Gromov--Hausdorff--Prokhorov topology.

The Brownian CRT, introduced by Aldous in~\cite{Ald91CRT}, is the universal
scaling limit of critical Galton--Watson trees when the offspring distribution
has finite variance.  Since its first description as a random subset of
$\ell^1$ obtained by successively glueing segments of random lengths along
orthogonal directions, it has become standard (see e.g.\ \cite[Section
2]{LeGall05} and \cite[Section 2.4]{Eva08}) to view
it as the random rooted compact metric probability space $(\mathscr{C}, r, d,
\lambda)$ defined in the following way:
\begin{itemize}
  \item Take a standard Brownian excursion $(e(t))_{t\in [0,1]}$.
  \item Define a pseudo-metric $d_e$ on $\ClosedInterval{0, 1}$ by
    $d_e(x,y) = e(x)+e(y) - 2\inf_{z\in [x,y]}e(z)$, where $[x,y]$ is a slight
    abuse of notation for the segment $[x\wedge y, x \vee y]$.
  \item Let $(\mathscr{C},d)$ be the quotient metric space obtained by
    identifying the points of~$[0,1]$ at distance zero for $d_e$, and let the root
    $r\in\mathscr{C}$ be the equivalence class of $0$.
  \item Let $\lambda$ be the pushforward on $\mathscr{C}$ of the Lebesgue
    measure on $\ClosedInterval{0, 1}$.
\end{itemize}

The rest of this section is organized as follows:
first, we give a brief reminder about convergence in the rooted
Gromov--Hausdorff--Prokhorov topology.  Coming back to our
model, we then detail how to condition $\mathscr{G}$ on having~$n$ colors, and
we introduce some notation. Finally, we prove a series of technical lemmas
which, when put together, readily give us the desired convergence to the CRT.

\subsection{The rooted Gromov--Hausdorff--Prokhorov distance} \label{secTopology}

Here we recall, mostly without proof, the minimal set of notions about
convergence of metric probability spaces that are needed to state and prove
our results. More detailed treatments can be found, e.g, in \cite[Section
6]{Mie09} or in \cite[Section~4]{Eva08}.
In particular, Proposition~\ref{propKeyToCRT} below provides a
general-purpose, simple way to establish convergence to the CRT by following
the approach used in \cite{Stu22}. See also \cite{SS18a} for related results.

Since our network $\mathscr{G}$ has a distinguished point, namely the point
that corresponds to the first lineage at time~$0$, it is natural to work with
a rooted version of the Gromov--Hausdorff--Prokhorov distance.
We adapt the definition of \cite[Section~6.2]{Mie09} to the rooted setting: let
$\mathbb{M}$ be the set\footnote{\,It is not obvious that $\mathbb{M}$ can be
defined as a set, because the class of compact metric spaces is not a set.
However, since a compact metric space has cardinal at most $\mathfrak{c} =
\Card \R$, all isometry classes are obtained by considering subsets of $\R$
endowed with a metric and a measure -- and these do indeed form a well-defined
set.} of isometry classes of rooted compact metric probability spaces
$\mathscr{X}=(\mathscr{X}, r, d, \lambda)$, where $r\in \mathscr{X}$ is called
the root of $\mathscr{X}$; $d$ is a metric on $\mathscr{X}$; and $\lambda$
is a probability measure on $\mathscr{X}$.
The rooted Gromov--Hausdorff--Prokhorov distance $\GHPdist(\mathscr{X}, \mathscr{X}')$
between two elements 
$(\mathscr{X}, r, d, \lambda)$ and $(\mathscr{X}', r', d', \lambda')$ of~$\mathbb{M}$
is defined as the infimum of the $\epsilon>0$ such that there exists a well-defined metric
$\delta$ on the disjoint union $\mathscr{Y}\defas\mathscr{X}\sqcup\mathscr{X}'$ satisfying:
\begin{mathlist}
\item For all $x,y\in \mathscr{X}$ and $ x',y'\in \mathscr{X}'$, $\delta(x,y)=d(x,y)$ and $\delta(x',y')=d'(x',y')$.
\item $\delta(r,r')\leq \epsilon$.
\item The Hausdorff distance $\delta_{\mathrm{H}}(\mathscr{X},\mathscr{X}')$
  between $\mathscr{X}$ and $\mathscr{X}'$ is at most $\epsilon$; in other
  words, $\mathscr{X}' \subset \mathscr{X}^{\epsilon}$ and $\mathscr{X} \subset
  (\mathscr{X}')^\epsilon$, where $A^\epsilon = \Set{y\in
  \mathscr{Y} \suchthat \exists x\in A, \delta(x,y) < \epsilon}$.
\item Extending $\lambda$ and $\lambda'$ to $\mathscr{Y}$ via
  $\lambda_{\mathscr{Y}}(A) = \lambda(A\cap \mathscr{X})$ and
  $\lambda'_{\mathscr{Y}}(A) = \lambda'(A\cap \mathscr{X}')$, 
  the Prokhorov distance between $\lambda_{\mathscr{Y}}$ and $\lambda'_{\mathscr{Y}}$
  is at most $\epsilon$, i.e.\ for all Borel subset $A\subset \mathscr{Y}$, we have
  $\lambda_{\mathscr{Y}}(A) \leq \lambda'_{\mathscr{Y}}(A^{\epsilon})+\epsilon$.
\end{mathlist}

The space $(\mathbb{M},\,\GHPdist)$ is a complete separable
metric space (see e.g.\ \cite[Theorem~6 and Proposition~8]{Mie09} for a proof in the
unrooted setting; we let the interested reader check that the proof carries over
to the rooted setting, and refer them to \cite[Section 4.3.3]{Eva08} where
this is done for the Gromov--Hausdorff distance).

Because our metric spaces are tree-like, in our setting it will be more
convenient to work with height processes than to manipulate
$\GHPdist$ directly. Let us start by recalling how one can obtain a
metric space from a càdlàg function, and introducing some notation.
\newtext{Note that this construction is simply a generalization of the
construction of the Brownian CRT recalled at the beginning of this section,
but with more general functions as contour processes.}

\begin{definition} \label{defRTree}
Let $h\colon \ClosedInterval{0, 1} \to \R$ be a nonnegative càdlàg
function such that $h(0) = 0$. We denote by $d_h$ the pseudometric on
$\ClosedInterval{0, 1}$ defined by
\[
  d_h(x,y) \;=\; h(x) \,+\,  h(y) \,-\; 2\!\! \inf_{z\in [x,y]}\! h(z),
\]
where, as previously, $\ClosedInterval{x, y}$ is shorthand for
$\ClosedInterval{x\wedge y, x \vee y}$.  We then denote by $\mathscr{T}_h$ the
rooted compact metric probability space obtained by: (1)~identifying points
$x, y \in \ClosedInterval{0, 1}$ such that $d_h(x, y) = 0$; (2) taking the
completion of the space with respect to $d_h$; (3) taking the
equivalence class of $0$ as the root; and (4) endowing the resulting rooted
metric space with the pushforward of the Lebesgue measure on
$\ClosedInterval{0, 1}$. This metric space is a subset of an
$\R$-tree and consists of a countable number of connected components -- see
Figure~\ref{figDefRForest} for an illustration, and e.g.\ \cite{Eva08} for an
introduction to $\R$-trees.
\end{definition}

\begin{figure}[h!]
  \centering
  \includegraphics[width=0.9\linewidth]{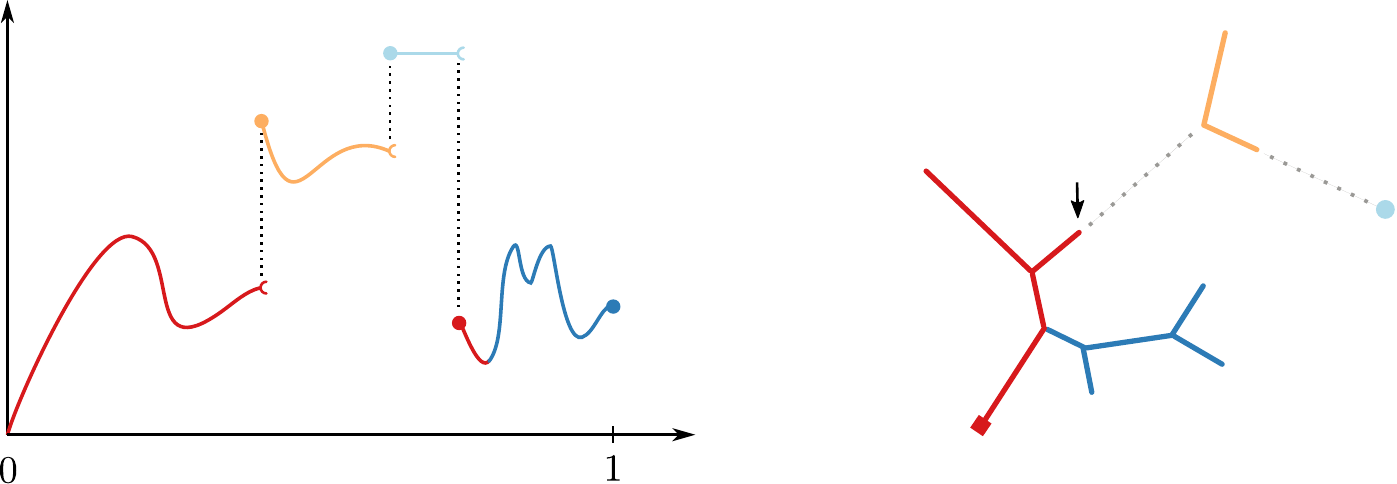}
  \caption{A nonnegative càdlàg function and the corresponding metric space,
  as given by Definition~\ref{defRTree}. The square indicates the root.
  The colors are irrelevant and are merely here to help show which
  part of the function corresponds to which part of the metric space. The
  dotted lines are not part of the metric space but are here to indicate
  how it can be embedded in an $\R$-tree. Note that we have added the tip of
  the red branch pointed at by the arrow in step (2) of the construction.}
  \label{figDefRForest}
\end{figure}

The interest of working with $\R$-trees and their height processes comes from
the following lemma, which is a straightforward extension of~\cite[Lemma~2.4]{LeGall05}.
Let us denote by $D$ the space of càdlàg functions from $[0,1]$ to $\R$ that are
also continuous at $1$, endowed with the usual Skorokhod topology~\cite{Bil99}.

\begin{lemma} \label{lemLeGall}
The map $h\in D \mapsto \mathscr{T}_h\in \mathbb{M}$ is continuous.
In other words, if $h_1, h_2, \ldots$ and $h$ satisfy the hypotheses of Definition~\ref{defRTree}, then
\[
  h_n \longrightarrow\; h\; \text{ in } D
  \quad\implies\quad
  \mathscr{T}_{h_n} \longrightarrow\; \mathscr{T}_h
  \;\text{ in }(\mathbb{M},\, \GHPdist).
\]
\end{lemma}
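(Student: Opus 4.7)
The plan is to adapt the classical continuous-function argument of \cite[Lemma~2.4]{LeGall05} to the càdlàg setting (Skorokhod instead of uniform) and to extend it from GH to GHP. Since $h_n\to h$ in $D$, there exist increasing homeomorphisms $\phi_n\colon[0,1]\to[0,1]$ fixing $0$ and $1$ with $\|\phi_n-\mathrm{id}\|_\infty\to 0$ and $\epsilon_n\defas\|h_n\circ\phi_n - h\|_\infty\to 0$, and the whole proof is built around the correspondence
\[
R_n \;=\; \bigl\{(\pi_n(\phi_n(t)),\pi(t)) : t\in[0,1]\bigr\}
\]
between $\mathscr{T}_{h_n}$ and $\mathscr{T}_h$, where $\pi_n,\pi$ denote the canonical projections from $[0,1]$.

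For the Gromov--Hausdorff part I first observe that, because $\phi_n$ is an increasing bijection of $[0,1]$, a direct substitution gives
$d_{h_n}(\phi_n(s),\phi_n(t)) = d_{h_n\circ\phi_n}(s,t)$, so that
\[
\bigl|d_{h_n}(\phi_n(s),\phi_n(t)) - d_h(s,t)\bigr| \;\le\; 4\epsilon_n.
\]
This bounds the distortion of $R_n$ by $4\epsilon_n$ and shows that $(r_n,r)=(\pi_n(0),\pi(0))\in R_n$, so the roots are paired. In particular, one can extend the metrics to $\mathscr{Y}=\mathscr{T}_{h_n}\sqcup\mathscr{T}_h$ by the standard construction
$\delta_n(x,y) = \inf\{d_{h_n}(x,\pi_n(\phi_n(s)))+2\epsilon_n + d_h(\pi(s),y) : s\in[0,1]\}$ so that $\delta_n(\pi_n(\phi_n(t)),\pi(t))\le 2\epsilon_n$ uniformly in $t$.

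The delicate part is the Prokhorov piece, because the natural coupling coming from $R_n$ does not have the right marginals: if $T\sim\mathrm{Leb}$, then $\pi_n(\phi_n(T))$ is distributed according to $(\phi_n)_*\mathrm{Leb}$ pushed forward by $\pi_n$, not according to $\lambda_n=(\pi_n)_*\mathrm{Leb}$. I would fix this by using the coupling $(\pi_n(T),\pi(T))$, which \emph{does} have marginals $\lambda_n$ and $\lambda$, and bounding
\[
\delta_n\bigl(\pi_n(T),\pi(T)\bigr) \;\le\; d_{h_n}(T,\phi_n(T)) \;+\; 2\epsilon_n
\]
by the triangle inequality through the point $\pi_n(\phi_n(T))$. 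Then $\mathrm{GHP}$-convergence will follow from Markov's inequality once I show $\int_0^1 d_{h_n}(t,\phi_n(t))\,dt \to 0$.

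This last bound is the main obstacle, since $d_{h_n}(t,\phi_n(t))$ need not be small pointwise when $h$ has a jump between $t$ and $\phi_n(t)$. The key observation is that $h$, being càdlàg, is continuous except at a countable (hence Lebesgue-null) set of points. At every continuity point $t$ of $h$, I can show that each of the three terms
$h_n(t)$, $h_n(\phi_n(t))$ and $\inf_{u\in[t\wedge\phi_n(t),\,t\vee\phi_n(t)]} h_n(u)$ converges to $h(t)$: the middle term is directly $\|h_n\circ\phi_n-h\|_\infty$-close to $h(t)$, the first is handled by writing $h_n(t)=(h_n\circ\phi_n)(\phi_n^{-1}(t))$ and using that $\phi_n^{-1}\to\mathrm{id}$ uniformly together with continuity of $h$ at $t$, and the third by the same reasoning applied uniformly over a shrinking neighbourhood of $t$. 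Hence $d_{h_n}(t,\phi_n(t))\to 0$ for a.e.\ $t$; combined with the uniform bound $d_{h_n}(t,\phi_n(t))\le 2\sup h_n$, which is bounded since $h_n$ converges in $D$, dominated convergence yields $\int_0^1 d_{h_n}(t,\phi_n(t))\,dt\to 0$, concluding the proof.
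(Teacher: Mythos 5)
Your proof is correct, and the Gromov--Hausdorff part is essentially the same as the paper's (bound the distortion of the correspondence $\{(\pi_n(\phi_n(t)),\pi(t))\}$ by $4\epsilon_n$). Where you genuinely diverge is the Prokhorov component. The paper works with the bi-Lipschitz version of the Skorokhod metric, using $\mathrm{Lip}(\theta)<1+\epsilon$ to control how the time-change distorts Lebesgue measure: $\ell(\theta(B))\geq(1-\epsilon)\ell(B)$, which gives $\lambda_g((AR)^{2\epsilon})+4\epsilon\geq\lambda_f(A)$ directly for every Borel set $A$. You instead bypass the Lipschitz control entirely, keep only $\|\phi_n-\mathrm{id}\|_\infty\to 0$, and replace the correspondence-driven coupling $(\pi_n(\phi_n(T)),\pi(T))$ (wrong marginals) with $(\pi_n(T),\pi(T))$ (right marginals), then show that $d_{h_n}(t,\phi_n(t))\to 0$ at every continuity point of $h$ and conclude by dominated convergence, since the continuity points are co-countable hence of full Lebesgue measure. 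Both approaches are sound; the paper's buys you the quantitative statement $d_{\mathrm{Sk}}(f,g)<\epsilon\Rightarrow\GHPdist(\mathscr{T}_f,\mathscr{T}_g)\leq 4\epsilon$, i.e.\ a Lipschitz modulus, whereas your route yields only sequential continuity (which is all the lemma asserts) but is more self-contained in that it avoids invoking the less elementary $\mathrm{Lip}(\theta)$-based metrization of $J_1$ and instead rests on the familiar fact that càdlàg functions have at most countably many discontinuities.
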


A self-contained proof can be found in the appendices.

From Lemma~\ref{lemLeGall}, we get the upcoming Proposition~\ref{propKeyToCRT}, which provides a
general recipe for proving convergence to the CRT in the rooted
Gromov--Hausdorff--Prokhorov topology, and is going to be our main tool for the
rest of this section.

\begin{definition} \label{defAdmissibleTraversal}
Let $(\mathscr{X}, r, d, \lambda)$ be a random rooted compact metric
probability space. A random càdlàg function $\phi\colon [0, 1] \to \mathscr{X}$ is
said to be a \emph{parametrization} of~$\mathscr{X}$ if
$\phi(0) = r$ and $\phi([0, 1])$ is almost surely dense in $\mathscr{X}$.

A parametrization $\phi$ is said to be \emph{admissible} if
can write $\phi(t) = \Phi(\mathscr{X},\Theta,t)$,
where $\Phi$ is a deterministic functional and $\Theta$ is a random variable
with values in $[0,1]$ that is independent from $\mathscr{X}$, in such a way
that the functions $t \mapsto \lambda(\phi([0, t]))$ and
$t \mapsto d(r, \phi(t))$ are well-defined random variables in the Skorokhod
space $D$.
\end{definition}

We require our parametrizations to be admissible for measurability issues --
namely, we need this assumption in order to use a variant of Skorokhod's
representation theorem in the proof of Proposition~\ref{propKeyToCRT} below.
In practice, admissibility should not be a restrictive requirement.
In our case, we will define a parametrization of our network $\mathscr{G}$
through a randomized traversal algorithm where, conditional on the network, the
additional randomness that is needed amounts to a finite number of coin tosses;
such a parametrization is readily checked to be admissible.

\begin{proposition} \label{propKeyToCRT}
Let $(\mathscr{X}_n, r_n, d_n, \lambda_n)_{n\geq 1}$ be a sequence of random
rooted compact metric probability spaces such that, for each $n\geq 1$,
there exists an admissible parametrization $\phi_n\colon[0,1]\to \mathscr{X}_n$.
Assume that, setting $h_n(t) = d_n(r_n,\phi_n(t))$:
\begin{mathlist}
  \item $\sup_{s,t\in [0,1]} \Abs[\big]{d_n(\phi_n(s),\phi_n(t)) - d_{h_n}(s, t)} \overset{d}{\longrightarrow} 0$. 
  \item $\sup_{t\in [0,1]}\Abs[\big]{\lambda_n(\phi_n([0,t]))-t\,}\overset{d}{\longrightarrow} 0$. 
  \item $(h_n(t))_{t\in [0,1]} \overset{d}{\longrightarrow} (h(t))_{t\in [0,1]}$
    for the Skorokhod topology,
    where $(h(t))_{t\in [0,1]}$ is a random càdlàg function.
\end{mathlist}
Then, $\mathscr{X}_n \overset{d}{\longrightarrow} \mathscr{T}_h$
for the rooted Gromov--Hausdorff--Prokhorov topology.
\end{proposition}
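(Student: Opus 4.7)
My plan is to establish the convergence in three layers. First, I would use Skorokhod's representation theorem to upgrade the convergences in distribution in (i)--(iii) to almost-sure convergences on a common probability space; admissibility enters precisely here, since it is what guarantees that $\mathscr{X}_n$, $\phi_n$, $h_n$ and the auxiliary càdlàg process $t\mapsto \lambda_n(\phi_n([0,t]))$ are jointly measurable as random elements of Polish spaces, so that such a coupling exists. Second, applying Lemma~\ref{lemLeGall} to condition~(iii) yields $\mathscr{T}_{h_n}\to \mathscr{T}_h$ in $(\mathbb{M},\GHPdist)$ almost surely, so by the triangle inequality it suffices to show $\GHPdist(\mathscr{X}_n,\mathscr{T}_{h_n})\to 0$ almost surely. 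All subsequent work takes place on this coupled probability space.

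Third, I would glue $\mathscr{X}_n$ and $\mathscr{T}_{h_n}$ into a common compact metric space $\mathscr{Y}_n := \mathscr{X}_n \sqcup \mathscr{T}_{h_n}$ via the natural correspondence $\{(\phi_n(t),\pi_n(t)) : t\in[0,1]\}$, where $\pi_n\colon [0,1]\to \mathscr{T}_{h_n}$ is the canonical projection from Definition~\ref{defRTree}. Setting $\varepsilon_n := \sup_{s,t}|d_n(\phi_n(s),\phi_n(t)) - d_{h_n}(s,t)|$, which vanishes by~(i), the cross-distance
\[
  \delta_n(x,y) \;=\; \inf_{t\in[0,1]}\bigl[\,d_n(x,\phi_n(t)) + d_{h_n}(\pi_n(t),y)\,\bigr] \,+\, \varepsilon_n, \qquad x\in\mathscr{X}_n,\ y\in\mathscr{T}_{h_n},
\]
extends (together with $d_n$ and $d_{h_n}$ inside each component) to an honest metric on $\mathscr{Y}_n$, the triangle inequality across components following routinely from the definition of $\varepsilon_n$. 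Since $\delta_n(\phi_n(t),\pi_n(t))\leq \varepsilon_n$ for every~$t$ and since $\phi_n([0,1])$, $\pi_n([0,1])$ are dense in their respective compact spaces, both the Hausdorff distance between $\mathscr{X}_n$ and $\mathscr{T}_{h_n}$ in $\mathscr{Y}_n$ and the distance between the two roots $\phi_n(0)$ and $\pi_n(0)$ are at most $\varepsilon_n\to 0$.

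It then remains to bound the Prokhorov distance between $\lambda_n$ and $\lambda_{h_n}$ on $(\mathscr{Y}_n,\delta_n)$. I~would proceed by introducing the pushforward measure $\phi_n^{*}\mathrm{Leb}$ on $\mathscr{X}_n$ and estimating $d_P(\lambda_n,\lambda_{h_n})$ through $\phi_n^{*}\mathrm{Leb}$ as a pivot. The bound $d_P(\phi_n^{*}\mathrm{Leb},\lambda_{h_n}) \leq \varepsilon_n$ is immediate via the coupling $(\phi_n(U),\pi_n(U))$ with $U\sim\mathrm{Unif}([0,1])$, since $\delta_n(\phi_n(U),\pi_n(U))\leq \varepsilon_n$. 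For the other half, setting $F_n(t):=\lambda_n(\phi_n([0,t]))$ and $\eta_n := \sup_t|F_n(t)-t|$, condition~(ii) gives $\eta_n\to 0$, and the right-continuous inverse of $F_n$ satisfies $|F_n^{-1}(u)-u|\leq \eta_n$; this suggests coupling $\lambda_n$ and $\phi_n^{*}\mathrm{Leb}$ on $\mathscr{X}_n$ via the pair $(\phi_n(F_n^{-1}(U)),\phi_n(U))$, which by construction matches the two measures on all prefix sets $\phi_n([0,t])$.

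The main obstacle will be to promote this ``prefix-CDF'' agreement into a genuine Prokhorov bound between $\lambda_n$ and $\phi_n^{*}\mathrm{Leb}$: the $\sigma$-algebra generated by the sets $\phi_n([0,t])$ need not in general determine either measure. To close this gap I would lean again on~(i): if $\phi_n(s)=\phi_n(t)$ then $d_{h_n}(s,t)\leq \varepsilon_n$, and because $d_{h_n}$ is a tree-like pseudo-metric on $[0,1]$, the Lebesgue measure of such near-revisit pairs is forced to be small, so that $\phi_n^{*}\mathrm{Leb}$ cannot concentrate mass in ways invisible to the prefix sets. Once this is in place, the three Prokhorov bounds combine through the triangle inequality to yield $d_P(\lambda_n,\lambda_{h_n})\to 0$ on $\mathscr{Y}_n$, which together with the Hausdorff and root estimates from the previous paragraph gives $\GHPdist(\mathscr{X}_n,\mathscr{T}_{h_n})\to 0$ almost surely and completes the proof.
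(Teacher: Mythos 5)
Your overall architecture is exactly right and matches the paper's: Skorokhod representation to upgrade (i)--(iii) to almost-sure convergence (with admissibility supplying the measurability needed to view $(\mathscr{X}_n,\phi_n)$ as a random element of a Polish space), Lemma~\ref{lemLeGall} to reduce to $\GHPdist(\mathscr{X}_n,\mathscr{T}_{h_n})\to 0$, and a correspondence $\{(\phi_n(t),\psi_n(t)):t\in[0,1]\}$ whose distortion is controlled by $\varepsilon_n$. The Hausdorff and root estimates are fine.

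The gap is exactly where you flag it, in the Prokhorov step, and your proposed repair does not close it. You want to route the comparison of $\lambda_n$ and $\lambda_{h_n}$ through the pushforward $\phi_n^{*}\mathrm{Leb}$, but assumption~(ii) only controls $\lambda_n$ on the nested family $\phi_n([0,t])$, which does not determine $\lambda_n$ as a Borel measure. Your suggested fix---that (i) forces the Lebesgue measure of near-revisit pairs to be small---is not correct as stated: $d_{h_n}(s,t)\leq\varepsilon_n$ does not have small $2$-dimensional Lebesgue measure in general (think of $h_n$ close to a function with large oscillation near zero, where a positive fraction of pairs $(s,t)$ map to points within $\varepsilon_n$ of each other in the tree), and nothing in (i) alone bounds how much mass $\phi_n^{*}\mathrm{Leb}$ can pile onto a region that the prefix sets cannot resolve. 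What the paper actually does is bypass $\phi_n^{*}\mathrm{Leb}$ entirely: using (iii), one chooses a \emph{finite} partition $I_1^n,\dots,I_k^n$ of $[0,1]$ (with $k$ uniform in $n$) such that each $\psi_n(I_j^n)$ has diameter less than $\epsilon$. Then for a Borel set $A\subset\mathscr{X}_n$, every block meeting $A$ via $\phi_n$ has its entire $\psi_n$-image inside $(AR_n)^\epsilon$, and one lower-bounds $\lambda_{h_n}((AR_n)^\epsilon)$ by the total Lebesgue length of those blocks, which assumption~(ii) ties to $\sum_j\lambda_n(\phi_n(I_j^n))\geq\lambda_n(A\cap\phi_n([0,1]))$, giving $\lambda_{h_n}((AR_n)^\epsilon)+\epsilon\geq\lambda_n(A)$ directly. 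Note that this is a place where assumption~(iii) is indispensable in the Prokhorov estimate itself (it produces the finite partition), whereas in your plan (iii) is used only to pass from $\mathscr{T}_{h_n}$ to $\mathscr{T}_h$.
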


Again, this proposition is proved in the appendices.

\subsection{Conditioning on the number of colors} \label{secConditioningColors}

We now introduce some notation for conditioning $\mathscr{G}$ on its number of
colors. This notation will also be used in Section~\ref{secLocalLimit},
where we study the local weak limit of $\mathscr{G}$ conditioned to have $n$ colors.
First, recall that $\mathscr{G}$ can be viewed as the decorated
Galton--Watson tree $\mathscr{T}^\star$ obtained as follows:
\begin{enumerate}
  \item Sample a Galton--Watson tree $\mathscr{T}$ with offspring distribution $M$.
  \item Conditional on $\mathscr{T}$, decorate each vertex $k$ with the
    network $\mathscr{X}_k$ associated to an independent realization of the
    process $X$ conditioned on having $M_k$ mutations (where $M_k$ denotes the
    number of children of $k$ in $\mathscr{T}$).
\end{enumerate}
Let $A_n$ be the event \{$\mathscr{G}$ has $n$ colors\}, i.e.\ 
\{$\mathscr{T}$ has $n$ vertices\}. Since $A_n$ is a deterministic function
of $\mathscr{T}$ and that the networks $(\mathscr{X}_k)$ depend on $\mathscr{T}$
only, $\mathscr{G}_n \sim (\mathscr{G} \,|\,  A_n)$ can be
obtained by replacing $\mathscr{T}$ with $\mathscr{T}_n \sim (\mathscr{T} \,|\,
A_n)$ in step~2 of the construction above -- i.e.\ by decorating a
Galton--Watson tree with offspring distribution $M$ conditioned to
have $n$ vertices.

Note that we have not assumed that $\Expec{M}=1$. Thus,
$\mathscr{T}$ is not necessarily critical.
However, we know from Corollary~\ref{corExpTiltM} that there exists $\zeta > 0$
such that
\[
  \frac{\Expec{M \zeta^M}}{\Expec{\zeta^M}} \;=\; 1.
\]
Thus, letting $\hat{M}$ be a $\zeta$-tilt of $M$, i.e.\ a
random variable whose distribution is characterized by
\[
  \Expec{f(\hat{M})} \;=\; \frac{\Expec{f(M)\,\zeta^M}}{\Expec{\zeta^M}}
  \quad\text{for all bounded } f\colon \N \to \R, 
\]
by considering a Galton--Watson tree with offspring distribution $\hat{M}$ we
get a critical Galton--Watson tree~$\hat{\mathscr{T}}$. It is classic --
and straightforward to check by writing down the probability distributions
explicitly -- that $\mathscr{T}_n$ has the same distribution as
$\hat{\mathscr{T}}$ conditioned to have $n$ vertices.

Since we will be conditioning on $A_n$ and that
$(\mathscr{T} \,|\, A_n) \sim (\hat{\mathscr{T}} \,|\, \hat{A}_n)$, it
may not be clear at this point what the interest of working with
$\smash{\hat{\mathscr{T}}}$ instead of $\mathscr{T}$ is; this will become apparent
later -- see e.g.\ Remark~\ref{remCritical} -- but for now let us simply note
that for any nonnegative function~$f$ we have
$\Expec*{\normalsize}{f(\mathscr{\hat{T}}) \given \hat{A}_n} \leq
\Expec*{\normalsize}{f(\mathscr{\hat{T}})} / \Prob*{\normalsize}{\hat{A}_n}$
and that, as the following classic proposition shows, it is straightforward to get an
asymptotic equivalent of $\Prob*{\normalsize}{\hat{A}_n}$ as $n$ goes to infinity.

\begin{proposition} \label{propTotalProgenyGW}
Let $\hat{\mathscr{T}}$ be a critical Galton--Watson tree whose offspring
distribution has a finite variance $\sigma^2 > 0$ and is not supported on $k\N$, for any $k\geq 2$.
Let $\hat{A}_n$ denote the event
$\Set*{\hat{\mathscr{T}} \text{ has } n \text{ vertices}}$. Then,
\[
  \Prob*{\normalsize}{\hat{A}_n} \;\underset{n\to\infty}{\sim}\;
  \frac{1}{\sqrt{2\pi \sigma^2}}\, n^{-3/2}\,.
\]
\end{proposition}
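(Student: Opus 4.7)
The statement is the classical asymptotic for the total progeny of a critical Galton--Watson tree, due essentially to Otter and to Kemperman, and my plan is to obtain it in the standard way by combining a cyclic lemma-type identity with the local central limit theorem.

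First, I would reduce the question to a local probability for a sum of i.i.d.\ variables via the Dwass--Otter--Kemperman formula: if $\hat{M}_1, \hat{M}_2, \ldots$ are i.i.d.\ copies of $\hat{M}$ and $S_n = \hat{M}_1 + \cdots + \hat{M}_n$, then
\[
  \Prob*{\big}{\hat{A}_n} \;=\; \Prob*{\big}{|\hat{\mathscr{T}}| = n} \;=\; \frac{1}{n}\,\Prob*{\big}{S_n = n - 1}\,.
\]
The proof of this identity is a standard application of the cycle lemma to the Łukasiewicz path associated with a depth-first exploration of the tree: one uses that $(\hat{\mathscr{T}}, \sigma)$ with $\sigma$ a uniform vertex can be encoded by a cyclic shift of a walk with $n$ increments of law $\hat{M} - 1$ and terminal value $-1$, and among the $n$ cyclic shifts of such a walk exactly one yields a valid Łukasiewicz path of a tree of size $n$. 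I would simply cite this fact, as it is standard.

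Second, I would apply Gnedenko's local central limit theorem to $S_n$. Since $\Expec{\hat{M}} = 1$ and $\mathrm{Var}(\hat{M}) = \sigma^2 \in (0, \infty)$, the only remaining hypothesis is aperiodicity of $\hat{M}$, i.e.\ that the subgroup of $\Z$ generated by $\{m - m' : m, m' \in \mathrm{supp}(\hat{M})\}$ is $\Z$. Because $\hat{M}$ has mean $1$ and positive variance, its support contains~$0$ (indeed if the smallest element of the support were $\geq 1$, criticality and variance $>0$ together would force a contradiction), so this subgroup is generated by $\mathrm{supp}(\hat{M})$ itself; hence aperiodicity is equivalent to $\mathrm{supp}(\hat{M})$ not being contained in $k\N$ for any $k \geq 2$, which is exactly the hypothesis. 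Gnedenko's theorem then gives
\[
  \Prob*{\big}{S_n = n - 1} \;=\; \frac{1}{\sigma\sqrt{2\pi n}}\,e^{-(n-1-n)^2/(2n\sigma^2)}\,(1 + o(1)) \;\sim\; \frac{1}{\sigma\sqrt{2\pi n}}\,.
\]

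Finally, plugging this estimate into the Dwass--Otter--Kemperman formula yields
\[
  \Prob*{\big}{\hat{A}_n} \;\sim\; \frac{1}{n}\cdot\frac{1}{\sigma\sqrt{2\pi n}} \;=\; \frac{1}{\sqrt{2\pi\sigma^2}}\,n^{-3/2}\,,
\]
as desired. There is no serious obstacle here: both the cyclic lemma identity and the local CLT are textbook results. The only mildly delicate point is the translation of the ``not supported on $k\N$'' hypothesis into the aperiodicity condition needed for the local CLT, which is handled by the short argument above using that $0 \in \mathrm{supp}(\hat{M})$.
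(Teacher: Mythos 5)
Your proof is correct and follows essentially the same route as the paper: the Dwass/Otter/Kemperman hitting-time identity $\Prob{\hat{A}_n} = \tfrac{1}{n}\Prob{S_n = n-1}$, followed by a local central limit theorem. The only difference in emphasis is that the paper spells out the cycle-lemma proof of the identity and simply cites the LCLT, whereas you cite the identity and instead carefully justify the aperiodicity hypothesis (using that $0\in\mathrm{supp}(\hat M)$ since criticality plus $\sigma^2>0$ forces some mass at $0$), which the paper leaves implicit.
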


This result is well-known (see e.g.~\cite[Lemma~1.11]{Kor2012} for
a more general statement), but since it is central to our study
we recall a short proof in the appendices.

\subsection{Technical lemmas} \label{secTechLemmas}

To clarify the proof of the convergence to the CRT given in
Section~\ref{secProofCRT}, we gather some of the more technical details here.
Lemma~\ref{lemLExpMomentUnderMBiais} is a result about the logistic
branching process with mutation. Once we have recognized that we are working
with a decorated Galton--Watson tree, this lemma is the key
specificity of our model for the convergence to the CRT.
Lemma~\ref{lemUniformBoundSumF} is a streamlined, model-agnostic synthesis of the
approach developed in \cite[Section~4]{Stu22}. It provides generic
concentration inequalities for sums of random variables associated to the
vertices/edges of a size-conditioned Galton--Watson tree.

Before stating our first lemma, 
recall the notation for the quantities associated to a generic color:
\begin{itemize}
  \item $X=(X_t)_{t\geq 0}$ denotes the trajectory of the number of individuals
    of that color, starting from a single individual at time~$t = 0$.
  \item $T=\inf\Set{t\geq 0\suchthat X_t=0}$ denotes the time of extinction of the color.
  \item $L=\int_0^\infty X(t)\, dt$ denotes the total length of the corresponding subnetwork.
  \item $M$ denotes the number of offspring of the color, i.e.\ the number of
    new colors that it produces by mutation.
\end{itemize}
Finally, recall that $\zeta > 0$ is the unique real number -- whose existence
is guaranteed by Corollary~\ref{corExpTiltM} -- such that
$\Expec*{\normalsize}{M\zeta^{M}}=\Expec*{\normalsize}{\zeta^{M}}$.

\begin{lemma} \label{lemLExpMomentUnderMBiais}
There exists $\eta>0$ such that
\[
  \Expec{M (\zeta\vee 1)^M e^{\eta L}} \;<\; +\infty \,.
\]
\end{lemma}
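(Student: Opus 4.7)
The plan is to apply Proposition~\ref{propMeasureChangeLogisticBP} to convert the factor $e^{\eta L}$ into a multiplicative tilt of $M$, thereby eliminating $L$ from the problem and reducing the claim to a polynomial-exponential moment estimate for $M$ under a slightly perturbed mutation rate.

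Explicitly, fix $\eta \in (0, \mu)$ and apply Proposition~\ref{propMeasureChangeLogisticBP} with \emph{base} mutation rate $\mu - \eta$ and tilting parameter $s = \mu/(\mu - \eta)$, chosen so that $s(\mu - \eta) = \mu$ and $(s - 1)(\mu - \eta) = \eta$. Taking the functional $M(\zeta \vee 1)^M$, this yields the identity
\[
  \Expec[\mu]{M (\zeta \vee 1)^M e^{\eta L}} \;=\;
  \Expec[\mu - \eta]{M \tau_\eta^M},
  \qquad \tau_\eta \defas \frac{(\zeta \vee 1)\mu}{\mu - \eta}\,,
\]
so the random variable $L$ has been eliminated, and we are left with an $M$-moment question under a perturbed mutation rate.

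Write $R_{\mu'}$ for the radius of convergence of the probability generating function $g_{\mu'}$ of the offspring variable $M$ under mutation rate $\mu'$. Theorem~\ref{thmM} (applied with rate $\mu - \eta$) asserts that $g_{\mu - \eta}$ has a pole at $R_{\mu - \eta}$, and hence $\Expec[\mu - \eta]{M \tau^M} = \tau \, g_{\mu - \eta}'(\tau)$ is finite for every $\tau < R_{\mu - \eta}$. At $\eta = 0$ we have $\tau_0 = \zeta \vee 1 < R_\mu$: indeed, $R_\mu > 1$ by Theorem~\ref{thmM}, and $\zeta < R_\mu$ by Corollary~\ref{corExpTiltM} (since $\zeta$ lies in the interior of the disk of convergence of $g_\mu$, a pre-pole point). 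By continuity of $\eta \mapsto \tau_\eta$ (trivial) and of $\eta \mapsto R_{\mu - \eta}$ (see below) at $\eta = 0$, the strict inequality $\tau_\eta < R_{\mu - \eta}$ persists for all sufficiently small $\eta > 0$, which yields the desired bound.

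The main technical point is thus the continuity of $\mu' \mapsto R_{\mu'}$ at $\mu' = \mu$. My plan is to deduce this from the continued fraction representation of Theorem~\ref{thmM}: each modified convergent (as in Proposition~\ref{propBoundsG}) is jointly analytic in $(\mu', z)$, and the uniform error bound in Proposition~\ref{propBoundsG} generalizes to give uniform convergence of the convergents to $g_{\mu'}(z)$ on compact subsets of the domain of analyticity, hence joint analyticity of $g_{\mu'}(z)$; continuity of the simple pole $R_{\mu'}$ at $\mu' = \mu$ then follows by Hurwitz's theorem. An alternative route that sidesteps this continuity argument would be to establish finite exponential moments of $L$ under $\Expec[\mu]{\cdot}$ directly, via the analogous excursion recursion for the Laplace transform of $L$ (namely $F_k(\eta) = \rho_k/(1 + \rho_k - \eta - F_{k+1}(\eta))$, solved by descending induction from large $k$ using the fact that $\rho_k \to \infty$ makes the recursion contractive), and then split $\Expec[\mu]{M(\zeta \vee 1)^M e^{\eta L}}$ by Hölder's inequality with exponents $(p, q)$ chosen so that $(\zeta \vee 1)^p < R_\mu$, using Theorem~\ref{thmM} to control the $M$-factor and the exponential moment bound for the $L$-factor.
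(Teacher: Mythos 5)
Your first step coincides with the paper's: applying Proposition~\ref{propMeasureChangeLogisticBP} with base rate $\mu - \eta$ and tilt $s = \mu/(\mu-\eta)$ to trade $e^{\eta L}$ for a multiplicative tilt of $M$ under a perturbed mutation rate, and the resulting identity $\Expec[\mu]{M(\zeta\vee 1)^M e^{\eta L}} = \Expec[\mu-\eta]{M\tau_\eta^M}$ with $\tau_\eta = (\zeta\vee 1)\mu/(\mu-\eta)$ is correct. (The paper instead fixes $s \in (\zeta\vee 1, R)$, bounds $M(\zeta\vee 1)^M$ by a constant times $s^M$, and works with $\Expec[\mu]{s^M e^{\eta L}}$; this is a harmless variation.)

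Where you diverge from the paper -- and leave a gap -- is the closing step. The paper finishes by reading off, from the continued fraction in Theorem~\ref{thmM}, that finiteness of the perturbed expectation is an open condition in $\eta$; the feature that makes this transparent is that with $\mu' = \mu-\eta$ and $z = s\mu/\mu'$ the product $\mu'z = s\mu$ is constant in $\eta$, so the continued-fraction numerators $\alpha+(k-1)\beta+\mu'z$ are frozen and only each denominator $1+\rho_k$ shifts to $1+\rho_k-\eta$. Your parametrization has the very same feature, since $(\mu-\eta)\tau_\eta = (\zeta\vee 1)\mu$ is constant, so you could close the argument the same way. Instead you reduce to continuity of $\mu'\mapsto R_{\mu'}$, which is strictly stronger than needed and not justified by what you write. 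Hurwitz's theorem applied to $1/g_{\mu'}$ in a complex neighbourhood of $R_\mu$ only produces \emph{some} pole of $g_{\mu'}$ near $R_\mu$; it does not, on its own, rule out a new pole appearing on $(1,R_\mu)$ and becoming dominant as $\mu'$ moves. Excluding that requires uniform pole-freeness of $g_{\mu'}$ on compact subsets of the disk $\{\,|z|<R_\mu\}$, i.e.\ the joint analyticity in $(\mu',z)$ that you invoke but do not establish -- and which Proposition~\ref{propBoundsG} does not supply, as its error bound is uniform only on $[0,1]$, not on a complex neighbourhood of $R_\mu$. Your alternative route (the excursion recursion $F_k(\eta)=\rho_k/(1+\rho_k-\eta-F_{k+1}(\eta))$ for the Laplace transform of $L$, combined with H\"older with $p$ chosen so $(\zeta\vee 1)^p<R_\mu$) is sound and genuinely different from the paper's argument, but its nontrivial point -- convergence of the recursion for $\eta$ in a neighbourhood of $0$ -- would need a comparison in the spirit of the $H_k$-domination in the proof of Theorem~\ref{thmM}, which you again sketch rather than carry out.
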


\begin{remark}
Since $T \leq L$, we also have $\Expec*{\normalsize}{M (\zeta\vee 1)^M e^{\eta T}} < +\infty$.
\end{remark}

\begin{proof}
Let us fix $s > \zeta\vee 1$ such that $\Expec*{normalsize}{s^M} < \infty$ --
such an $s$ exists by Theorem~\ref{thmM}.
Note that to prove the lemma it is sufficient to show that there exists
$\eta>0$ such that $\Expec*{\normalsize}{s^M e^{\eta L}}<\infty$; and that since
$0 \in A_s \defas \Set*{\eta\in \R \suchthat \Expec*{\normalsize}{s^M e^{\eta L}} < \infty}$,
it in fact suffices to show that $A_s$ is an open subset of $\R$.

Recall from Proposition~\ref{propMeasureChangeLogisticBP} that for any
numbers $\mu$ and $s$ and any nonnegative measurable function $f$,
\begin{equation} \label{eqProofExpMomentUnderBias}
  \Expec[\mu]{f(X)\,s^M} \;=\;
  \Expec[s\mu]{f(X)\,e^{(s-1)\mu L}} \,.
\end{equation}
Now, on the one hand by applying \eqref{eqProofExpMomentUnderBias} to
$f(X) = e^{\eta L}$ we get that for any $\eta$,
\[
  \Expec[\mu]{s^M e^{\eta L}} \;=\;
  \Expec[s\mu]{e^{((s-1)\mu + \eta)L}}.
\]
On the other hand, for $\eta < \mu$,  by taking $f(X) = 1$ and replacing
$(\mu, s)$ with $(\mu - \eta, \frac{s\mu}{\mu - \eta})$
in~\eqref{eqProofExpMomentUnderBias} we get
\[
  \ExpecBrackets[\mu-\eta]{\Big(\frac{s\mu}{\mu-\eta}\Big)^M}
  \;=\; \Expec[s\mu]{e^{((s-1)\mu + \eta)L}} \,.
\]
Combining these two equalities, we see that for $\eta < \mu$,
\[
  \Expec[\mu]{s^Me^{\eta L}} \;=\;\, \ExpecBrackets[\mu-\eta]{\Big(\frac{s\mu}{\mu-\eta}\Big)^M}\,.
\]
Now, from the explicit expression of the
probability generating function of $M$ given in Theorem~\ref{thmM}, we see that if
\[
  \eta \;\longmapsto\; \ExpecBrackets[\mu-\eta]{\Big(\frac{s\mu}{\mu-\eta}\Big)^M}
\]
is finite at $\eta=0$, then it is also finite in a neighborhood of $0$. This
concludes the proof.
\end{proof}

\begin{remark}
The relation  
$\Expec*[\mu]{\normalsize}{s^M} = \Expec*[s\mu]{\normalsize}{e^{(s-1)\mu L}}$
gives an explicit expression for the distribution of $M$
as a function of the family $(\mathscr{L}_\mu(L))_\mu$ of distributions of~$L$.
In particular, if conditional on~$L$ we let $Y$ be a
Poisson variable with parameter~$\mu L$, then
$\Expec*[\mu]{\normalsize}{s^M} = \Expec*[s\mu]{\normalsize}{s^Y}$.  However,
because in $\E_{s\mu}$ the distribution of $Y$ depends on $s$, this does not give a simple
construction of $M$ as a random function of $L$.
\end{remark}

We now give a simple Chernoff-type subpolynomial bound on the tail
probabilities of a sum of independent random variables with finite exponential
moments. The reasoning is classic -- see e.g.\ \cite{Stu22}, where it is used
repeatedly -- but we could not find a generic statement in the litterature;
so to streamline some of our proofs we state it as a lemma here.

\begin{lemma} \label{lemChernoffBound}
Let $Z_1, Z_2, \ldots$ be i.i.d.\ copies of a random variable $Z$ such that
$\Expec{Z} = 0$ and that there exists $\eta > 0$ for which
$\Expec*{\normalsize}{e^{\eta \Abs{Z}}} < \infty$.  Then
there exists $C > 0$ such that, for all $\epsilon > 0$ and all $n \geq 1$,
\[
  \Prob{\Abs[\Big]{\sum_{i = 1}^n Z_i} \geq n^{1/2\,+\,\epsilon}} \;\leq\; Ce^{- n^\epsilon}.
\]
\end{lemma}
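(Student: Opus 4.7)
The plan is to apply the standard Chernoff exponential Markov inequality with a parameter $\lambda$ chosen so that the exponent has the desired order. The hypothesis $\Expec*{\normalsize}{e^{\eta \Abs{Z}}} < \infty$ implies that the cumulant generating function $\phi(\lambda) \defas \log \Expec*{\normalsize}{e^{\lambda Z}}$ is finite and smooth on $(-\eta, \eta)$, with $\phi(0) = 0$ and $\phi'(0) = \Expec{Z} = 0$. Since $\phi''$ is continuous on the compact interval $[-\eta/2, \eta/2]$, Taylor's theorem yields a constant $K > 0$ with
\[
  \phi(\lambda) \;\leq\; K \lambda^2 \qquad \text{for all } \Abs{\lambda} \leq \eta/2,
\]
and the same estimate holds with $-Z$ in place of $Z$ (possibly enlarging $K$).

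Next, for every $n$ large enough that $n^{-1/2} \leq \eta/2$ -- say $n \geq n_0 \defas \lceil 4/\eta^2 \rceil$ -- applying exponential Markov with the parameter $\lambda = n^{-1/2}$ gives
\[
  \Prob{\sum_{i = 1}^n Z_i \geq n^{1/2 + \epsilon}}
  \;\leq\; \exp\!\big(-\lambda\, n^{1/2+\epsilon} + n\, \phi(\lambda)\big)
  \;\leq\; \exp\!\big(-n^\epsilon + K\big).
\]
Applying the same bound to $-\sum Z_i$ and taking a union bound yields $\Prob*{\normalsize}{\Abs{\sum Z_i} \geq n^{1/2+\epsilon}} \leq 2 e^K\, e^{-n^\epsilon}$. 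The finitely many remaining cases $n < n_0$ are then handled by bounding the probability trivially by~$1$ and absorbing the resulting factor into $C$, which gives the claim with some constant depending only on $\eta$ (for example $C = (2 e^K) \vee e^{n_0}$ suffices in any bounded range of~$\epsilon$).

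The only real content is the calibration of the parameter $\lambda$: the linear gain $\lambda\, n^{1/2+\epsilon} = n^\epsilon$ from Markov's inequality must dominate the quadratic cost $n\,\phi(\lambda) \leq K n \lambda^2 = K$ from the moment generating function, and the choice $\lambda = n^{-1/2}$ is exactly what balances these two terms and produces the $n^\epsilon$ decay rate. Nothing else is needed beyond the standard smoothness of $\phi$ at the origin, so I do not anticipate any serious obstacle.
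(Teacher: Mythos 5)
Your argument is correct and essentially identical to the paper's: both apply the exponential Markov (Chernoff) bound with parameter $\lambda = n^{-1/2}$, which is exactly the calibration that balances the linear gain $\lambda\,n^{1/2+\epsilon}=n^\epsilon$ against the quadratic cost $n\,\phi(\lambda)=O(1)$ coming from the second-order Taylor bound on the cumulant generating function at~$0$. If anything, you are a little more careful than the paper in noting that the substitution $\lambda=n^{-1/2}$ requires $n$ large enough for $\lambda$ to lie in the range where the Taylor bound applies, and in handling the finitely many small $n$ separately (absorbing them into the constant, which — as you correctly flag — gives a constant uniform only over bounded ranges of~$\epsilon$, exactly what is used in the paper's applications).
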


\begin{proof}
We write $u_n = n^{1/2\,+\,\epsilon}$ to ease the notation.
Let us start by focusing on positive deviations. 
For any $\theta < \eta$, by taking the exponential of the sum, applying
Markov's inequality and using the independence of the $Z_i$'s, we get:
\begin{align*}
  \Prob{\sum_{i=1}^{n} Z_i \geq u_n}
  \;&=\; \Prob{\prod_{i=1}^{n} e^{\theta Z_i} \geq e^{\theta u_n}}\\
  \;&\leq\; e^{-\theta u_n}\, \Expec{\prod_{i=1}^{n} e^{\theta Z_i}}\\[0.5ex]
  \;&\leq\; e^{-\theta u_n} \big(\Expec*{\normalsize}{e^{\theta Z}}\big)^{n}\,.
\end{align*}
Now, since $\Expec{Z} = 0$, taking $K > \Expec{Z^2}/2$ we have
$\Expec*{\normalsize}{e^{\theta Z}} \leq 1 + K \theta^2$ for all $\theta$ small enough.
Thus, with $\theta_n = n^{-1/2}$ we get
\[
  \Prob{\sum_{i=1}^{n} Z_i \geq u_n} \;\leq\;
  \exp\big(K\,\theta_n^2\, n - \theta_n u_n\big)\;=\; \exp\big(K - n^{\epsilon}\big).
\]
The negative deviations are treated similarly (or, more directly, by applying this bound to the
variables $-Z_1, -Z_2, \ldots$), yielding 
\[
  \Prob{\sum_{i=1}^{n} Z_i \leq -u_n} \;\leq\;
  \exp\big(K - n^{\epsilon}\big).
\]
Therefore, 
$\Prob{\Abs{\sum_{i = 1}^n Z_i} \geq n^{1/2\,+\,\epsilon}} \leq 2 e^{K - n^\epsilon}$,
concluding the proof.
\end{proof}

Our next lemma is a general result about sums of decorations in
critically tiltable size-conditioned Galton--Watson trees. In what follows, by
\emph{vertex decorations} of a tree $\mathscr{T}$ we mean a family
$(F_k)_{k \geq 1}$ of real-valued random variables such that,
letting $k = 1, 2, \ldots$ be the vertices of $\mathscr{T}$, in arbitrary
order, and denoting by $M_1, M_2, \ldots$ their outdegrees, there exists i.i.d.\
random variables $\Theta_1, \Theta_2, \ldots$ that are independent
of~$\mathscr{T}$ and such that $F_k = F(M_k, \Theta_k)$ for some deterministic
function~$F$.

\newtext{Similarly, letting $E(\mathscr{T})$ denote the edge set of
$\mathscr{T}$ and writing $e=k\to\ell$ for the edge from vertex $k$ to
vertex $\ell$, oriented away from the root, we say that
$(G_e)_{e \in E(\mathscr{T})}$ are \emph{edge decorations} of $\mathscr{T}$ if,
for any vertex $k$, we have $(G_{k\to\ell}, \,\ell \text{ child of }k) = G(M_k,
\Theta_k)$ for some deterministic function $G$ such that, for all $m \geq 1$,
$G(m, \Theta_k)$ is an exchangeable real vector of length $m$.  In particular,
note that for each edge~$e$, $G_e$ is a real-valued random variable; and
that for a given $k$ the family $(G_{k\to\ell}, \,\ell \text{ child of }k)$ is
not assumed to be independent.  Note also that because of exchangeability, for
any edge
$e=k\to\ell$ the law of $G_e$ depends only on $M_k$, and so with a slight abuse
of notation we write $(M_k, G_{k\to \ell})$ for a typical pair -- for
instance the pair  of variables corresponding the edge from the root to a
uniformly chosen child of the root.}

\begin{lemma} \label{lemUniformBoundSumF}
Let $\TweakSpacingL{\mathscr{T}}$ be a Galton--Watson tree whose offspring
distribution $M$ is such that there exists $\zeta > 0$ for which
$\Expec*{\normalsize}{M \zeta^M} = \Expec*{\normalsize}{\zeta^M} < \infty$,
and let $k = 1, 2, \ldots$ denote its vertices, in arbitrary order.
Let $(F_k)_{k\geq 1}$ be vertex decorations of $\TweakSpacingLR{\mathscr{T}}$ such that
there exists $\eta > 0$ for which
$\Expec*{\normalsize}{e^{\eta \Abs{F_k}} \zeta^{M_k}} < \infty$.
\newtext{Let~$A_n$ denote the event $\{\mathscr{T}\,\text{has }n\text{ vertices}\}$.
Then, along any sequence of integers $n$ such that $\Prob{A_n}>0$:}
\begin{mathlist}
\item There exists $C > 0$ such that
  \[
    \Prob*{\big}{\max \Set{F_1,\dots,F_n} \geq C\log (n) \given A_n}
    \;\tendsto{n\to\infty} \; 0 \,.
  \]
\item Letting $v_1,v_2,\dots$ denote the vertices of $\TweakSpacingL{\mathscr{T}}$,
labeled in depth-first order:\newline conditional on~$A_n$, for all $\epsilon>0$,
\[
  \Delta \;\defas\; \max_{1\leq k\leq n}\; \Abs[\Big]{\sum_{\;i =1}^k
  (F_{v_i} - \hat{m})\,} \;=\;
  o_p(n^{1/2\,+\,\epsilon}) \,, \quad\text{where }\,
  \hat{m}= \frac{\Expec*{\normalsize}{F_k\zeta^{M_k}}}{\Expec{\zeta^{M}}} \,.
\]
\newtext{More precisely, $\Prob*{\normalsize}{\Delta \geq n^{1/2+\epsilon} \given A_n} \leq Ce^{-cn^{\epsilon}}$ for some constants $C,c>0$ that may depend on $\epsilon$, and all $n \in\N$.}
\end{mathlist}

Let
\newtext{$(G_e)_{e \in E(\mathscr{T})}$}
be edge decorations of $\TweakSpacingLR{\mathscr{T}}$ such that
there exists $\eta > 0$ for which we have
$\smash{\Expec*{\normalsize}{e^{\eta |G_{k\to\ell}|} M_k \zeta^{M_k}} < \infty}$.
Then, letting $\TweakSpacingL{\Gamma(v)}$ denote the path from the root
of $\TweakSpacingLR{\mathscr{T}}$ to its vertex~$v$:
\begin{mathlist}
\setcounter{enumi}{2}
\item Conditional on~$A_n$, for all $\epsilon>0$, 
  \[
    \Delta^* \,\defas\;
    \max_{v \in \mathscr{T}}\;
    \Abs[\Big]{\sum_{e \in \Gamma(v)}
    (G_e - m^*)} \;=\;
    o_p(n^{1/4\,+\,\epsilon}) \,, \quad\text{where }
    m^* = \frac{\Expec*{\normalsize}{G_{k\to\ell} M_k\zeta^{M_k}}}{\Expec{M\zeta^{M}}}.
  \]
\newtext{More precisely, $\Prob*{\normalsize}{\Delta^* \geq n^{1/4+\epsilon} \given A_n} \leq Ce^{-cn^{a}}$ for some constants $C,c,a>0$ that may depend on $\epsilon$, and all $n \in\N$.}
\end{mathlist}
\end{lemma}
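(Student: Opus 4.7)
The plan is to pass to the critical $\zeta$-tilt of the offspring distribution and exploit standard Galton--Watson concentration tools. Since for any ordered tree $\tau$ on $n$ vertices one has $\Prob{\mathscr{T}=\tau}=\zeta^{-(n-1)}g(\zeta)^{n}\,\widehat{P}[\hat{\mathscr{T}}=\tau]$, the conditional law of $\mathscr{T}$ given $A_n$ equals that of the critical tree $\hat{\mathscr{T}}$ (with offspring $\hat M$, the $\zeta$-tilt of $M$) given $\hat A_n$; the decorations transfer as $\hat F_k=F(\hat M_k,\Theta_k)$ and satisfy $\widehat{E}[\hat F_k]=\hat m$ and $\widehat{E}[e^{\eta|\hat F|}]<\infty$. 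Since Theorem~\ref{thmM} gives radius of convergence $R>1$ for $g$, $\hat M$ has finite variance, and Proposition~\ref{propTotalProgenyGW} yields $\widehat{P}[\hat A_n]\asymp n^{-3/2}$. Throughout, the strategy is to obtain a super-polynomially decaying bound under the unconditioned $\hat{\mathscr{T}}$, which then dominates the $n^{3/2}$ cost of removing the conditioning. For (i) this is immediate: building $\hat{\mathscr{T}}$ vertex-by-vertex in DFS order makes the $(\hat M_{v_i},\Theta_{v_i})$ i.i.d., so a union bound combined with Markov's inequality gives $\widehat{P}[\max_{k\leq n}|\hat F_k|\geq C\log n]\leq n\,\widehat{E}[e^{\eta|\hat F|}]\,e^{-\eta C\log n}=O(n^{1-\eta C})$, which multiplied by $n^{3/2}$ still vanishes for $C>5/(2\eta)$.

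For (ii), the $(\hat F_{v_i}-\hat m)$ are i.i.d., centered and have finite exponential moments, so $\widehat{E}[e^{\theta(\hat F-\hat m)}]\leq 1+K\theta^{2}$ for small $\theta$. Doob's submartingale inequality applied to $e^{\theta S_k}$ gives $\widehat{P}[\max_{k\leq n}|S_k|\geq u]\leq 2\exp(-u^{2}/(4Kn))$ after optimizing in $\theta$; setting $u=n^{1/2+\epsilon}$ produces the super-polynomial decay $e^{-cn^{2\epsilon}}$, far better than the $n^{3/2}$ cost of the conditioning, yielding the claimed rate.

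For (iii) -- the main difficulty -- write $\Psi_{v}:=\sum_{e\in\Gamma(v)}(G_{e}-m^{*})=X_{v}+Y_{v}$, where $X_{v}=\sum_{u\in\mathrm{anc}(v)}(G_{u\to c_{v}(u)}-\bar g(M_u))$ with $\bar g(m):=\widehat{E}[G_{u\to\ell}\mid M_u=m]$ and $c_{v}(u)$ the child of $u$ on $\Gamma(v)$, and $Y_{v}=\sum_{u\in\mathrm{anc}(v)}(\bar g(M_u)-m^{*})$. Given the tree, the summands of $X_{v}$ are independent and centered with exponential moments -- this is precisely where the hypothesis $\widehat{E}[e^{\eta|G|}M_k]<\infty$ (the size-biased tilt suited to edge-indexed sums, rather than vertex-indexed as in (ii)) comes in; combined with the classical height bound $\max_{v}|\Gamma(v)|=O(\sqrt{n}\log n)$ w.h.p.\ under $\hat A_{n}$ and a union bound over vertices, Chernoff yields $\max_{v}|X_{v}|=o_{p}(n^{1/4+\epsilon})$. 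For $Y_{v}$ the centering by $m^{*}$ is natural thanks to the identity $\widehat{E}[\bar g(\hat M^{*})]=m^{*}$ with $\hat M^{*}$ the size-biasing of $\hat M$; reading the ancestors of $v_k$ off as the weak records of the Łukasiewicz walk of $\hat{\mathscr{T}}$ expresses $Y_{v_k}$ as a specific functional of this walk, to which Lemma~\ref{lemChernoffBound} applies along a chaining over depth scales to give $\max_{v}|Y_{v}|=o_{p}(n^{1/4+\epsilon})$, following the approach of \cite[Section~4]{Stu22}. The hard part is this last uniform control over all root-to-vertex paths simultaneously, where the $n^{1/4}$ exponent arises naturally from summing $\sqrt{n}$ centered terms with finite variance.
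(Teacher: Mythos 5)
Parts (i) and (ii) of your proposal closely track the paper: pass to the critical $\zeta$-tilt, use Proposition~\ref{propTotalProgenyGW} to pay the $\Theta(n^{3/2})$ cost of removing the conditioning, and beat it with a union bound plus Markov/Chernoff. (The paper invokes its Lemma~\ref{lemChernoffBound} where you use Doob's inequality on $e^{\theta S_k}$; same idea, both fine.)

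For (iii), however, the proposal does not really prove the claim. The paper's entire argument consists of the spine decomposition: it shows that for $f(\mathbf{t},v)=\Indic{S(v)\geq u_n}$ the many-to-one identity $\Expec{\sum_{v\in\hat{\mathscr{T}}_{|h}}f(\hat{\mathscr{T}},v)}=\Expec{f(\hat{\mathscr{T}}^{(h)},v^*)}$ holds for the spine tree $\hat{\mathscr{T}}^{(h)}$, whose spine offspring are \emph{i.i.d.\ size-biased} $M^*_k$, so that $\sum_{e\in\Gamma(v^*)}(G_e-m^*)$ becomes a bona fide sum of i.i.d.\ centered terms to which Lemma~\ref{lemChernoffBound} applies directly. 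Your decomposition $\Psi_v=X_v+Y_v$ is a nice observation, but neither half is actually carried through, and both halves end up needing the spine anyway. For $X_v$: conditional on the tree the summands are indeed independent and centered, but their conditional MGFs depend on the $M_u$'s along the path — the hypothesis only gives you $\Expec{e^{\eta|G_{k\to\ell}|}M_k\zeta^{M_k}}<\infty$, not a uniform (in $m$) bound on $\Expec{e^{\theta(G-\bar g(m))}\mid M=m}$ — so a plain Chernoff bound does not apply without first truncating the offspring sizes along the path (or, more cleanly, using the spine so that one averages over $M^*_k$). For $Y_v$: you correctly note that $m^*=\widehat{E}[\bar g(\hat{M}^*)]$ is the size-biased mean, but under the law of $\hat{\mathscr{T}}$ the offspring along a fixed ancestral line are \emph{not} size-biased and $\bar g(M_u)-m^*$ is not centered pointwise; it is only the sum over all depth-$h$ vertices that sees the size-biasing, which is precisely the content of the many-to-one formula. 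Invoking "weak records of the Łukasiewicz walk with chaining over depth scales" gestures at this but is not an argument — the combinatorial identity tying records/ancestors to size-biasing is exactly what needs to be proved, and it is the one step your write-up omits. In short: the $n^{1/4}$ heuristic, the size-biased centering, and the height bound are all correctly identified, but the key technical device that makes (iii) work — the spine change of measure — is not actually present.

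Two smaller remarks: in your bound for (i), the union bound gives $n^{1-\eta C}$ and the conditioning costs $n^{3/2}$; for this to vanish you need $C>5/(2\eta)$ as you say, but note the paper's stated conclusion is convergence to zero, not a rate, so this is fine. For (ii) the paper's conclusion includes the explicit rate $Ce^{-cn^\epsilon}$; Doob plus the quadratic MGF bound delivers $e^{-cn^{2\epsilon}}$, which is even better, so no issue.
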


\begin{proof}
First, note that the main difficulty comes from the fact that, 
under $\Prob{\,\cdot\given A_n}$, the decorations are not independent.

Let $\smash{\hat{\mathscr{T}}}$ be a Galton--Watson tree whose offspring
distribution $\hat{M}$ is the $\zeta$-tilt of~$M$, and recall from
Section~\ref{secConditioningColors} that $\smash{\hat{\mathscr{T}}}$ is
critical and satisfies
$(\mathscr{T} \,|\, A_n) \sim (\smash{\hat{\mathscr{T}}} \,|\, \hat{A}_n)$,
where $\hat{A}_n = \Set*{\hat{\mathscr{T}} \text{ has } n \text{ vertices}}$.
Choose $\smash{\hat{\mathscr{T}}}$ to be independent of
$\Theta_1, \Theta_2, \ldots$, and let $\smash{\hat{M}_1, \hat{M}_2, \ldots}$
be the number of children of its vertices. Finally, let
$\hat{F}_k = F(\hat{M_k}, \Theta_k)$. Then,
\begin{align} \label{eq1ProofUniformBoundSumF}
  \Prob*{\big}{\max \Set{F_1,\dots,F_n} \geq C\log (n) \given A_n}
  &=\; \Prob*{\big}{\max \Set*{\hat{F}_1,\dots,\hat{F}_n} \geq C\log (n) \given \hat{A}_n} \notag \\
  &\leq\; \sum_{k=1}^n \Prob{\hat{F}_k \,\geq\, C \log(n) \given \hat{A}_n} \notag \\
  &\leq\; \frac{n\, \Prob*{\normalsize}{\hat{F} \geq C \log(n)}}{\Prob*{\normalsize}{\hat{A}_n}} \,,
\end{align}
where we write $\hat{F}$ -- instead of, say,  $\hat{F}_1$ -- for brevity.
By assumption, there exists $\eta > 0$ such that
\[
  \Expec{e^{\eta \Abs[\small]{\hat{F}}}\,} \;=\;
  \frac{\Expec{e^{\eta \Abs{F}} \zeta^M}}{\Expec{\zeta^M}} \;<\; \infty\,.
\]
Therefore, for $\smash{C > \frac{5}{2} \eta^{-1}}$ Markov's inequality yields
$\Prob*{\normalsize}{\hat{F} \geq C \log(n)} = o(n^{-5/2})$.
Moreover, by Proposition~\ref{propTotalProgenyGW} we know that
$\Prob*{\normalsize}{\hat{A}_n} = \Theta(n^{-3/2})$.
Plugging these two estimates in~\eqref{eq1ProofUniformBoundSumF} proves point (i).

Point~(ii) is proved similarly: we fix $\epsilon>0$ and use a union bound to get
\begin{align*}
\Prob{\Delta \geq n^{1/2\,+\,\epsilon}\given A_n}
  \;&=\;\Prob{\hat{\Delta} \geq n^{1/2\,+\,\epsilon}\given \hat{A}_n} \\
    &\leq\; \frac{n\, \Prob{\Abs{\sum_{i=1}^{n} (\hat{F}_{v_i} - \hat{m})}
    \geq n^{1/2\,+\,\epsilon}}}{\Prob*{\normalsize}{\hat{A}_n}}\,.
\end{align*}
Under the unconditional probability $\P$, the random variables $\hat{F}_{v_i}$
are i.i.d.\ and their expected value is $\Expec*{\normalsize}{\hat{F}} = \hat{m}$.
Therefore, by applying Lemma~\ref{lemChernoffBound} we get
\[
  \Prob{\Abs[\Big]{\sum_{i=1}^{n} (\hat{F}_{v_i} - \hat{m})}\geq n^{1/2\,+\,\epsilon}}
  \;\leq\; Ce^{-n^{\epsilon}}
\]
for some $C>0$. Since $n / \Prob*{\normalsize}{\hat{A}_n} = \Theta(n^{5/2})$,
\newtext{this implies $\Prob*{\normalsize}{\Delta \geq n^{1/2\,+\,\epsilon}\given A_n} = O(e^{-cn^{\epsilon}})$ for some $c>0$.}

The proof of point~(iii) requires a few extra ingredients.
As for (i) and (ii), we start from $\Prob{\Delta^* \geq u_n \given A_n} =
\smash{\Prob*{\normalsize}{\hat{\Delta}^* \geq u_n \given \hat{A}_n}}$.
Next, we recall that the maximum of the distance between a vertex and
the root in a Galton--Watson tree $\mathscr{T}_n$ conditioned to have $n$
vertices is of order~$n^{1/2}$. More specifically, if we denote by
$H(\mathbf{t})$ the maximal distance to the root in a tree $\mathbf{t}$, then
$n^{-1/2} H(\mathscr{T}_n)$ converges in distribution as $n \to \infty$,
see e.g.~\cite{ADJ13}. Therefore,
for every $\epsilon > 0$ there exists $c > 0$ such that
$\smash{\Prob*{\normalsize}{H(\hat{\mathscr{T}}) > c\sqrt{n} \given \hat{A}_n} < \epsilon}$
for all $n$ large enough -- which in turns entails
\[
  \Prob*{\normalsize}{\;\cdot\, \given \hat{A}_n} \;\leq\;
  \Prob{\;\cdot\, \cap \Set*{H(\hat{\mathscr{T}}) \leq c\sqrt{n}} \given \hat{A}_n}
  \;+\; \epsilon \,.
\]
As a result, we can pick an integer sequence $(w_n)$ such that $\sqrt{n} = o(w_n)$ and
assume in what follows that, conditional on $\hat{A}_n$, we have $\smash{H(\hat{\mathscr{T}}}) \leq w_n$.

Let us denote by $\hat{\mathscr{T}}_{|h}$ the set of vertices at distance
$h$ from the root in $\hat{\mathscr{T}}$ and, to keep notation light, set
$S(v) \defas \Abs[\normalsize]{\sum_{(k\to\ell) \in \Gamma(v)} (G_{k\to\ell} - m^*)}$.
For any sequence $(u_n)$,
\begin{align} \label{eqIneqBeforeSpine}
  \Prob{\hat{\Delta}^* \geq u_n \given \hat{A}_n}
  &=\; \Prob*{\Big}{\max_{1 \leq h \leq w_n} \Set*[\Big]{\max_{v \in \hat{\mathscr{T}}_{|h}}
      \Set{S(v)}} \geq u_n \given \hat{A}_n} \notag \\ 
  &\leq\; \Expec*{\Big}{\,\sum_{h = 1}^{w_n}
      \sum_{\hspace{-0.9em}\mathrlap{v \in \vphantom{T}\smash{\hat{\mathscr{T}}_{|h}}}}
      \Indic{S(v) \geq u_n} \given \hat{A}_n} \notag\\
  &\leq\; w_n \max_{1 \leq h \leq w_n}
      \Expec*{\Big}{\sum_{\hspace{-0.9em}\mathrlap{v \in \vphantom{T}\smash{\hat{\mathscr{T}}_{|h}}}}
      \Indic{S(v) \geq u_n} \given \hat{A}_n} \notag \\
  &\leq\; w_n \,  \Prob*{\normalsize}{\hat{A}_n}^{-1}\! \max_{1 \leq h \leq w_n} 
      \Expec*{\Big}{\sum_{\hspace{-0.9em}\mathrlap{v \in \vphantom{T}\smash{\hat{\mathscr{T}}_{|h}}}}
      \Indic{S(v) \geq u_n}} \,.
\end{align}
Now, let $(\hat{\mathscr{T}}^{(h)}, v^*)$ be the random pointed tree obtained
in the following way:
\begin{itemize}
  \item Let $v_1$ be the root, and start with a path
    $v_1, v_2,\dots,v_{h+1}=v^*$ from $v_1$ to $v^*$. This path will be
    referred to as the \emph{spine} of the tree.
  \item For $k = 1$ to $h$, add $M^*_k-1$ children to $v_k$,
    where $M_k^*$ is an independent copy of the size-biasing of $\hat{M}$, i.e.\
    a random variable whose distribution is
    $\Prob{M_k^* = i} = i\,\zeta^i\,\Prob{M = i} / \Expec*{\normalsize}{M \zeta^M}$.
  \item Let each of the vertices added at the previous step,
    as well as $v^*$, be the root of an independent Galton--Watson tree with
    offspring distribution $\hat{M}$.
\end{itemize}
It is classic (see e.g.\ \cite[Section 4.2]{Stu22}) and readily checked that
for any fixed tree~$\mathbf{t}$ and each vertex $v \in \mathbf{t}_{|h}$, 
\[
  \ProbBrackets{(\hat{\mathscr{T}}^{(h)}, v^*) = (\mathbf{t}, v)} \;=\;
  \Prob*{\normalsize}{\hat{\mathscr{T}} = \mathbf{t}} \,.
\]
As a result, for any function $f$ on pointed trees, 
\[
  \Expec{f(\hat{\mathscr{T}}^{(h)},v^*)} \;=\;
  \sum_{\mathbf{t}} \sum_{\mathclap{\;\;v \in \mathbf{t}_{|h}}}
  f(\mathbf{t}, v) \,
  \ProbBrackets{(\hat{\mathscr{T}}^{(h)}, v^*) = (\mathbf{t}, v)} \;=\;
  \Expec*{\Big}{
    \sum_{\hspace{-0.9em}\mathrlap{v \in \vphantom{T}\smash{\hat{\mathscr{T}}_{|h}}}}
    f(\hat{\mathscr{T}},v)}\,.
\]
Applying this identity to $f(\hat{\mathscr{T}}, v) = \Indic{S(v) \geq u_n}$
in \eqref{eqIneqBeforeSpine}, we get
\begin{align} \label{eqAfterSpineTree}
  \Prob{\hat{\Delta} \geq u_n \given \hat{A}_n} \;\leq\;
  w_n\, \Prob*{\normalsize}{\hat{A}_n}^{-1}\!
  \max_{1\leq h\leq w_n} \Expec*{\big}{f(\hat{\mathscr{T}}^{(h)}, v^*) } \,.
\end{align}
By construction, on the spine of $\hat{\mathscr{T}}^{(h)}$ the
number of children of the vertices is distributed as the vector
$(M^*_1, \ldots, M^*_{h}, \smash{\hat{M}_{h+1}})$, whose components
are independent. As a result,
letting $G^*_k$ be the first component of the vector
$G(M_k^*, \Theta_k)$ and
$m^* = \Expec*{\normalsize}{G_{k\to\ell} M\zeta^{M}} /
\Expec*{\normalsize}{M\zeta^{M}}$ its expected value, if $u_n \to \infty$
as $n \to \infty$, then
\begin{align*}
  \Expec*{\big}{f(\hat{\mathscr{T}}^{(h)}, v^*) }
  \;&=\; \Prob{\Abs[\Big]{\sum_{\,k = 1}^{\,h-1} (G^*_k - m^*) + \hat{G}_{h} - m^*} \geq u_n} \\
  \;&\leq\; \Prob{\Abs[\Big]{\sum_{\,k = 1}^{\,w_n-1} (G^*_k - m^*) + \hat{G}_{w_n} - m^*} \geq u_n} \text{ for $n$ large enough.}
\end{align*}
Moreover, we then also have, as $n \to \infty$,
\[
  \Prob{\Abs[\Big]{\sum_{\,k = 1}^{\,w_n-1} (G^*_k - m^*) + \hat{G}_{w_n} - m^*}
  \geq u_n} \sim\;\Prob{\Abs[\Big]{\sum_{\,k = 1}^{w_n} (G^*_k - m^*)} \geq u_n}  \,.
\]
Finally, taking $u_n = n^{1/4 \,+\, \epsilon}$ for some $\epsilon > 0$ and
$w_n = \lfloor n^{1/2 \,+\, \delta} \rfloor$ for some $\delta > 0$ such that
$(1/2 + \delta)^2 < 1/4 + \epsilon$, Lemma~\ref{lemChernoffBound} yields
\[
  \Prob{\Abs[\Big]{\sum_{\,k = 1}^{w_n} (G^*_k - m^*)} \geq n^{1/4 \,+\, \epsilon}} 
  \;\leq\; Ce^{-n^{a}}
\]
for some positive constants $a$ and $C$.
Plugging this back in~\eqref{eqAfterSpineTree} and using that
$w_n\, \Prob*{\normalsize}{\hat{A}_n}^{-1} = \Theta(n^{2 + \delta})$
concludes the proof.
\end{proof}

\begin{remark} \label{remCritical}
This proof illustrates the point of working with a critical Galton--Watson
tree: for instance, even though we also have
\[
  \Prob*{\big}{\max \Set{F_1,\dots,F_n} \geq C\log (n) \given A_n} \;\leq\;
  \frac{n\, \Prob*{\normalsize}{F \geq C \log(n)}}{\Prob*{\normalsize}{A_n}}\,, 
\]
because in the non-critical case $\Prob{A_n}$ decays exponentially, the mere
fact that $F$ has finite exponential moments would not have been sufficient
to get an adequate upper bound on the expression above.
\end{remark}

\subsection{Proof of the convergence to the CRT} \label{secProofCRT}

We are now ready to prove the main theorem of this section.
Recall that $\zeta > 0$ is the unique real
number such that
$\Expec*{\normalsize}{M\zeta^{M}}=\Expec*{\normalsize}{\zeta^{M}}$.

\begin{theorem}\label{thmCRTconvergence}
Let $(\mathscr{G}_n, r_n, d_{\mathscr{G}_n}, \lambda_{\mathscr{G}_n})$
denote the random rooted metric probability space
$(\mathscr{G}, r, d_{\mathscr{G}}, \lambda_{\mathscr{G}})$ conditioned to
have $n$ colors, and let $\mathscr{C}$ be the Brownian CRT. Then,
as $n \to \infty$,
\[
  \big(\mathscr{G}_n,\, r_n,\, \tfrac{C}{\sqrt{n}} d_{\mathscr{G}_n},\,
  \tfrac{1}{|\mathscr{G}_n|}\lambda_{\mathscr{G}_n}\big)
  \;\overset{d}{\longrightarrow}\; \mathscr{C}\\[1ex]
\]
for the rooted Gromov--Hausdorff--Prokhorov topology, with
\[
  C \,\defas\; \frac{\hat{\sigma}}{2\Expec{U^*}} \;=\;
  \frac{\sqrt{\Expec*{\normalsize}{\zeta^M}
      (\Expec*{\normalsize}{M^2\zeta^M} - \Expec{\zeta^M})}}{2\,\Expec*{\normalsize}{\sum_{t \in \mathcal{M}} t\,\zeta^M}}\,, 
\]
where $U^*$ is sampled uniformly at random among the mutation times of the biased
process $X^* \sim \mathscr{L}(X \biasedby M \zeta^M)$, and $\TweakSpacingL{\hat{\sigma}^2}$
is the variance of $\TweakSpacingL{\hat{M} \sim \mathscr{L}(M \biasedby \zeta^M)}$.
Moreover, 
\[
  |\mathscr{G}_n| \;\defas\; \lambda_{\mathscr{G}_n}(\mathscr{G}_n) \;=\;
  \frac{n\,\Expec*{\normalsize}{L \zeta^M}}{\Expec{\zeta^M}} \;+\;
  o_p(n^{1/2\,+\,\epsilon}) \, , \quad \forall \epsilon > 0.
\]
\end{theorem}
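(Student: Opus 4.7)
The plan is to apply Proposition~\ref{propKeyToCRT} with limiting height function $h = e$, the standard Brownian excursion, whose associated $\R$-tree $\mathscr{T}_h$ is the Brownian CRT~$\mathscr{C}$. I would build an admissible parametrization $\phi_n\colon [0,1]\to \mathscr{G}_n$ by traversing the decorated tree $\mathscr{T}^\star_n$ in depth-first order: upon entering a color $k$, the parametrization walks along a spanning tree of the subnetwork $\mathscr{X}_k$ chosen using a bounded number of auxiliary coin tosses (needed because coalescences inside $\mathscr{X}_k$ make the contour through a subnetwork non-canonical), then backtracks and moves on via the next mutation edge. Choosing the speed of $\phi_n$ proportional to the length measure traversed, condition~(ii) of Proposition~\ref{propKeyToCRT} reduces to showing that $|\mathscr{G}_n| = n\,\Expec{L\zeta^M}/\Expec{\zeta^M} + o_p(n^{1/2+\epsilon})$, which is the second claim of the theorem and follows from Lemma~\ref{lemUniformBoundSumF}(ii) applied to the vertex decorations $F_k = L_k$. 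The required exponential moment $\Expec{e^{\eta L}\zeta^M}<\infty$ is precisely Lemma~\ref{lemLExpMomentUnderMBiais}.

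For condition~(iii), by Section~\ref{secConditioningColors} the tree $\mathscr{T}_n$ has the law of a critical Galton--Watson tree $\hat{\mathscr{T}}$ conditioned to have $n$ vertices with offspring variance $\hat{\sigma}^2$, so Aldous' theorem yields that its rescaled height function converges to $(2/\hat\sigma)\,e$ in the Skorokhod topology. To lift this to the network height $h_n(t) = d_n(r_n, \phi_n(t))$, I apply Lemma~\ref{lemUniformBoundSumF}(iii) to the edge decorations $G_{k\to\ell}$ equal to the time at which the mutation creating color $\ell$ occurs inside $\mathscr{X}_k$. A direct identification gives $m^* = \Expec{\zeta^M \sum_{t\in \mathcal{M}} t}/\Expec{M\zeta^M} = \Expec{U^*}$, so that the $\mathscr{G}_n$-distance from $r_n$ to the root of any color equals $\Expec{U^*}$ times the tree depth, with uniform error $o_p(n^{1/4+\epsilon})$. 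The additional within-color contribution to $h_n(t)$ is at most $\max_k \mathrm{diam}(\mathscr{X}_k) \leq \max_k T_k = O(\log n)$ by Lemma~\ref{lemUniformBoundSumF}(i). After multiplying by $C/\sqrt{n}$, both error terms vanish and $(C/\sqrt{n})\,h_n \to e$, which forces $C = \hat\sigma/(2\Expec{U^*})$.

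Condition~(i) is the delicate specificity of the network setting relative to the pure tree case of~\cite{Stu22}. The inequality $d_n(\phi_n(s),\phi_n(t)) \leq d_{h_n}(s,t)$ is automatic, since the contour is one admissible path between the two points. For the reverse, the key observation is that any shortcut exploiting coalescences must stay inside a single color -- distinct colors are glued at single points in $\mathscr{T}^\star$ -- so it saves at most $2\max_k \mathrm{diam}(\mathscr{X}_k) = O(\log n)$, which is $o(\sqrt n)$ after rescaling.

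The main obstacle -- and what requires development beyond~\cite{Stu22} -- is the control of the within-color statistics $L_k$, $T_k$, $\mathrm{diam}(\mathscr{X}_k)$ under the $\zeta^{M_k}$- and $M_k\zeta^{M_k}$-biasings introduced by the critical tilt and by the spine size-biasing used in the proof of Lemma~\ref{lemUniformBoundSumF}(iii). Lemma~\ref{lemLExpMomentUnderMBiais} is precisely what makes these quantities amenable to the Chernoff-type concentration of Lemma~\ref{lemChernoffBound}, and hence to the argument sketched above.
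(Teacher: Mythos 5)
Your proposal is correct and follows essentially the same route as the paper: the same admissible parametrization via a depth-first traversal of the unreticulated $\R$-tree $\mathscr{G}_n^\#$, the same application of Lemma~\ref{lemUniformBoundSumF}(ii) to the total lengths $L_k$ for condition~(ii), the same combination of the Marckert--Mokkadem/Aldous convergence of $h_{\mathscr{T}_n}$ with Lemma~\ref{lemUniformBoundSumF}(iii) applied to the mutation times to get condition~(iii) and the constant $C = \hat\sigma/(2\Expec{U^*})$, and the same $O(\max_k T_k)$ control on within-color diameters via Lemmas~\ref{lemLExpMomentUnderMBiais} and~\ref{lemUniformBoundSumF}(i) for condition~(i). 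One small remark on your condition~(i): the one-sided inequality $d_n(\phi_n(s),\phi_n(t)) \leq d_{h_n}(s,t)$ holds, but not because ``the contour is a path''; rather, because the time-embedding forces $d_n(r_n,\cdot)$ to agree with the $\mathscr{G}_n^\#$-tree distance from the root, so that $d_{h_n}$ is exactly the tree distance in $\mathscr{G}_n^\#$, which dominates $d_n$ -- the paper sidesteps this by bounding $\lvert d_n - d_{h_n}\rvert$ directly via the decomposition of shortest paths through the roots $z_x, z_y$ of the subnetworks adjacent to the mrca color.
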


\begin{remark} \label{remExpressCViaNuCirc}
Using the probability measure $\nu_\circ$ introduced in
Proposition~\ref{propNuCircXfromMut}, the constant $C$ can also be expressed
in terms of $T$, the extinction time of the logistic branching process.
Indeed, as explained after the proof of Theorem~\ref{thmCRTconvergence},
\[
  \Expec{U^*} \,=\; 
  \frac{\zeta \, \Expec{M}}{\Expec*{\normalsize}{\zeta^M}}
  \sum_{k > 0} \nu_\circ(k) \,
  \Expec*[k]{\normalsize}{T\, \zeta^M} \,
  \Expec*[k-1]{\normalsize}{\zeta^M} \, . \qedhere
\]
\end{remark}

\begin{remark} \label{remCvMoments}
\newtext{Addario-Berry et al.~\cite{ADJ13} have shown that for critical
Galton--Watson trees with finite-variance offspring distribution, the normalized
height (and width) of the tree conditioned to have $n$ vertices satisfy
uniform sub-Gaussian tail bounds.
More precisely, letting $H(\mathscr{T}_n)$ denotes the height of the Galton--Watson
tree $\mathscr{T}_n$, there exist $K,k>0$ such that for every $n\in \N$ and
every $x\in \R_+$ we have
\begin{equation} \label{eqAddarioBerry}
 \Prob{H(\mathscr{T}_n) \geq h\sqrt{n}} \;\leq\; K e^{-k h^2}.
\end{equation}
Now consider, as in Lemma~\ref{lemUniformBoundSumF}, any family of edge decorations $(G_e)_{e\in E(\mathscr{T})}$ 
such that $\Expec*{\normalsize}{e^{\eta \Abs{G_{k\to \ell}}} M_k\zeta^{M_k}} < \infty$ for some $\eta > 0$.
Let us write
\[
  \widetilde{\Delta}_n \;=\; \max_{v\in \mathscr{T}} 
  \frac{1}{\sqrt{n}}\,\Abs[\Big]{\!\!\sum_{\;\;e\in\Gamma(v)} \!\!\! G_e}.
\]
With the notation of Lemma~\ref{lemUniformBoundSumF}\,(iii), we have $\widetilde{\Delta}_n \leq (\Delta^*+H(\mathscr{T})m^*)/\sqrt{n}$.
Thus, applying \eqref{eqAddarioBerry} and Lemma~\ref{lemUniformBoundSumF}\,(iii) with
$\epsilon=1/4$ yields
\begin{align*}
  \Prob{\widetilde{\Delta}_n \geq x \mid A_n}
  \;&\leq\; \Prob{\Delta^* \geq x\sqrt{n}/2 \mid A_n} \;+\;
            \Prob{H(\mathscr{T}_n) \geq x\sqrt{n}/2}\\
  &\leq\; K\big(e^{-kx^{a}} + e^{-kx^2}\big)
\end{align*}
for some positive constants $K$, $k$ and $a$ that do not depend on $n$.  This
uniform tail bound implies that for any $p\geq 1$, the sequence
$(\widetilde{\Delta}_n^p)_{n\geq 1}$ is tight.  In our context, if we let
$G_{k\to\ell}$ be the distance between the root of
$\mathscr{X}_k$ and the mutation corresponding to the vertex $\ell$, then
$\widetilde{\Delta}_n$ is the height of $(\mathscr{G}_n, r_n,
d_{\mathscr{G}_n}/\sqrt{n})$.  Therefore, we conclude that in addition to the
convergence in distribution of Theorem~\ref{thmCRTconvergence}, all moments of the
height of the network, diameter and related quantities converge to the
moments of the properly rescaled Brownian CRT.}
\end{remark}

\begin{proof}[Proof of Theorem~\ref{thmCRTconvergence}]
The proof is an application of Proposition~\ref{propKeyToCRT}. Set
\[
  d_n \;=\; C\, n^{-1/2}\, d_{\mathscr{G}_n} , 
  \qquad
  \lambda_n \;=\; |\mathscr{G}_n|^{-1} \lambda_{\mathscr{G}_n},
\]
and let us define an admissible parametrization
$\phi_n\colon[0, 1]\to (\mathscr{G}_n, r_n, d_n, \lambda_n)$.
Recall that, in the forward-time process defining $\mathscr{G}$,
a \emph{branching point} is a point where a lineage splits and
a \emph{coalescence point} is a point where two lineages merge.
Using some arbitrary procedure, distinguish one of the two outgoing
lineages of each branching point of $\mathscr{G}_n$ and one of the
two incoming lineages of each coalescence point. Note that by (1) disconnecting
the tip of each of the distinguished lineages that correspond to coalescences
points from those coalescence points and (2) drawing distinguished lineages
that correspond to branching points to the right of their undistinguished
counterparts, we get a rooted plane $\R$-tree~$\mathscr{G}_n^{\#}$ (not to be
confused with $\mathscr{T}_n$, the combinatorial tree encoding the genealogy
of the colors of~$\mathscr{G}_n$).

\newpage \enlargethispage{2ex}

Now, pick a depth-first ordering of the vertices of $\mathscr{T}_n$, and
visit the points of $\mathscr{G}_n$ as follows:
\begin{itemize}
  \item Visit the subnetworks corresponding to the
    vertices of $\mathscr{T}_n$ in depth-first order.
  \item Within each subnetwork $\mathscr{X}_k$, do a depth-first
    traversal of the corresponding ``unreticulated'' $\R$-tree
    $\mathscr{X}_k^\#$, that is: starting from the root, travel along the
    lineages at constant speed $n L_k = n \lambda_{\mathscr{G}_n}(\mathscr{X}_k)$,
    in depth-first order and visiting the ``left'' subtree first when
    encountering a branching point.
\end{itemize}
This construction is illustrated in Figure~\ref{figPhi}. Note that each jump of
$\phi_n$ corresponds to either the tip of a lineage or the second visit
of a coalescence point, and that those jumps can be negative (typical case) or
positive (which can only happen when finishing the exploration of a
color and moving to a new one). Moreover, each point of $\mathscr{G}_n$ is
visited exactly once, except for:
\begin{itemize}
  \item The tips of lineages, which -- with the exception of $\phi_n(1)$ --
    correspond to the left-limits of some of the jumps of $\phi_n$.
  \item Branching points, which are visited twice.
\end{itemize}
As a result, $\phi_n([0, 1])$ is dense in $\mathscr{G}_n$ and $\phi_n$ is
an admissible parametrization of~$\mathscr{G}_n$.

\begin{figure}[h!]
  \centering
  \includegraphics[width=0.7\linewidth]{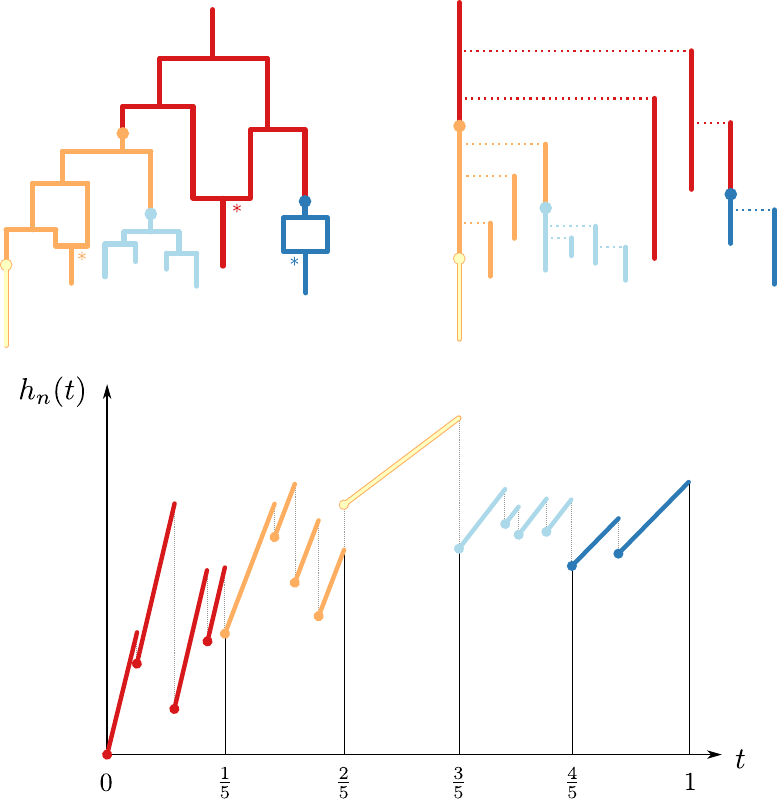}
  \caption{Illustration of the construction of the admissible parametrization 
  $\phi_n$ used in the proof. Left, a realization of $\mathscr{G}_n$
  for $n = 5$, with the same drawing conventions as in
  Figure~\ref{figDefProcess}.  The distinguished lineages associated to
  coalescences are indicated by asterisks and, to avoid cluttering, the
  distinguished lineages associated to branchings are taken to be the lineages
  drawn to the right. Right, the rooted plane $\R$-tree $\mathscr{G}_n^\#$
  obtained by ``disconnecting'' coalescence points of
  $\mathscr{G}_n$. This tree is to provide us
  with a natural order in which to visit the lineages of $\mathscr{G}_n$.
  Bottom, the height function $h_n\colon t \mapsto d_n(r_n, \phi_n(t))$
  associated to $\phi_n$.  The speed of travel along the lineages of the
  subnetwork corresponding to a given color is proportional to the total length
  of that subnetwork, ensuring that each color is allotted the same amount of
  time by $\phi_n$.}
  \label{figPhi}
\end{figure}

Next, let us show that $\phi_n$ satisfies assumptions~(i--iii) of
Proposition~\ref{propKeyToCRT}. Starting with (i), pick $s, t \in [0, 1]$ with
$s < t$, and let $(x, y) = (\phi_n(s), \phi_n(t))$ be the corresponding
points of $\mathscr{G}_n$. Let then $x\wedge y$ be the most recent common ancestor
of $x$ and $y$ in $\mathscr{G}_n$, i.e.\ the (unique) oldest point in a (non
necessarily unique) shortest path between $x$ and $y$, and let $\mathscr{X}_c$
be the subnetwork containing $x \wedge y$. Let $z_c$ be the root of
$\mathscr{X}_c$ and, for $i \in \{x, y\}$, let $z_i$ be: $z_c$ if
$i \in \mathscr{X}_c$; otherwise, the root of the subnetwork through which
every path from $x\wedge y$ to~$i$ exits $\mathscr{X}_c$.
These definitions are illustrated in Figure~\ref{figProofCRT}.

\begin{figure}[h!]
  \centering
  \includegraphics[width=0.7\linewidth]{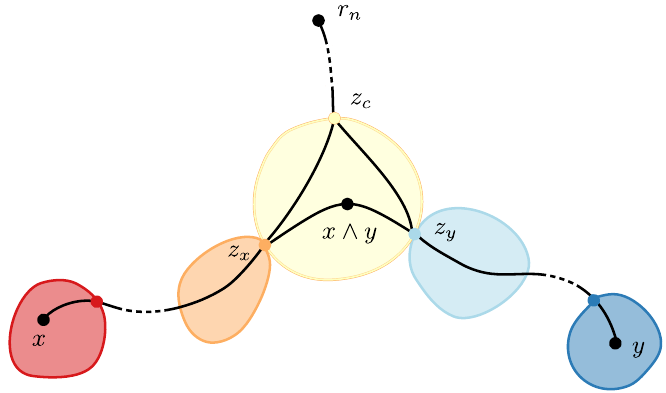}
  \caption{Graphical depiction of some of the notation used in the proof. The
  black lines correspond to shortest paths between various points of
  $\mathscr{G}_n$, and the colored blobs to the subnetworks associated to
  the colors. Although there can be several shortest paths between $x$ and $y$,
  each of these paths goes through $z_x$ and $z_y$, hence
  $d_n(x, y) = d_n(x, z_x) + d_n(z_x, z_y) + d_n(z_y, y)$. Similarly,
  $d_n(r_n, x) = d_n(r_n, z_x) + d_n(z_x, x)$. Finally, note that within
  each subnetwork $\mathscr{X}_k$ the distances are bounded above by $2\,T_k$,
  where $T_k$ is the extinction time of the corresponding process $X_k$.}
  \label{figProofCRT}
\end{figure}

With this notation, and recalling that $h_n(t) = d_n(r_n, \phi_n(t))$, observe that
\begin{align} \label{ineqProofThmCRT01}
  & \Abs[\big]{d_n(x, y) - h_n(s) - h_n(t) + 2 \inf_{[s, t]} h_n} \notag \\
  =\;\;& \Abs[\big]{d_n(x, z_x) + d_n(z_x, z_y) + d_n(z_y, y)
                     - d_n(r_n, x) - d_n(r_n, y) + 2 \inf_{[s, t]} h_n} \notag \\
  \leq\;\;& d_n(z_x, z_y)  
          \;+\;\Abs[\big]{d_n(r_n, z_x) -  \inf_{[s, t]} h_n} 
          \;+\; \Abs[\big]{d_n(r_n, z_y) -  \inf_{[s, t]} h_n}  \,.
\end{align}
Now, $d_{\mathscr{G}_n}(z_x, z_y) < 2\,T_c$, where $T_c$ is the extinction time
of the logistic process $X_c$ associated to the color $c$. Therefore, 
\begin{equation} \label{ineqProofThmCRT02}
  d_n(z_x, z_y) \;<\; 2\,C n^{-1/2}\,T_c \,.
\end{equation}
Moreover, since by construction
of $\phi_n$ the vertices of $\mathscr{T}_n$ are visited in depth-first order,
for all $u \in [s, t]$ the point $\phi_n(u)$ belongs to either
$\mathscr{X}_c$ or one of its descendants, which implies
that $h_n(u) \geq d_n(r_n, z_c)$; and since there exists $u \in [s, t]$ such
that $\phi_n(u)$ is at distance 0 from $\mathscr{X}_c$ (indeed, if $y \in
\mathscr{X}_c$ one can take $u = t$, and if $y \notin \mathscr{X}_c$ then one
can take $u$ such that $\phi_n(u) = z_y$), which in turns implies
$h_n(u) \leq d_n(r_n, z_c) + Cn^{-1/2}T_c$, we get 
\[
  d_n(r_n, z_c) \;\leq\; \inf_{[s, t]} h_n \;\leq\; d_n(r_n, z_c) + Cn^{-1/2}T_c\,.
\]
Similarly, for $i \in \{x, y\}$ we have
$d_{n}(r_n, z_c)\leq d_{n}(r_n, z_i) \leq d_{n}(r_n, z_c)+C n^{-1/2}\, T_c$, 
so that
\begin{equation} \label{ineqProofThmCRT03}
  \Abs[\big]{d_n(r_n, z_i) -  \inf_{[s, t]} h_n}  \;\leq\;
   C n^{-1/2}\,T_c,\qquad i\in \Set{x,y}.
\end{equation}
Plugging \eqref{ineqProofThmCRT02} and~\eqref{ineqProofThmCRT03} in
\eqref{ineqProofThmCRT01}, we get
$\Abs{d_n(x, y) - d_{h_n}(s, t)} < 4\,C\, n^{-1/2}\,T_c$. Therefore, 
\[
  \sup_{s, t\in [0, 1]}\, \Abs[\big]{d_n(\phi_n(s), \phi_n(t)) - d_{h_n}(s, t)}
  \;\leq\; 4\, C\, n^{-1/2} \max(T_1, \ldots, T_n) \,.
\]
Applying point~(i) of Lemma~\ref{lemUniformBoundSumF} to the extinction times
$T_1, \ldots, T_n$, which is made possible by the fact that we know from
Lemma~\ref{lemLExpMomentUnderMBiais} that $T$ has finite exponential moments
under $\mathscr{L}(\,\cdot\,\biasedby M \zeta^M)$, we get
$\max(T_1, \ldots, T_n) = o_p(n^\epsilon)$ for any $\epsilon > 0$,
which in turns implies
\[
  \sup_{s, t\in [0, 1]}\, \Abs[\big]{d_n(\phi_n(s), \phi_n(t)) - d_{h_n}(s, t)}
  \;\;\overset{d}{\longrightarrow}\;\; 0,
\]
thereby proving that $\phi_n$ satisfies assumption~(i) of
Proposition~\ref{propKeyToCRT}.

Let us now turn to assumption~(ii). Let $\mathscr{X}_1, \ldots, \mathscr{X}_n$ be
the subnetworks of $\mathscr{G}_n$, in order of their visit by $\phi_n$.
By construction of $\phi_n$, for all $t \in [0,1]$ we have
$\phi_n(t) \in \mathscr{X}_{c_t}$, where $c_t \defas (\lfloor tn \rfloor + 1)\wedge n$
Moreover,
\[
  \sum_{k = 1}^{c_t - 1} L_k
  \;\leq\; \lambda_{\mathscr{G}_n}(\phi_n([0, t])) \;\leq\;
  \sum_{k = 1}^{c_t} L_k\,, 
\]
where $L_k = \lambda_{\mathscr{G}_n}(\mathscr{X}_k)$.
Applying point~(ii) of Lemma~\ref{lemUniformBoundSumF} to $L_1, \ldots, L_n$,
which again is made possible by Lemma~\ref{lemLExpMomentUnderMBiais},  we get
that for any $\epsilon > 0$, 
\[
  \lambda_{\mathscr{G}_n}(\phi_n([0, t])) \;=\; t n \ell \;+\;
  o_p(n^{1/2\,+\,\epsilon})\,, 
\]
uniformly in $t \in [0, 1]$ and with
$\ell = \Expec*{\normalsize}{L\zeta^M} / \Expec*{\normalsize}{\zeta^M}$.
Taking $t = 1$, we see that $|\mathscr{G}_n| = n \ell + o_p(n^{1/2\,+\,\epsilon})$,
as claimed in the statement of the theorem. From there, we get
\[
  \sup_{t \in [0, 1]} \Abs[\big]{\lambda_n(\phi_n([0, t])) - t\,} \;=\; 
  o_p(n^{-1/2\,+\,\epsilon})\,, 
\]
which shows that $\phi_n$ satisfies assumption~(ii).

To show that $\phi_n$ satisfies assumption~(iii), let
$h_{\mathscr{T}_n}$ be the
height process of $\mathscr{T}_n$, that is
\[
  h_{\mathscr{T}_n}\colon\begin{cases}
    \displaystyle \Set{1, \ldots, n} &\longrightarrow\; \N \\
    \displaystyle \hfill k &\longmapsto \; d_{\mathscr{T}_n}(v_1, v_{k}) \,, 
  \end{cases}
\]
where $v_1, \ldots, v_n$ are the vertices of $\mathscr{T}_n$, in order of
their visit by $\phi_n$ and $d_{\mathscr{T}_n}(u, v)$ is the number of edges of
the path joining $u$ and $v$.  Since $\mathscr{T}_n \sim
(\smash{\hat{\mathscr{T}}\mid \hat{A}_n})$ is a critical Galton--Watson tree
conditioned to have $n$ vertices, it is well-known -- see e.g.\ Corollary~1 in
\cite{MaM03}, from which this readily follows -- that, as $n\to \infty$,
\[
  \mleft(\tfrac{1}{\sqrt{n}} h_{\mathscr{T}_n}(c_t)\mright)_{t \in [0, 1]}
  \;\;\overset{d}{\longrightarrow}\;\;
  \big(\tfrac{2}{\hat{\sigma}} e(t)\big)_{t \in [0, 1]}
  \quad\text{in the Skorokhod space } D,
\]
where $(e(t))_{t\in[0, 1]}$ is a standard Brownian excursion and
$\hat{\sigma}^2 = \Var[\normalsize]{\hat{M}}$ is the variance of the offspring
distribution of $\smash{\hat{\mathscr{T}}}$, and $c_t =(\lfloor tn \rfloor + 1)\wedge n$. Therefore, to conclude the proof
of Theorem~\ref{thmCRTconvergence} it suffices to show that
\begin{equation} \label{eqProofThmCRT}
  \sup_{t \in [0, 1]}\, \Abs[\big]{\,h_{\mathscr{G}_n}(t) -
  \Expec{U^*}\, h_{\mathscr{T}_n}(c_t)\,}
  \;=\; o_p(n^{1/2}) \,, 
\end{equation}
where $h_{\mathscr{G}_n}(t) = d_{{\mathscr{G}_n}}(r_n, \phi_n(t))$ and
$U^*$ is the time of a mutation sampled uniformly at random among the mutations
of the process $X^* \sim \mathscr{L}(X \biasedby M \zeta^M)$.

For this, for all $k \in \Set{1,
\ldots, n}$ let us denote by $z_{k}$ the root of $\mathscr{X}_{k}$. As a
result, letting $\Gamma(k) = (i_1 \to \ldots \to i_{p})$ be such that
$(v_{i_1} = v_1, \ldots, v_{i_p} = v_k)$ is the path from $v_1$ to $v_k$
in $\mathscr{T}_n$, and recalling that
$\phi_n(t) \in \mathscr{X}_{c_t}$, we
see that, for all $t \in [0, 1]$,
\[
  h_{\mathscr{G}_n}(t)\;=\;
  \sum_{\qquad\mathclap{(i\to j) \in \Gamma(c_t)}} d_{\mathscr{G}_n}(z_i, z_j) \;+\;
  d_{\mathscr{G}_n}(z_{c_t}, \phi_n(t)) \,.
\]
As we have already seen, for any $\epsilon > 0$,
$d_{\mathscr{G}_n}(z_{c_t}, \phi_n(t)) < T_{c_t} = o_p(n^\epsilon)$, uniformly
in~$t$. Since, by definition of $h_{\mathscr{T}_n}$,
the number of edges of $\Gamma(c_t)$ is $h_{\mathscr{T}_n}(c_t)$,
we get that for any constant $\kappa$,
\[
  h_{\mathscr{G}_n}(t) \;-\; \kappa\, h_{\mathscr{T}_n}(c_t)
  \;\;= \sum_{\qquad\mathclap{(i\to j) \in \Gamma(c_t)}}
  (d_{\mathscr{G}_n}(z_i, z_j) - \kappa)  \;+\; o_p(n^\epsilon) \,.
\]
Moreover, along $\Gamma(c_t)$ each $d_{\mathscr{G}_n}(z_i, z_j)$
is the time elapsed between the creation of~$\mathscr{X}_i$ and
that of $\mathscr{X}_j$ -- which, conditional on
$\mathscr{X}_i$, is distributed as the random functional
$U_i = U(\mathscr{X}_i)$ giving the time of a mutation sampled uniformly at
random among the mutations of $\mathscr{X}_i$.
As a result, applying point~(iii) of
Lemma~\ref{lemUniformBoundSumF}, we get
\[
  \sup_{t \in [0, 1]}\, \Abs[\Big]{\sum_{\qquad\mathclap{(i\to j) \in \Gamma(c_t)}}
  \!\!\!\big(d_{\mathscr{G}_n}(z_i, z_j) - \Expec{U^*}\big)}
  \;=\; o_p(n^{1/2}) \,, 
\]
where $\Expec{U^*} = \Expec*{\normalsize}{U M \zeta^M}/{\Expec*{\normalsize}{\zeta^M}}$
does indeed correspond to the expected value of the time of a mutation sampled
uniformly at random among the mutations of the biased process $X^* \sim
\mathscr{L}(X \biasedby M \zeta^M)$. Putting the pieces together, this
proves~\eqref{eqProofThmCRT}, thereby concluding the proof of
Theorem~\ref{thmCRTconvergence}.
\end{proof}

Finally, before closing this section, let us justify the expression of
$\Expec{U^*}$ given in Remark~\ref{remExpressCViaNuCirc}. To make things
simpler, we work with the ``extended'' process~$\bar{X}$, which, in addition
to the trajectory of $X$, contains the information of which jump corresponds to
a mutation.  Thus, $M = M(\bar{X})$ is a deterministic function of~$\bar{X}$.
First, note that
\[
  \Expec{U^*} \;=\; \Expec*{\normalsize}{\zeta^M}^{-1}\,
  \Expec*{\big}{\sum_{\;t \in \mathcal{M}}\!\! t\, \zeta^M\,} \,, 
\]
and that, considering the shift operators $(\Theta_t)_{t\in \R}$ defined by
$\Theta_t\bar{X}\defas(\bar{X}_{t + s})_{-t \leq s \leq T - t}$
and the function
\[
  F\colon\bar{X} \;\longmapsto\; \Expec*{\normalsize}{\zeta^M}^{-1}\,\zeta^{M(\bar{X})}\,
  \sup\Set{t \suchthat \bar{X}_{-t} > 0}\,, 
\]
we have
\[
  \Expec*{\normalsize}{\zeta^M}^{-1}\, \Expec*{\big}{\sum_{\;t \in \mathcal{M}}\!\! t\, \zeta^M\,}
  \;=\;
  \Expec*{\big}{\sum_{\;t \in \mathcal{M}}\!\!F(\Theta_t \bar{X})} \,.
\]
Recalling the definition of the process $\Xmut$ introduced in
Section~\ref{secLogistic}, this is also
\[
  \Expec*{\big}{\sum_{\;t \in \mathcal{M}}\!\!F(\Theta_t \bar{X})} \;=\;
  \Expec{M}\,\Expec*{\big}{F(\bar{X}^\mathfrak{m})} \,.
\]
Therefore, using the $\bar{X}^{\mathfrak{m}} \overset{d}{=} \bar{X}' \wr \bar{X}''$
decomposition given in Proposition~\ref{propNuCircXfromMut}, together with the
fact that
\[
  F(\bar{X}' \wr \bar{X}'') \;=\;
  \Expec*{\normalsize}{\zeta^M}^{-1}
  \zeta^{M(\bar{X}') + M(\bar{X}'') + 1} \, T(\bar{X}')\,, 
\]
where $T(\bar{X}')$ denotes the extinction time of $\bar{X}'$, we get
\[
  \Expec*{\big}{F(\bar{X}^\mathfrak{m})} \;=\;
  \zeta \, 
  \Expec*{\normalsize}{\zeta^M}^{-1}
  \sum_{k > 0} \nucirc(k) \, \Expec*[k]{\normalsize}{T \zeta^M} \, 
  \Expec*[k-1]{\normalsize}{\zeta^M} \,.
\]
Putting the pieces together, this yields the expression given in
Remark~\ref{remExpressCViaNuCirc}.

\section{Local weak limit} \label{secLocalLimit}

In this section, we describe the structure of $\mathscr{G}_n$ around a
uniformly chosen point.  More specifically, we give an algorithmic
construction of the local weak limit of~$\mathscr{G}_n$ around a focal point
distributed according to the normalized measure
$\lambda_{\mathscr{G}_n}/\Abs{\mathscr{G}_n}$.
The notion of local weak convergence after uniform
rooting was introduced by Benjamini and Schramm in \cite{BeS01},
and is therefore also known as \emph{Benjamini--Schramm} convergence; see
e.g.\ \cite[Section~2.2]{Stu20} or \cite[Section~1.2]{Cur18} for a general introduction.
Throughout this section, unless specified otherwise the term
\emph{local weak limit} will always refer to the to the Benjamini--Schramm limit.

This section is organized as follows: first, we briefly lay out the topological
notions that are used in our proof of the local convergence.
We then describe the local weak limit $(\mathscr{G}^\dagger, x^\dagger)$
as a decorated random tree. This random tree is a biased -- that is, non-uniformly
rooted -- local weak limit of the size-conditioned Galton--Watson tree $\mathscr{T}_n$
giving the genealogy of the colors of $\mathscr{G}_n$
(see Section~\ref{secConditioningColors}), and the decorations are modifications
of the subnetwork $\mathscr{X}$ corresponding to a generic color.
We close the section by describing the geometry of these various
modifications of $\mathscr{X}$.

\subsection{Local topology} \label{secLocalTopology}

\newcommand{\locspace}{\mathfrak{T}_{\mathrm{loc}}}

In order to define our local topology, we first need to specify a local space
of decorated trees. A locally finite pointed rooted plane tree -- henceforth
simply referred to as a \emph{pointed tree} for brevity -- is a pair
$(\mathscr{T}, v^*)$ where $\mathscr{T}$ is a rooted plane tree in which every
vertex has a finite degree, and $v^*$ is a vertex of $\mathscr{T}$ known as the
focal vertex. Note that in the case where $\mathscr{T}$ is infinite, its root
can be located at infinity: in that case, instead of corresponding to a
vertex, the root corresponds to a topological end of $\mathscr{T}$. Another
way to see this is that $\mathscr{T}$ being rooted actually means for any pair of
adjacent vertices $(u, v)$ we know who is the parent and who is the child.

A \emph{decorated pointed tree} $(\mathscr{T}, v^*, (D_v)_{v \in \mathscr{T}})$
is a pointed tree where each vertex $v \in \mathscr{T}$ is associated
to a random variable $D_v$ taking value in a
Polish space $\mathscr{D}$.  In our setting, $\mathscr{D}$ will be a space in
which the color networks $(\mathscr{X}_v)$ used in the construction of
$\mathscr{G}$ as a decorated tree are well-defined Borel-measurable random
variables; but for now let us view it simply as an abstract Polish space. We
denote by $\locspace$ the space of decorated pointed trees.

The \emph{local topology} on $\locspace$ is the topology generated by the
following basis of open sets:
\[
  U\big(r,\mathbf{t},(V_v)_{v\in \mathbf{t}} \big)
  = \Set*[\big]{(\mathscr{T}, v^*, (D_v)_{v \in \mathscr{T}})\in \locspace \suchthat B_{\mathscr{T}}(v^*\!,\, r) = \mathbf{t}
  \text{ and } \forall v\in \mathbf{t}, \;D_{v} \in V_v},
\]
where $r$ runs over the positive integers, $\mathbf{t}$ over the finite
pointed trees, and $(V_u)_{u\in \mathbf{t}}$ over the opens
sets of $\mathscr{D}$. The notation $B_{\mathscr{T}}(v^*, r)$ stands for
the ball of radius $r$ centered at $v^*$ in $\mathscr{T}$.

To make our description of $\locspace$ fully explicit, we would need to
give a formal definition of the space $\mathscr{D}$ of decorations.
While this is relatively straightforward to do, this is not only tedious
but also uninformative. Therefore, we leave it to the reader to convince
themself that this can be done while ensuring that the following properties
hold:
\begin{itemize}
  \item The decorations are pointed networks, i.e.\ pairs
    $(\mathscr{X}, x^*)$ where $x^*\in \mathscr{X}$ and, as previously, 
    the network~$\mathscr{X}$ -- which is meant to represent the subnetwork
    of $\mathscr{G}$ that corresponds to a given color -- can be seen as a
    collection of segments glued together at their endpoints
    (see Section~\ref{secResults}). We denote by $\lambda_{\mathscr{X}}$ the
    Lebesgue measure on $\mathscr{X}$.
    
    To keep the notation light, the fact that the decorations are pointed 
    will be considered implicit: we occasionally write
    $\mathscr{X}$ instead of $(\mathscr{X},x^*)$ when the focal point $x^*$
    is irrelevant.
  \item The map $\mathscr{X} \mapsto L_{\mathscr{X}} \defas
    \int d\lambda_{\mathscr{X}}$ is continuous.
  \item For all continuous bounded maps $F\colon\mathscr{D}\to \R$, the map
  \[
  \mathscr{X} \;\longmapsto\;
  \int F(\mathscr{X},x) \,\lambda_{\mathscr{X}}(dx)
  \]
  is continuous.
\end{itemize}

\subsection{Construction of the limit as a decorated tree} \label{secConstructionLocLim}

First, recall from Section~\ref{secConditioningColors} that if
$\smash{\hat{\mathscr{T}}}$ is Galton--Watson tree whose offspring distribution
$\hat{M}$ is given by
\[
  \Prob*{\normalsize}{\hat{M}=k} \;=\;
  \frac{\zeta^k\, \Prob{M = k}}{\Expec{\zeta^M}},  \qquad k\geq 0,
\]
where $\zeta$ is as in Corollary~\ref{corExpTiltM}, 
then $\smash{\hat{\mathscr{T}}}$ conditioned to have $n$ vertices
has the same distribution as the tree
$\mathscr{T}_n$ used to construct $\mathscr{G}_n$ as a decorated tree.

Next, let us describe $(\mathscr{T}^*, v^*)$, the local weak limit of
$\mathscr{T}_n$. The local weak limit of size-conditioned critical
Galton--Watson trees after random rooting is the invariant random sin-tree
introduced by Aldous in~\cite{Ald91Fringe} -- see \cite{Stu19} for a detailed
presentation.  With our notation, this pointed tree
$(\mathscr{T}^*, v^*)$ can be constructed as follows:
\begin{itemize}
  \item Let $v^*$ be the focal vertex and let
    $(v^*, v_1, v_2,\dots)$ be the \emph{spine} of $\mathscr{T}^*$, i.e.\
    an infinite path going towards the root (thus, $v_1$ is the parent of $v^*$,
    $v_2$ is the parent of $v_1$, etc).
  \item For each $k\geq 1$, add $M^*_k-1$ children to $v_k$,
    where $(M^*_k)_{k\geq 1}$ is an i.i.d.\ sequence with the
    size-biased distribution of $\hat{M}$.
  \item Let $v^*$, as well as each of the vertices added at the previous step
    be the root of a Galton--Watson tree with offspring distribution $\hat{M}$,
    and call $\mathscr{T}^*$ the resulting infinite random tree.
\end{itemize}

Now, let us consider the infinite random network $\mathscr{G}^*$
obtained by decorating $\mathscr{T}^*$ using the same procedure as when
decorating $\mathscr{T}_n$ to obtain~$\mathscr{G}_n$:
conditional on~$\mathscr{T}^*$, let us decorate each vertex $v$,
independently of everything else, with a random network~$\mathscr{X}_v$
having the distribution of the generic color network $\mathscr{X}$ conditioned
to have a number of mutations equal to the number of children of $v$.
Note that the subnetwork $\mathscr{X}^*$
corresponding to the focal vertex $v^*$ plays a special role in $\mathscr{G}^*$:
we refer to that subnetwork as the \emph{focal network}.

The pair $(\mathscr{G}^*\!,\,\mathscr{X}^*)$ is not the local weak limit
of $\mathscr{G}_n$: indeed, it describes the limit of neighborhoods
of a \emph{color network} picked uniformly at random in $\mathscr{G}_n$, 
rather than the limit of the neighborhoods of a \emph{point} picked uniformly
at random in~$\mathscr{G}_n$.  To see why the two differ, note in particular that
picking the focal point $x^*$ according to the normalized length measure
$\lambda_{\mathscr{G}_n} / |\mathscr{G}_n|$ biases the focal network by its
total length $L^* = \lambda_{\mathscr{X}^*}(\mathscr{X}^*)$.

To construct the local weak limit of $\mathscr{G}_n$,
conditional on $(\mathscr{G}^*\!,\,\mathscr{X}^*)$ let
$x^* \sim \lambda_{\mathscr{X}^*}/L^*$ be a random point of $\mathscr{X}^*$.
Note that -- to fall back on the topological framework of
Section~\ref{secLocalTopology} -- the pointed network $(\mathscr{G}^*\!,\, x^*)$ can be
seen as an element of
$\locspace$ by identifying it with a copy of $(\mathscr{T}^*\!,\, v^*)$ where
the focal vertex $v^*$ is decorated with the pointed network
$(\mathscr{X}^*\!,\, x^*)$, and the decorations of the other vertices
are arbitrarily pointed.
Now recall from Notation~\ref{notationBiasedTrajectory} that
$\mathscr{L}(\, \cdot \biasedby L^*)$ denotes the distribution
under the $L^*$-biased probability measure $\Expec*{\normalsize}{\Indic{\cdot}L^*} / \Expec{L^*}$,
and let $(\mathscr{G}^\dagger,\, x^\dagger)$ be the random pointed network
rooted at infinity characterized by
\[
  (\mathscr{G}^\dagger,\,x^\dagger) \;\sim\;
  \mathscr{L}\big((\mathscr{G}^*\!,\, x^*) \biasedby[big] L^*\big)\,. 
\]

\begin{theorem} \label{thmLocLim}
The pointed network $(\mathscr{G}^\dagger,\, x^\dagger)$ is the local
weak limit of $(\mathscr{G}_n)_{n\geq 1}$.
\end{theorem}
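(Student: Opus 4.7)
The plan is to verify the defining property of local weak convergence: for every continuous bounded $F\colon \locspace \to \R$,
\[
  \Expec{F(\mathscr{G}_n, x_n^*)} \;\longrightarrow\; \Expec{F(\mathscr{G}^\dagger, x^\dagger)},
\]
with $x_n^* \sim \lambda_{\mathscr{G}_n}/|\mathscr{G}_n|$. Sampling $x_n^*$ is equivalent to picking a color vertex $v \in \mathscr{T}_n$ with probability $L_v/|\mathscr{G}_n|$ and then a uniform point of $\mathscr{X}_v$, so that
\[
  \Expec{F(\mathscr{G}_n, x_n^*)} \;=\;
  \Expec{\frac{1}{|\mathscr{G}_n|} \sum_{v\in \mathscr{T}_n} \Psi(\mathscr{G}_n, v)},
  \qquad \Psi(\mathscr{G}, v) \defas \int_{\mathscr{X}_v} \!F(\mathscr{G},x)\, d\lambda_{\mathscr{X}_v}(x).
\]
Theorem~\ref{thmCRTconvergence} gives $|\mathscr{G}_n| = n\ell + o_p(n^{1/2 + \epsilon})$ with $\ell = \Expec{L\zeta^M}/\Expec{\zeta^M}$, so up to a vanishing error one has $\Expec{F(\mathscr{G}_n, x_n^*)} = \ell^{-1}\,\Expec{\Psi(\mathscr{G}_n, v_n^*)} + o(1)$, where $v_n^*$ is a uniformly chosen vertex of $\mathscr{T}_n$.

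The second ingredient is the decorated local weak limit $(\mathscr{G}_n, v_n^*) \to (\mathscr{G}^*, v^*)$. At the tree level this is Aldous' fringe theorem: since $\mathscr{T}_n$ has the law of $(\hat{\mathscr{T}} \mid \hat{A}_n)$ for the critical Galton--Watson tree $\hat{\mathscr{T}}$ of Section~\ref{secConditioningColors}, $(\mathscr{T}_n, v_n^*)$ converges in the local topology to the invariant sin-tree $(\mathscr{T}^*, v^*)$ constructed in Section~\ref{secConstructionLocLim}. Because the decorations $\mathscr{X}_v$ are conditionally independent given the tree and depend on it only through the outdegree of~$v$, this convergence lifts to the decorated one.

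The functional $\Psi$ is continuous on $\locspace$ by the assumptions on $\mathscr{D}$ but only satisfies $|\Psi(\mathscr{G}, v)| \leq \|F\|_\infty L_v$, so I introduce the truncation $\Psi^K(\mathscr{G}, v) = \Psi(\mathscr{G}, v)\,\mathbf{1}_{L_v \leq K}$, which is continuous and bounded and hence yields $\Expec{\Psi^K(\mathscr{G}_n, v_n^*)} \to \Expec{\Psi^K(\mathscr{G}^*, v^*)}$ for each fixed~$K$. The delicate step --- the main technical hurdle, and where the specific structure of the logistic branching process enters --- is to control the remainder
\[
  \Expec{(\Psi-\Psi^K)(\mathscr{G}_n, v_n^*)} \;\leq\; \|F\|_\infty\, \Expec{\tfrac{1}{n} \sum_{v\in \mathscr{T}_n} L_v\, \mathbf{1}_{L_v > K} \mid A_n}
\]
uniformly in $n$ as $K \to \infty$. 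For this I apply Lemma~\ref{lemUniformBoundSumF}(ii) with the vertex decoration $F_v = L_v\mathbf{1}_{L_v > K}$; its exponential-moment hypothesis reduces to $\Expec{e^{\eta L}\zeta^M}<\infty$, which is precisely Lemma~\ref{lemLExpMomentUnderMBiais}. The lemma ensures the inner conditional expectation converges to $\Expec{L\mathbf{1}_{L>K}\zeta^M}/\Expec{\zeta^M}$, which tends to $0$ as $K\to\infty$ by dominated convergence; sending $n \to \infty$ first and then $K \to \infty$ therefore yields $\Expec{\Psi(\mathscr{G}_n, v_n^*)} \to \Expec{\Psi(\mathscr{G}^*, v^*)}$.

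To conclude, since conditionally on $\mathscr{G}^*$ the point $x^*$ is uniform in $\mathscr{X}^*$, we have $\Expec{\Psi(\mathscr{G}^*,v^*)} = \Expec{L^* F(\mathscr{G}^*, x^*)}$, and by the definition of $(\mathscr{G}^\dagger, x^\dagger) \sim \mathscr{L}((\mathscr{G}^*, x^*) \biasedby L^*)$ this equals $\Expec{L^*}\,\Expec{F(\mathscr{G}^\dagger, x^\dagger)}$. A direct computation using that the outdegree of $v^*$ has law $\hat{M}$ gives $\Expec{L^*} = \Expec{L\zeta^M}/\Expec{\zeta^M} = \ell$, which cancels the $1/\ell$ factor from the first paragraph and completes the proof.
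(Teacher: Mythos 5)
Your proof follows essentially the same route as the paper: rewrite $\Expec{F(\mathscr{G}_n,x_n^*)}$ as an average of $\Psi(\mathscr{G}_n,v)$ over the vertices of $\mathscr{T}_n$, replace $|\mathscr{G}_n|$ by $n\ell$ via Lemma~\ref{lemUniformBoundSumF}(ii), pass to the limit through the Aldous sin-tree local weak limit of the decorated $(\mathscr{T}_n,v_n^*)$, and close the argument via the $L^*$-bias identity. The one point where you diverge is in handling the unboundedness of $\Psi$: the paper restricts the test class to functions that vanish when $L_{\mathscr{X}}>\ell$ (which makes $\Psi$ bounded by $\Norm[\infty]{F}\ell$ and gives dominated convergence for free), while you keep a general continuous bounded $F$ and truncate the decoration with $\Psi^K$. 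This is a legitimate alternative, but it leaves two small gaps that you should close. First, the step from $|\mathscr{G}_n|=n\ell+o_p(n^{1/2+\epsilon})$ to $\Expec{F(\mathscr{G}_n,x_n^*)}=\ell^{-1}\Expec{\Psi(\mathscr{G}_n,v_n^*)}+o(1)$ uses the bound $\sum_v|\Psi(\mathscr{G}_n,v)|\leq \Norm[\infty]{F}|\mathscr{G}_n|$, which reduces the error to $\Norm[\infty]{F}\,\Expec*{\normalsize}{\Abs{1-|\mathscr{G}_n|/(n\ell)}\given A_n}$ --- convergence in probability is not enough here, and you need to invoke the sub-exponential tail from the sharpened form of Lemma~\ref{lemUniformBoundSumF}(ii) to upgrade it to $L^1$. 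Second, the same issue arises in the remainder control: Lemma~\ref{lemUniformBoundSumF}(ii) gives $\frac{1}{n}\sum_v L_v\Indic{L_v>K}\to\hat{m}_K$ in probability under $\P(\cdot\given A_n)$, not convergence of the conditional expectation, so again one must either invoke the exponential tail bound to pass to expectations, or simply bound $\Expec*{\normalsize}{L_{v_n^*}\Indic{L_{v_n^*}>K}\given A_n}$ directly via the fringe/spine decomposition. These are fixable technicalities, not conceptual errors.
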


Before proving this theorem, let us point out that $(\mathscr{G}^\dagger, x^\dagger)$
can also be constructed as follows; we leave it to the reader to convince themself of the
equivalence of the definitions:
\begin{itemize}
  \item Let $v^\dagger$ be the focal vertex, and let
    $(v^\dagger, v_1, v_2,\dots)$ be an infinite spine going towards the root.
  \item For each $k\geq 1$, add $M^*_k-1$ children to $v_k$,
    where $(M^*_k)_{k\geq 1}$ is an i.i.d.\ sequence with the
    size-biased distribution of $\hat{M}$.
  \item Let $\mathscr{X}^\dagger \sim \mathscr{L}(\mathscr{X} \biasedby L\zeta^M)$
    where $\mathscr{X}$ is a generic color network and $M$ and $L$
    are respectively its number of mutations and total length.
    Write $M^\dagger$ and $L^\dagger$ for the corresponding quantities in
    $\mathscr{X}^\dagger$, and add $M^\dagger$ children to $v^\dagger$.
  \item Let each children of $v^\dagger$, as well as each of the children
    that were added to the nodes $(v_k)_{k\geq 1}$, be the root of a
    Galton--Watson tree with offspring distribution $\hat{M}$.
    Let $\mathscr{T}^\dagger$ be the resulting tree.
  \item Decorate each node $v\in \mathscr{T}^\dagger$, $v \neq v^*$,
    with a network $\mathscr{X}_v$ having the law of a
    generic color network $\mathscr{X}$ conditioned to have as many mutations as
    the number of children of $v$.
    Let $\mathscr{X}^\dagger$ be the decoration of $v^\dagger$.
  \item Conditional on $\mathscr{X}^\dagger$, let
    $x^\dagger \sim \lambda_{\mathscr{X}^\dagger}/L^\dagger$ be the focal
    point of $(\mathscr{G}^\dagger, x^\dagger)$.
\end{itemize}

\begin{proof}[Proof of Theorem~\ref{thmLocLim}]
Let $x_n$ be a uniformly chosen point of $\mathscr{G}_n$, and
let $v^*_n$ be the vertex of $\mathscr{T}_n$ such that
$x_n \in \smash{\mathscr{X}_{v^*_n}}$.  Remember that, since here we
view $\mathscr{G}_n$ as the tree $\mathscr{T}_n$ decorated with \emph{pointed}
networks, if we let $x_n$ be the focal point of
$\smash{\mathscr{X}_{v^*_n}}$ then $(\mathscr{G}_n, x_n)$ and
$(\mathscr{T}_n, v^*_n)$ can be seen as the same object. Thus, we will
use these two notations interchangeably.
  
To prove the theorem, it suffices to show that
\begin{equation}\label{eqGoalLocLim}
  \Expec*{\big}{F(\mathscr{G}_n, x_n)} \;\tendsto{\,n\to\infty\,}\;
  \Expec{F(\mathscr{G}^\dagger,x^\dagger)}
\end{equation}
for any function $F\colon\locspace\to\R$ of the form
\[
  F(\mathscr{\mathscr{G}}\!,\, x) \;=\;
  F(\mathscr{\mathscr{T}}\!,\, v^*) \;=\;
  \Indic{B_{\mathscr{T}}(v^*\!,\,r)=\mathbf{t}}\,
  \prod_{v\in \mathbf{t}} F_v(\mathscr{X}_v, x_v)\,,
\]
where $r$ is a positive integer; $\mathbf{t}$ is a finite pointed rooted plane tree; and
$(F_v)_{v\in \mathbf{t}}$ is a family of nonnegative continuous bounded maps $\mathscr{D}\to\R$
such that for all $v \neq v^*\!$, $F_v(\mathscr{X}_v, x_v) = \tilde{F}_v(\mathscr{X}_v)$
depends only on $\mathscr{X}_v$.
\newtext{Note that any such map $F$ is continuous for the local topology and bounded, with $\Norm[\infty]{F} \defas \sup_{(\mathscr{G},x)} \Abs{F(\mathscr{G},x)}\leq \prod_{v\in \mathbf{t}}\sup_{(\mathscr{X},x)}\Abs{F_v(\mathscr{X},x)}<\infty$.}
Moreover, since the map $\mathscr{X}\mapsto L_{\mathscr{X}}$ giving 
the total length of a network is continuous, we can restrict ourselves to
functions~$F$ for which there exists $\ell>0$ such that, for all
$v\in \mathbf{t}$, $F_v(\mathscr{X}, x)=0$ if $L_{\mathscr{X}}>\ell$.

\newcommand{\Lhat}{\smash{\hat{\scalebox{0.9}{\ensuremath L}}}\vphantom{L}}

Let us show that to finish the proof it suffices to show that
\begin{equation} \label{eqProofLocLimit02}
  \Expec*{\bigg}{
    \Big\lvert\Big(\!\!\!\sum_{\;\;\;v\in \mathscr{T}_n} \!\!\! L_v\Big)^{-1}
    \!\!-\, \frac{1}{n\Expec{\Lhat}}\Big\rvert
    \sum_{v\in \mathscr{T}_n}\int F(\mathscr{G}_n, x)\,\lambda_{\mathscr{X}_v}(dx)}
   \;\tendsto{\,n\to\infty\,}\; 0 \,,
\end{equation}
where $\Lhat \sim \mathscr{L}(L \biasedby \zeta^M)$ is the total length
of a generic decoration of the critical Galton--Watson tree
$\smash{\hat{\mathscr{T}}}$ such that
$\mathscr{T}_n \sim (\hat{\mathscr{T}} \,|\, \hat{\mathscr{T}} \text{ has } n \text{ vertices})$.
By definition of $x_n$,
\[
  \Expec*{\big}{F(\mathscr{G}_n,x_n)} \;=\;
  \Expec*{\bigg}{\Big(\!\!\!\sum_{\;\;\;v\in \mathscr{T}_n} \!\!\! L_v\Big)^{-1}
  \!\sum_{v\in \mathscr{T}_n} \int\! F(\mathscr{G}_n,x) \,\lambda_{\mathscr{X}_v}(dx)}.
\]
Thus, if \eqref{eqProofLocLimit02} holds we have
\begin{align*}
  \lim_{n\to\infty}\Expec*{\big}{F(\mathscr{G}_n,x_n)}  \;&=\;
  \lim_{n\to\infty}\, \frac{1}{n \, \Expec{\Lhat}}\,
  \Expec{\!\!\sum_{\;\;v\in \mathscr{T}_n} \! \int\! F(\mathscr{G}_n,x) \,\lambda_{\mathscr{X}_v}(dx)}\\
  \;&=\; \frac{1}{\Expec{\Lhat}}\, \lim_{n\to\infty}  \Expec{\frac{1}{n}
  \!\!\sum_{\;\;v\in \mathscr{T}_n} \!\! G(\mathscr{T}_n, v)} \,, 
\end{align*}
where the map
\begin{align*}
  G(\mathscr{T}_n, v)
  \;\defas&\; \int\! F(\mathscr{G}_n, x) \,\lambda_{\mathscr{X}_v}(dx) \\
  \;=&\;\; \Indic{B_{\mathscr{T}_n}(v^*\!,\,r)=\mathbf{t}}
  \mleft(\prod_{\;\;\;\mathclap{v\neq v^*}}\tilde{F}_v(\mathscr{X}_v)\mright)
  \int\! \, F_{v^*}(\mathscr{X}_{v^*}, x) \,\lambda_{\mathscr{X}_{v^*}}(dx)
\end{align*}
is continuous (and bounded by $\Norm[\infty]{F}\ell$) for the local topology, because the maps
$\mathscr{X} \mapsto \int\! F_v(\mathscr{X},x) \,\lambda_{\mathscr{X}}(dx)$
are continuous on $\mathscr{D}$. Since the pointed tree $(\mathscr{T}^*\!,\,
v^*)$ used in the construction of $(\mathscr{G}^\dagger, x^\dagger)$ is the
local weak limit of $\mathscr{T}_n$, we get
\begin{align*}
  \lim_{n\to\infty}  \Expec{\frac{1}{n}
  \!\!\sum_{\;\;v\in \mathscr{T}_n} \!\! G(\mathscr{T}_n, v)}
  \;&=\; \Expec*{\big}{G(\mathscr{T}^*\!,\, v^*)} \\[-2ex]
    &=\; \Expec*{\bigg}{\int\! F(\mathscr{G}^*\!,\, x)\,\lambda_{\mathscr{X}_{v^*}}\!(dx)} \\
    &=\; \Expec{\Lhat}\,\Expec{F(\mathscr{G}^\dagger,x^\dagger)}\,,
\end{align*}
where the last equality holds because, by definition, 
$(\mathscr{G}^\dagger,\,x^\dagger) \sim\mathscr{L}((\mathscr{G}^*\!,\, x^*) \biasedby L^*)$
where $L^* = \int d\lambda_{\mathscr{X}_{v^*}} \sim \Lhat$.
Putting the pieces together, this proves \eqref{eqGoalLocLim}.

Let us now prove~\eqref{eqProofLocLimit02}, i.e.\ show that $\Expec{Y_n} \to 0$,
where
\[
  Y_n \;=\;
  \Big\lvert\Big(\!\!\!\sum_{\;\;\;v\in \mathscr{T}_n} \!\!\! L_v\Big)^{-1}
  \!\!-\, \frac{1}{n\Expec{\Lhat}}\Big\rvert
  \sum_{v\in \mathscr{T}_n}\int F(\mathscr{G}_n, x)\,\lambda_{\mathscr{X}_v}(dx) \,.
\]
For this, on the one hand, note that $F$ is bounded and
\begin{equation} \label{eqProofLocLimit03}
  \sum_{v\in \mathscr{T}_n}\int F(\mathscr{G}_n, x)\,\lambda_{\mathscr{X}_v}(dx)
  \;\leq\;
  \Norm[\infty]{F} \! \sum_{\;\;v \in \mathscr{T}_n} \!\!\! L_v,
\end{equation}
and that on the other hand, since we have assumed that if the total
length of the subnetwork containing $x$ is greater than $\ell$ then
$F(\mathscr{G}, x)=0$, we also have
\[
  \sum_{v\in \mathscr{T}_n}\int F(\mathscr{G}_n, x)\,\lambda_{\mathscr{X}_v}(dx)
  \;\leq\;
   \Norm[\infty]{F}\, n\, \ell\,.
\]
As a result, $Y_n \leq \Norm[\infty]{F}(1 + \ell/\Expec{\Lhat})$. Thus, by dominated
convergence, to prove that $\Expec{Y_n} \to 0$ it suffices to show that $Y_n \to 0$
in probability. Using again~\eqref{eqProofLocLimit03},
\[
  0 \;\leq\; Y_n \;\leq\;
  \Big\lvert n\, \Expec{\Lhat} - \!\!\sum_{\;\;\;v\in \mathscr{T}_n} \!\!\! L_v \Big\rvert
  \; \frac{\Norm[\infty]{F}}{n\Expec{\Lhat}} \,.
\]
Finally, by Lemma~\ref{lemLExpMomentUnderMBiais} we can
apply point (ii) of Lemma~\ref{lemUniformBoundSumF} to the random variables
$(L_v)_{v\in \mathscr{T}_n}$ to get that for any $\epsilon > 0$,
\[
  \Big\lvert n\, \Expec{\Lhat} - \!\!\sum_{\;\;\;v\in \mathscr{T}_n} \!\!\! L_v \Big\rvert
  \;=\;o_p(n^{1/2\,+\,\epsilon}) \,,
\]
concluding the proof.
\end{proof}

\subsection{Geometry of the focal and spinal networks}

In order to complete the picture of the local weak limit of
$(\mathscr{G}_n)_{n\geq 1}$, let us zoom in on the decorations composing
$\mathscr{G}^\dagger$ and describe their distributions more finely than in the
previous section.  Specifically, we are interested in
\begin{itemize}
  \item $(\mathscr{X}^\dagger, x^\dagger)$, the focal network and its distinguished point;
  \item $(\mathscr{X}^\diamond, x^\diamond)$, which we call a \emph{spinal network}.
    This network is distributed as the color network that is
    the parent of the focal network, and its
    distinguished point is the mutation point that corresponds to the root of the focal network.
\end{itemize}

Recall that, by the construction of $\mathscr{G}^\dagger$ given
in Section~\ref{secConstructionLocLim}, these objects satisfy, for any positive measurable functional $F$ on pointed color networks:
\begin{gather}\label{eqFocalNetwork}
  \Expec{F(\mathscr{X}^\dagger, x^\dagger)} \;=\;
     \frac{1}{\Expec{L\zeta^M}}\, \Expec*{\Big}{\zeta^M\!\int F(\mathscr{X},x)\,\lambda_{\mathscr{X}}(dx)}\\
  \Expec{F(\mathscr{X}^\diamond, x^\diamond)} \;=\;
     \frac{1}{\Expec{M\zeta^M}}\,\Expec*{\Big}{\zeta^M\! \sum_{x\in \mathcal{M}}\!\!F(\mathscr{X},x)}\,
\end{gather}
where, by a slight abuse of notation, $\mathcal{M}$ denotes the point process
of mutations on the space $\mathscr{X}$ (previously, $\mathcal{M}$
denoted the point process on $\R$ corresponding to the mutation times).

Our next result shows that focal and spinal networks can be constructed by
``glueing'' two half-networks that are independent conditional on their number
of tips. Moreover, there is an explicit procedure to built these networks from
their profile, i.e.\ from the process giving their number of lineages as a
function of time. Let us start by introducing some notation.

Let $I = [t_0, t_1]$, with $t_0 < 0 \leq t_1$, and let $\gamma=(\gamma_t)_{t\in
I}$ be a càdlàg, positive except at time $t_1$, integer-valued trajectory consisting of a finite
number of $\pm 1$ jumps, starting at $1$ and ending with a jump to $0$.
As usual, let $\mathscr{X}$ denote a generic color network, and let
$X$ be the corresponding logistic branching process.
With a slight abuse of notation, we will write $\Set{X=\gamma}$ for the event on which the trajectory of the Markov chain $X$, started from~$1$ at time $t_0$, is exactly $\gamma$.
\newtext{Note that for any $t_0\in \R$, it makes sense to consider a random network $\mathscr{X}$ started from a single individual at time $t_0$, for which the logistic branching process $X$ started from $1$ at time $t_0$ is the ``number of lineages'' process.
The distribution of $\mathscr{X}$ does not depend on $t_0$.}

\begin{definition} \label{defXscrGamma}
The random pointed network $\mathscr{X}[\gamma]$ is defined as
\[
   \mathscr{X}[\gamma] \;\sim\;
   \mathscr{L}(\,\mathscr{X} \;|\; X = \gamma\,)\,,
\]
and the focal point is chosen uniformly among the points of the networks that
correspond to lineages alive at time~0. See Figure~\ref{figNetworkFromPath} for an illustration.
In the case where $\gamma$ has a downward jump at time $0$, we also define
\[
  \mathscr{X}^{\mathfrak{m}}[\gamma] \;\sim\;
  \mathscr{L}\big(\,\mathscr{X}[\gamma] \;\big|\; \mathscr{X}[\gamma] \text{ has a mutation a time 0}\big)\,,
\]
and the focal point is the point that corresponds to the mutation at
time $0$.  
\end{definition}

Note that it is straightforward to sample the networks $\mathscr{X}[\gamma]$ and 
$\mathscr{X}^{\mathfrak{m}}[\gamma]$ introduced in Definition~\ref{defXscrGamma}:
in the case of $\mathscr{X}[\gamma]$, start with a single lineage, at time $t_0$. Then,
going through the jumps of $\gamma$ in chronological order:
\begin{itemize}
  \item For each jump from $k$ to $k+1$ at time $t$, pick one a lineage
    uniformly at random among the lineages alive at time $t$, and let it split
    into two lineages.
  \item For each jump from $k$ to $k-1$ at time $t$, choose one of the following
    possibilities: with probability $\mu/\rho_k$,
    pick a lineage alive at time $t$ uniformly at random, and let it mutate;
    with probability $\alpha/\rho_k$, pick a lineage similarly and
    let it die; and with probability $1 - (\alpha+\mu)/\rho_k$,
    pick a pair of lineages uniformly at random and let them merge together.
\end{itemize}
The network $\mathscr{X}^{\mathfrak{m}}[\gamma]$ is obtained similarly, with 
the additional constraint that the jump from $k$ to $k-1$ at time $0$ is a mutation.

\begin{figure}[ht!] \centering
  \includegraphics[width=0.9\linewidth]{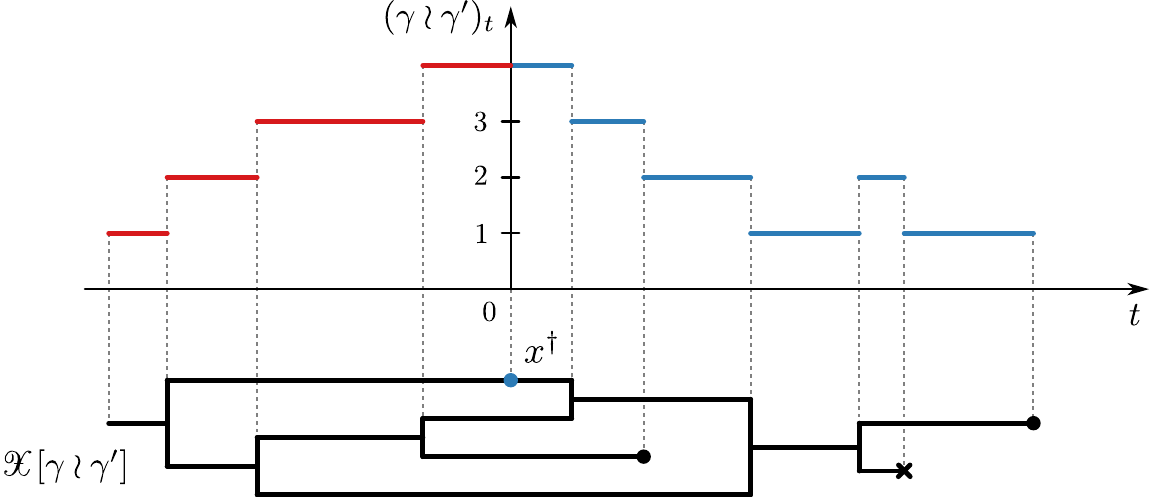}
  \caption{Construction of $\mathscr{X}[\gamma\paste\gamma']$.
The trajectory on top is the back-to-back pasting of two trajectories
$\gamma$ and $\gamma'$ started from $4$: the red part corresponds to the
time-reversal of~$\gamma$ and the blue one to $\gamma'$.
The network $\mathscr{X}[\gamma\paste\gamma']$ is represented in black on
the bottom.  The two black dots correspond to mutations, and the cross to a
death.  The blue dot represents the focal point $x^{\dagger}$, which is
chosen uniformly at random among the lineages alive at time~$0$.
} \label{figNetworkFromPath}
\end{figure}

Now, recall the following notation, introduced in Section~\ref{secLogistic}:
$\nucirc$ is the probability measure on the positive integers characterized by $\nucirc(n) \propto \prod_{k=1}^{n}\frac{1}{\rho_k}$,
see Eq.~\eqref{eqDefNuCirc}; $\mathscr{L}(A\biasedby B)$ denotes the
distribution of $A$ biased by $B$, see Notation~\ref{notationBiasedTrajectory};
and $\gamma \wr \gamma'$ is the back-to-back pasting of two càdlàg
trajectories, see Definition~\ref{defBackToBackPasting}.

\begin{proposition} \label{propFocalSpinalNetworks}
For each $k \geq 0$, let $X'_k$ and $X''_k$ be independent realizations of~$X$
that are started from $k$ and also independent of everything else,
and let $K\sim \nucirc$.
\begin{mathlist}
  \item $(\mathscr{X}^\dagger,x^\dagger)$ is distributed as
  $\displaystyle\mathscr{L}\big(\mathscr{X}[X'_K \paste X''_K] \;\biasedby[big]\; \zeta^{M^\dagger}\,\big)$;
  \item $(\mathscr{X}^\diamond,x^\diamond)$ is distributed as
  $\displaystyle\mathscr{L}\big(\mathscr{X}^{\mathfrak{m}}[X'_K\paste X''_{K-1}] \;\biasedby[big]\; \zeta^{M^\diamond}\big)$;
\end{mathlist}
where $M^\dagger$ denotes the number of mutations of
$\,\mathscr{X}[X'_K \paste X''_K]$ and $M^\diamond$ the number of mutations
of $\,\mathscr{X}^{\mathfrak{m}}[X'_K\paste X''_{K-1}]$.
\end{proposition}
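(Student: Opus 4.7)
The plan is, for each part, to peel off the $\zeta^{M}$ weight in the characterization~\eqref{eqFocalNetwork}, reduce to a statement about the unbiased distribution of the network around a uniformly chosen focal point, and then deduce that statement from Proposition~\ref{propNuCircXfromMut} together with the very definitions of $\mathscr{X}[\gamma]$ and $\mathscr{X}^{\mathfrak{m}}[\gamma]$. Concretely, let $\nu$ and $\nu'$ be the probability distributions on pointed color networks characterized by
\[
  \Expec[\nu]{F(\mathscr{X},x)} \;\defas\; \frac{1}{\Expec{L}}\,\Expec*{\big}{\int\! F(\mathscr{X},x)\,\lambda_{\mathscr{X}}(dx)}
  \quad\text{and}\quad
  \Expec[\nu']{F(\mathscr{X},x)} \;\defas\; \frac{1}{\Expec{M}}\,\Expec*{\big}{\sum_{x\in\mathcal{M}}\! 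F(\mathscr{X},x)} .
\]
Then \eqref{eqFocalNetwork} exactly says that $(\mathscr{X}^\dagger,x^\dagger)$ and $(\mathscr{X}^\diamond,x^\diamond)$ are the $\zeta^{M}$-biasings of $\nu$ and $\nu'$ respectively, so that (i) and (ii) become equivalent to the identities $\nu = \mathscr{L}(\mathscr{X}[X'_K\paste X''_K])$ and $\nu' = \mathscr{L}(\mathscr{X}^{\mathfrak{m}}[X'_K\paste X''_{K-1}])$.

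For both identities, the key idea is to condition on the profile $X=\gamma$. By construction of $\mathscr{X}[\gamma]$ and $\mathscr{X}^{\mathfrak{m}}[\gamma]$, and since given the profile the focal lineage is uniform among the lineages alive at the focal time, the conditional law of $(\mathscr{X},x)$ under $\nu$ (resp.\ $\nu'$) given the time-shifted profile is exactly $\mathscr{X}[\gamma]$ (resp.\ $\mathscr{X}^{\mathfrak{m}}[\gamma]$). So the problem reduces to two profile-level statements: that the profile seen from a uniform mutation is distributed as $X'_K\paste X''_{K-1}$, which is exactly Proposition~\ref{propNuCircXfromMut}; and the Lebesgue analog, namely that the profile seen from a uniform lineage-time is distributed as $X'_K\paste X''_K$. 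For the Lebesgue statement, I would adapt the path-decomposition argument of Appendix~\ref{secPathDecomposition} (used for Proposition~\ref{propNuCircXfromMut}) to the Palm measure of the Lebesgue measure, whose time-intensity is $\Expec{X_t}\,dt$ by Proposition~\ref{propIntensityM} rather than $\mu\,\Expec{X_t}\,dt$. An excursion-based computation in the spirit of the proof of Theorem~\ref{thmM} yields the expected occupation time of state $k$, equal to $k^{-1}\prod_{j=1}^{k}\rho_j^{-1}$, so that the state $X_U$ at a uniform Lebesgue instant is distributed as $\nucirc$; the strong Markov property at $U$ together with the time-reversal calculus of the appendix then identifies the past and the future of the profile as two independent logistic branching processes both started from $X_U$.

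The main obstacle is this Lebesgue analog of Proposition~\ref{propNuCircXfromMut}; everything else amounts to bookkeeping with conditional laws and Palm measures. The key novelty relative to the mutation case is the absence of a jump at the focal instant, so that both halves of the decomposition start from the same state $K$ rather than from $K$ and $K-1$. This requires a slightly different treatment of the Palm shift in the path-decomposition argument, but the overall strategy remains parallel to that used for Proposition~\ref{propNuCircXfromMut}.
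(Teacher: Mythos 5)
Your high-level plan is essentially the paper's: peel off the $\zeta^M$ weight, reduce to a profile-level statement about the trajectory seen from the focal point, and prove that via a path decomposition. Part~(ii) does reduce to Proposition~\ref{propNuCircXfromMut} in the way you describe, and your reduction of~(i) to a ``Lebesgue analog'' of that proposition is the right formulation. But your treatment of that Lebesgue analog -- which you correctly flag as the main obstacle -- has a genuine gap, and also a small logical slip in the occupation-time argument.

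The slip first: the expected occupation time of state~$k$ by~$X$ is indeed $k^{-1}\prod_{j\leq k}\rho_j^{-1}$, but that means the state at a \emph{uniform instant in $[0,T]$} is distributed proportionally to $k^{-1}\prod_{j\leq k}\rho_j^{-1}$, which is \emph{not}~$\nucirc$. The focal point is chosen according to the length measure $\lambda_{\mathscr{X}}$, whose image on the time axis has density $X_t\,dt$, not~$dt$; this introduces an extra factor of $k$ and is what produces $\nucirc(k)\propto\prod_{j\leq k}\rho_j^{-1}$. Your sentence silently conflates these two samplings.

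The genuine gap is the decomposition of the past and future around such a weighted time. Proposition~\ref{propPathDecompFromUniformPoint} in Appendix~\ref{secPathDecomposition} samples a \emph{uniform} instant on the excursion of a positive-recurrent chain; it does not cover sampling with density proportional to $X_t\,dt$, and the strong Markov property at a non-stopping time does not apply directly. The paper resolves this by a time change: it introduces the chain~$Y$ with rates $1$ and $\rho_k$ (i.e.\ $X$ slowed by a factor $k$ in state~$k$, resurrected at~$0$). Under this time change, a uniform $Y$-time corresponds exactly to an $X_t\,dt$-weighted $X$-time and $T_0^Y=L$, so Proposition~\ref{propPathDecompFromUniformPoint} applies verbatim to~$Y$; moreover $Y$ has stationary law $\pi(n)\propto\prod_{j\leq n}\rho_j^{-1}$, which is $\nucirc$ on positive states, and $Y$ is reversible, giving both halves as independent copies of the same chain. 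Converting back to $X$-time then yields $X'_K\paste X''_K$. An equally valid alternative would be to prove a weighted generalization of Proposition~\ref{propPathDecompFromUniformPoint} where the focal instant is drawn with a state-dependent density; either route would fill your gap, but as written, ``adapt the path-decomposition argument'' does not supply the needed step.
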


\begin{remark}
This construction of the focal/spinal networks makes it possible to get
expressions for some characteristics of the local weak limit
$(\mathscr{G}^\dagger, x^\dagger)$. For instance, if we let $N$ be the number
of lineages of the same color as $x^\dagger$ that are alive at the same time as
$x^\dagger$, then by (i) we have, for a normalizing constant $C\geq 0$,
\[
  \Prob{N=k} \;=\; C\,\nucirc(k)\,\Expec*[k]{\normalsize}{\zeta^M}^2,
\]
where $\E_{k}$ denotes the expectation conditional on $\Set{X_0=k}$.
The limitation comes from the fact that if $\zeta\neq 1$, then the expressions
$\Expec*[k]{\normalsize}{\zeta^M}$ are not explicit. However, they can
expressed as continuous fractions, which would it possible compute them numerically
(see Theorem~\ref{thmM}).

Similarly, if $T$ denotes the time since the last mutation
in the ancestry of $x^\dagger$, then for any bounded measurable function $F\colon\R\to\R$,
\[
  \Expec{F(T)\Indic{N=k}} \;=\; C\,\nucirc(k)\,\Expec*[k]{\normalsize}{\zeta^M}\,
  \Expec[k]{F(T_0)\,\zeta^M},
\]
where here $T_0$ denotes the hitting time of $0$ for the process $X$.
\end{remark}

\begin{proof}[Proof of Proposition~\ref{propFocalSpinalNetworks}]
The proof is very similar to that of Proposition~\ref{propNuCircXfromMut}, and
also relies on the path decomposition Markov chains presented in
Section~\ref{secPathDecomposition}.

Let $Y$ be the birth-death chain on $\N$ that goes from $k$ to $k+1$ at rate 1
and from $k$ to $k-1$ at rate $\rho_k$. Note that $Y$ is distributed as
the chain $X$ slowed-down by a factor $k$ when in state~$k$, and resurrected
at rate $1$ when it hits 0. Thus, $Y$ is positive recurrent -- and
therefore, reversible (as any positive recurrent birth-death chain).
Moreover,  it is straightforward to check that the stationary distribution of $Y$ is
the probability distribution $\pi$ defined by
\[
  \pi(n) \;=\; C\prod_{k=1}^n\frac{1}{\rho_k},\qquad n\geq 0,
\]
where $C$ is a normalizing constant.
Note that by definition of $\nu_\circ$, if $K_0$ is a random variable with
distribution $\pi$, then its conditional distribution given $\{K_0 \geq 1\}$
is~$\nu_\circ$.

Let $Y$ be started from $1$, and denote $T_0$ the hitting time of $0$.
Conditional on~$T_0$, let $U$ be uniform on $[0,T_0]$, and set $K\defas Y_U$.
Now, from the trajectory of $(Y_t)_{t\in [0,T_0]}$, construct a path with the
same distribution as $X$ by speeding up time by a factor~$k$ when in state $k$.
Let $V$ be the point corresponding to $U$ in the new timescale.
Note that this shows that $T_0$, the hitting time of $0$ by $Y$, has the same
distribution as $L = \int_0^\infty X_t\, dt$, since in this construction
the two quantities are equal.
  
\newcommand{\XscrTilde}{\scalebox{0.95}{$\tilde{\mathscr{X}}$}}

Now, consider the biased probability measure
\[
  \P_{0}(\cdot) \;\defas\; \frac{\Expec*{\normalsize}{\Indic{\cdot}T_0}}{\Expec{T_0}} \,.
\]
By Proposition~\ref{propPathDecompFromUniformPoint}, under $\P_{0}$ we have
$K\sim \nucirc$ and, conditional on $K$, letting
  $\Theta_u^t\{Y\} \defas(Y_{u + s})_{-u \leq s \leq t - u}$, we have
\[
  \Theta^{T_0}_U\{Y\} \;\overset{d}{=}\; Y'\paste Y''
\]
where $Y'$ and $Y''$ are independent copies of $Y$ started from $K$.
In other words, for any measurable positive functional $F$ of trajectories:
\[
  \ExpecBrackets*{\big}{T_0\, F\big(\Theta^{T_0}_U\{Y\}\big)} \;=\;
  \Expec{T_0}\,\ExpecBrackets*{\big}{F(Y'_K\paste Y''_K)}.
\]
Using our coupling of $X$ and $Y$, and recalling that $T_0 = L$,
this yields:
\begin{equation} \label{eqPathDec1}
  \ExpecBrackets*{\big}{L\, F\big(\Theta^{\tau_0}_V\{X\}\big)} \;=\;
  \Expec{L}\,\ExpecBrackets*{\big}{F(X'_K\paste X''_K)}.
\end{equation}
where $\tau_0$ is the extinction time of $X$, and $X'_K$
and $X''_K$ are independent copies of $X$ started from $K$, as they are
defined in the statement of the proposition.

Let us now define a pointed network $(\XscrTilde, x^*)$ as follows:
conditional on the trajectory of $X$ constructed above, using
Definition~\ref{defXscrGamma} let
$(\XscrTilde,x^*)= \mathscr{X}[\Theta^{\tau_0}_V\{X\}]$. Thus,
$\XscrTilde$ is distributed as a standard color network whose root is
located at time~$-V$.
Recall that by definition, conditional on~$\XscrTilde$, the focal point $x^*$ is chosen uniformly at random
among the points that correspond to the $K$ lineages alive at time~0.

By construction, $x^*$ is uniform on
$\XscrTilde$ with respect to its length measure. Therefore, for any
measurable positive functional $F$ on pointed networks:
\[
  \Expec{\zeta^M\!\! \int\! F(\mathscr{X},x)\,\lambda_{\mathscr{X}}(dx)}\;=\;
  \Expec{L\,\zeta^M\!\! \int\! F(\mathscr{X},x)\,\tfrac{1}{L}\lambda_{\mathscr{X}}(dx)}\;=\;
  \Expec{L\, \zeta^MF(\XscrTilde, x^*)}\,, 
\]
where $M$ is the number of mutations of $\XscrTilde$.
Moreover, by
applying Equation~\eqref{eqPathDec1} to
the functional
$\gamma \mapsto \Expec{\zeta^{M(\mathscr{X}[\gamma])}F(\mathscr{X}[\gamma])}$,
we get
\[
  \Expec{L\,\zeta^MF(\tilde{\mathscr{X}}, x^*)} \;=\;
  \Expec{L}\,\Expec{\zeta^{M^\dagger} F(\mathscr{X}[X'_K\paste X''_k])},
\]
where $M^\dagger$ is the total number of mutations of 
$\mathscr{X}[X'_K\paste X''_k]$. Therefore,
\[
  \Expec{\zeta^{M^\dagger} F(\mathscr{X}[X'_K\paste X''_k])} \;=\;
  \frac{1}{\Expec{L}}\Expec{\zeta^M\! \int F(\mathscr{X},x)\,\lambda_{\mathscr{X}}(dx)} \,.
\]
Finally, taking $F\equiv 1$, we get
$\Expec*{\normalsize}{\zeta^{M^\dagger}} = \Expec*{\normalsize}{L\zeta^M} /
\Expec{L}$, and so comparing the previous display with
Equation~\eqref{eqFocalNetwork} characterizing the law of
$(\mathscr{X}^\dagger, x^\dagger)$, we see that
\[
  \Expec*{\normalsize}{\zeta^{M^\dagger}}^{-1}\, 
  \Expec{\zeta^{M^\dagger} F(\mathscr{X}[X'_K\paste X''_k])} 
  \;=\; \Expec{F(\mathscr{X}^\dagger, x^\dagger)}\,, 
\]
finishing the proof of point~(i).

Point~(ii) is proved similarly, but using
Proposition~\ref{propPathDecompFromTransition} instead of
Proposition~\ref{propPathDecompFromUniformPoint} to view the network from a
uniform mutation point instead of from a uniform point.
\end{proof}

\section*{Acknowledgments}

FB was supported by Dr.\ Max Rössler, the Walter Haefner Foundation and the ETH
Zürich Foundation. 

\addcontentsline{toc}{section}{References}
\bibliographystyle{abbrvnat}
\bibliography{refs}

\appendix

\newpage 

\section*{Appendices}
\addcontentsline{toc}{section}{Appendices}
\refstepcounter{section} 

\subsection{Path decompositions of Markov chains} \label{secPathDecomposition}

In this appendix, we give a description of the trajectory of a Markov chain
as seen from a random point in time. The ideas are standard, but we could
not find the two propositions below in the literature.
Their proofs are elementary, but somewhat tedious; so since they are very
similar we present only the most involved of the two and leave the other one to
the reader. This appendix also contains the proof of Proposition~\ref{propNuCircXfromMut}.

Let $E$ be a countable set and let $Y$ be a continuous-time Markov chain on~$E$
with transition rate matrix $Q = (q_{ij})_{i,j \in E}$, started from the
initial state $0 \in E$.
Let us write $\tau$ for the first jump time of the chain and $T_0$ for the return
time to $0$, i.e.\ $T_0=\inf\Set{t \geq \tau \suchthat Y_t=0}$.  Assume that $Y$ is
positive recurrent, with stationary distribution $\pi$, and define the reversed
chain $Y'$ as the continuous-time Markov chain with transition rate matrix
$Q' = (q'_{ij})_{i,j\in E}$, where
\[
  q'_{ij} \;=\; \frac{\pi_j}{\pi_i}\, q_{ji} \,.
\]
\newtext{We will consider time-shifted trajectories $\gamma$ of the Markov
chain $Y$ killed upon reaching~$0$.  For this purpose, let us formally define a
convenient Skorokhod-like space of trajectories.  Let $E'$ denote
$E\cup\{\Delta\}$, where $\Delta\notin E$ is arbitrary, and let $\Gamma$ denote the
set of càdlàg functions $\gamma:\R\to E'$ such that for all $t$ with $\Abs{t}$
large enough we have $\gamma(t)=\Delta$.  With a slight abuse, for any $a<b\in
\R$ and $\gamma:\COInterval{a,b}\to E$ we identify $\gamma$ with $t\mapsto
\gamma(t)\Indic{t\in \COInterval{a,b}}+\Delta\Indic{t\notin \COInterval{a,b}}$.
The space $\Gamma$ can be endowed with the metric $d$ defined by:
\[
	d(\gamma, \gamma') = \begin{cases}
  1\wedge\int_{\R}\Indic{\gamma(t)\neq \gamma'(t)}\,dt &\text{if }\gamma\text{ and }\gamma'\text{ have the same number of jumps,}\\
  1 &\text{otherwise.}
  \end{cases}
\]
Note that time-shifts are then continuous in $\Gamma$.}

Recall from Notation~\ref{notationBiasedTrajectory} that for two random
variables $A$ and $B$ defined on the same probability space, we write
$\mathscr{L}(A\biasedby B)$ for the distribution of $A$ biased by $B$,
that is, under the probability
$\Prob{\,\cdot\biasedby B} = \Expec*{\normalsize}{\Indic{\cdot} B} \,/\, \Expec{B}$.
Conditional on $T_0$, let $U$ be a uniform random variable on $\ClosedInterval{\tau, T_0}$.
We are interested in the random trajectory
\[
 Z \;\sim\;
 \mathscr{L}\big((Y_{U+t})_{\tau-U \leq t < T_0 - U} \biasedby[big] T_0 - \tau\big),
\]
\newtext{seen as a random variable in $\Gamma$.}
This trajectory can be conveniently described by decomposing it into
its left and right parts.
For this, recall the ``back-to-back pasting'' operation introduced in
Definition~\ref{defBackToBackPasting}, which to two càdlàg functions
$f\colon\COInterval{0,T_f}\to E$ and $g\colon\COInterval{0,T_g}\to E$ associates
the function $f \paste g \colon \COInterval{-T_f, T_g} \to E$ defined by
\[
  f\paste g\colon t \mapsto
  \begin{cases}
    \lim_{s\downarrow t}f(-s) & \text{if } t < 0 \\
    g(t) & \text{if } t\geq 0\,.
  \end{cases}
\]

\begin{proposition} \label{propPathDecompFromUniformPoint}
With the definitions above, we have
\[
Z \;\overset{d}{=}\; Y' \paste Y'',
\]
where $Y'$ is the reversed chain and $Y''$ has the same transitions as $Y$.
Both chains are started from $Y'_0 = Y''_0 \sim \pi^*$, where
$\pi^*_i = \frac{\pi_i}{1-\pi_0}$ for all $i\in E\setminus\{0\}$, and stopped upon
reaching~$0$. Conditional on their common starting point, they are independent.
\end{proposition}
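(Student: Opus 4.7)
The plan is to compute the joint law of the shifted trajectory and the uniform time $U$ under the biased measure directly, using an explicit excursion-by-excursion parameterization. First I would encode a generic excursion of $Y$ from $0$ as a sequence $(i_0,\tau_0,\dots,i_{n-1},\tau_{n-1}, i_n=0)$ of successive states $i_j\neq 0$ and holding times, whose likelihood is
\[
  \mathscr{E}(\gamma) \;=\; \frac{q_{0,i_0}}{q_0}\prod_{j=0}^{n-1} q_{i_j,i_{j+1}}\,e^{-q_{i_j}\tau_j} \,.
\]
Since conditionally on the trajectory $U$ is uniform on $[\tau, T_0]$, the bias by $T_0-\tau$ exactly cancels the uniform density: under $\mathscr{L}(\,\cdot\biasedby T_0-\tau)$ the joint law of $(\gamma,U)$ is proportional to $\mathscr{E}(\gamma)\,du$ on $\Set{u\in[\tau,\tau+\sum_j\tau_j]}$, normalized by $\Expec{T_0-\tau}$.

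On the event $\Set{U-\tau\in[S_k,S_k+\tau_k)}$, with $S_k=\tau_0+\cdots+\tau_{k-1}$, the next step is a change of variables that replaces $(\tau_k,U)$ by the age $s=U-\tau-S_k$ and the residual time $r=\tau_k-s$ in the current state $i_*\defas i_k$. The Jacobian is $1$, and the exponential factor splits as $e^{-q_{i_*}\tau_k}=e^{-q_{i_*}s}\,e^{-q_{i_*}r}$. The density then groups naturally into a \emph{past factor} gathering the rates $q_{0,i_0},q_{i_0,i_1},\dots,q_{i_{k-1},i_*}$, the holding times $\tau_0,\dots,\tau_{k-1}$, and $e^{-q_{i_*}s}$; and a \emph{future factor} gathering the rates $q_{i_*,i_{k+1}},\dots,q_{i_{n-1},0}$, the holding times $\tau_{k+1},\dots,\tau_{n-1}$, and $e^{-q_{i_*}r}$.

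The key step is then a telescoping reversibility computation: writing $\pi_i q_{i,j}=\pi_j q'_{j,i}$ and chaining along the past,
\[
  q_{0,i_0}\,q_{i_0,i_1}\cdots q_{i_{k-1},i_*} \;=\; \frac{\pi_{i_*}}{\pi_0}\, q'_{i_*,i_{k-1}}\cdots q'_{i_1,i_0}\,q'_{i_0,0}\,.
\]
Combined with the identity $q'_i=q_i$ (a consequence of the stationarity equation $\sum_j \pi_j q_{j,i}=\pi_i q_i$), this shows that the past factor equals $\pi_{i_*}/\pi_0$ times the likelihood, for the chain $Y'$ killed at $0$, of the reversed-time trajectory $i_*\to i_{k-1}\to\cdots\to i_0\to 0$ with holding times $s,\tau_{k-1},\dots,\tau_0$. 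The future factor is already in the right form for $Y$ started from $i_*$ and killed at $0$. Summing the $\pi_{i_*}/\pi_0$ factor over $i_*\neq 0$ and matching the total mass with $q_0\,\Expec{T_0-\tau}=(1-\pi_0)/\pi_0$ both recovers the normalization and identifies the marginal of $Y_U$ as $\pi^*$; conditional on $Y_U=i_*$ the past and future are independent with the claimed laws, which is exactly the decomposition $Y'\paste Y''$.

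The only real obstacle is bookkeeping: performing the change of variables cleanly and matching the forward-chain likelihood of the past segment, after the telescoping reversibility identity, with the reversed-chain likelihood of the time-reversed path (including the correct reordering of states and holding times and the final jump to $0$). Once this matching is in place, the factorization is immediate and the normalization constant falls out of the mean-cycle identity $\Expec{T_0-\tau}=(1-\pi_0)/(\pi_0 q_0)$.
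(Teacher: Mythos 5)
Your argument is correct and is exactly what the paper intends: the paper leaves this proposition to the reader, presenting instead the proof of the sibling Proposition~\ref{propPathDecompFromTransition} and noting the two are very similar. Your proof follows that model proof step for step — write out the excursion density, split at the focal state, telescope via $\pi_i q_{ij} = \pi_j q'_{ji}$ and $q'_i=q_i$, and identify the normalization through $\Expec{T_0-\tau} = (1-\pi_0)/(\pi_0 q_0)$ — with the age/residual change of variables at the focal holding time being the one extra ingredient needed here because the focal time is a continuous time rather than a jump time; you handle that correctly (Jacobian $1$, exponential factor splits).
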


As discussed above, the proof is left to the reader.

Assume now that the transitions of the process $Y$ are associated with weights:
each transition $i\to j$ has weight $w_{ij} \geq 0$.  Define a random measure
$\mathcal{W}$ by
\[
  \mathcal{W} \;=\;\;
  \sum_{\mathclap{\;\;\;i\to j \text{ at }t}} w_{ij}\,\delta_{t},
\]
where the sum is over all (finitely many) $(i, j, t)$ such that $Y$ jumps from $i$ to $j$ at
time~$t$ along a trajectory started from $0$ and stopped upon reaching $0$.
Define $W=\int d\mathcal{W}$ as the total weight accumulated along the
trajectory and assume that $0<\Expec{W} < \infty$.  We are now interested in
the distribution of
\[
  Z^w \;\sim\;
  \mathscr{L}\big((Y_{U+t})_{\tau-U \leq t < T_0 - U} \biasedby[big] W \big),
\]
where the conditional distribution of $U$ given $\mathcal{W}$ is $\frac{1}{W}\mathcal{W}$.
In other words, the distribution of $Z^w$ is characterized by
\[
  \Expec*{\big}{F(Z^w)} \;=\; \frac{1}{\Expec{W}}\,
  \ExpecBrackets{\sum_{\mathrlap{\!\!\!\!\! i\to j \text{ at } u}}
  w_{ij}\, F\big((Y_{u+t})_{\tau-u\leq t < T_0-u}\big)},
\]
for any measurable bounded functional $F$.

\begin{proposition} \label{propPathDecompFromTransition}
We have
\[
  Z^w \;\overset{d}{=}\; Y' \paste Y'',
\]
where $Y'$ is the reversed chain and $Y''$ has the same transitions as $Y$.
Conditional on their starting points, $Y'$ and $Y''$ are independent.
They are started from a pair of states $(Y'_0, Y''_0)$ chosen
according to the probability
\[
  \nu(i,j) \;\defas\; \frac{\pi_i\,  w_{ij}\,  q_{ij}}{\pi_0\, \Expec{W}},
  \qquad i \neq j 
\]
and stopped upon reaching $0$. If one of the chains is started from $0$,
its trajectory is reduced to a single point.
\end{proposition}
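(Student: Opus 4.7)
I plan to prove this by extending the proof of Proposition~\ref{propPathDecompFromUniformPoint}, replacing the uniform pick of a time in $[\tau,T_0]$ by a pick of a jump weighted by $w$. The starting point is the standard CTMC infinitesimal identity
\[
\Expec{W\,F(Z^w)} \;=\; \sum_{i\neq j}\! w_{ij}\,q_{ij}\!\int_0^\infty\!\!\Expec{\Indic{Y_u=i,\;\tau\leq u<T_0}\,F\bigl((Y_{u+t})_{\tau-u\leq t<T_0-u}\bigr)}\,du,
\]
which separates the contribution of each transition type $(i,j)$ by integrating over the jump time $u$.

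Once in this form, the analysis splits into a forward and a backward piece. For the forward piece I would apply the strong Markov property at $u$: given $Y_u=j$ following a jump from $i$ at time $u$, the process $(Y_{u+t})_{t\geq 0}$ up to its first visit to $0$ is an independent copy of $Y$ started at $j$. For the backward piece I would invoke a cycle-reversal identity: conditional on $Y_u=i$ and on $u$ lying inside an excursion of $Y$ from $0$ to $0$, the time-reversed past $(Y_{u-s})_{0\leq s\leq u}$ has the law of the reverse chain $Y'$ started at $i$ and run until first hitting $0$. Detailed balance $\pi_i q_{ij}=\pi_j q'_{ji}$ at the embedded-jump-chain level, together with the equality $q_i=q'_i$ of the total rates (which makes the holding-time exponentials identical under $Y$ and under $Y'$), is precisely what turns the product of forward-time densities along a $Y$-excursion into the corresponding product of $Y'$-densities along the reversed path.

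Combining the two pieces yields, for each $(i,j)$, a contribution of the form
\[
w_{ij}\,q_{ij}\,\Bigl(\int_0^\infty\!\Prob{Y_u=i,\;\tau\leq u<T_0}\,du\Bigr)\,\Expec{F(Y'\paste Y'')\mid Y'_0=i,\,Y''_0=j},
\]
where the integral in parentheses is the expected occupation time of state $i$ during a single excursion, equal to $\pi_i/(\pi_0 q_0)$ by Kac's lemma. After dividing by $\Expec{W}$, the prefactor identifies the joint law of $(Y'_0,Y''_0)$ as proportional to $\pi_i\,w_{ij}\,q_{ij}$, i.e.\ as $\nu$, and the conditional independence obtained from the strong Markov property gives the announced decomposition $Y'\paste Y''$.

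The main obstacle is the cycle-reversal identity for the past piece. Although it is standard in spirit, it requires tracking the conditioning ``$u$ is inside an excursion and $Y_u=i$'' carefully, and checking that the reversed embedded sub-path is the embedded path of $Y'$ from $i$ to $0$. I would handle this by writing out the density of a typical past (as a product of rates $q_{a,b}$ and exponential holding-time factors) and using the balance relation to rewrite it as the density of the corresponding $Y'$-trajectory read in reverse time. A minor technical point is the edge case where the selected jump leaves from or lands at state $0$ (i.e.\ the very first or very last jump of the excursion): there, one of the two half-trajectories collapses to a single point, which is precisely the convention noted at the end of the proposition.
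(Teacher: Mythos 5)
Your approach is valid, but it is organized quite differently from the paper's. The paper proves the result by writing out the full probability density of $Z^w$ at a specific pasted trajectory $f\paste g$ — a product of transition rates $q_{ab}$ and holding-time exponentials — and then algebraically rearranging this product using $q'_{ij}=\frac{\pi_j}{\pi_i}q_{ji}$ (together with the fact that $q'_i=q_i$, which follows from stationarity) so that it factors as $\nu(i,j)\,\Prob[i]{Y'\in df}\,\Prob[j]{Y''\in dg}$. You instead propose a Palm-type rate identity, the strong Markov property for the forward piece, a cycle-reversal lemma for the backward piece, and Kac's formula for the occupation-time prefactor. This is a sound alternative, and more structured conceptually; but note that both your forward and backward pieces would, when made rigorous, reduce to the very same density manipulation the paper performs all at once, so the approaches are not as distant as they may look. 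Two points to watch if you carry this out. First, your ``infinitesimal identity'' is imprecise as written: the indicator $\Indic{Y_u=i}$ is incompatible with evaluating $F$ at the shifted trajectory (which should start at $j$, the post-jump state, at $t=0$); the correct Mecke/compensation statement involves a fresh copy of $Y$ started from $j$ after time $u$ pasted to the past before $u$, which is exactly where the strong Markov property enters — you invoke it, but the preliminary identity should be phrased so it is already consistent with it. Second, your computation via Kac's formula gives the occupation time $\pi_i/(\pi_0 q_0)$ and hence a prefactor $\pi_i w_{ij}q_{ij}/(\pi_0 q_0\,\Expec{W})$, carrying an extra $1/q_0$ relative to the displayed $\nu$ in the statement; this is harmless since you only identify the law up to proportionality (and indeed $\nu$ must in any case be renormalized to be a probability), but be aware of the discrepancy so that you do not try to force an exact match.
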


\begin{proof}
The proof is a series of elementary Markov chain calculations.
We use the standard notation $q_i=\sum_j q_{ij}$.

Consider two starting states $i\neq j$, a trajectory $f$ from~$i$ to~0 and
and a trajectory~$g$ from~$j$ to~0. Let $\gamma_0, \ldots, \gamma_{\nf+1}$ and
$\xi_0, \ldots, \xi_{n_g+1}$ be the successive states visited by $f$ and~$g$,
respectively, and let $x_0, \ldots, x_{\nf}$
and $y_0, \ldots, y_{n_g}$ be the corresponding holding times.
Writing $\Prob{Z^w\in dh}$ for the probability density of $Z^w$
evaluated in a specific trajectory~$h$,
by definition of $Z^w$, we have
\begin{align*}
  & \Expec{W}\, \Prob*{\big}{Z^w\in d(f\paste g)} \\
  &\quad=\; w_{ij}
  \bigg(\prod_{k=0}^{\nf}\! q_{\gamma_{k+1}\gamma_{k}}\bigg)
  q_{ij}
  \bigg(\prod_{k=0}^{n_g}\! q_{\xi_{k}\xi_{k+1}}\bigg)
  \exp\mleft(-\sum_{k=0}^{\nf}\! q_{\gamma_k}x_k
            -\sum_{k=0}^{n_g}\! q_{\xi_k}y_k\mright)\, dx\,dy,
\end{align*}
where $dx = dx_0\cdots dx_{\nf}$ and $dy = dy_0\cdots dy_{n_{\!g}}$.
Rearranging the terms, we get
\begin{align*}
  & \Expec{W}\,\Prob*{\big}{Z^w\in d(f\paste g)} \\
  &\quad=\; \frac{\pi_i}{\pi_0}\, w_{ij}\, q_{ij}
     \times \bigg(\prod_{k=0}^{\nf} \frac{\pi_{\gamma_{k+1}}}{\pi_{\gamma_k}}\, q_{\gamma_{k+1}\gamma_{k}}\bigg)
     \exp\mleft(-\sum_{k=0}^{\nf}q_{\gamma_k}x_k\mright) \,dx\\
  & \qquad\qquad\qquad \times \bigg(\prod_{k=0}^{n_g} \! q_{\xi_{k}\xi_{k+1}}\bigg)
  \exp\mleft(-\sum_{k=0}^{n_g}\! q_{\xi_k}y_k\mright) \,dy\\
  &\quad=\; \frac{\pi_i}{\pi_0}\, w_{ij}\,  q_{ij} \;\Prob[i]{Y'\in df} \;\Prob[j]{Y''\in dg},
\end{align*}
which concludes the proof.
\end{proof}

We close this appendix by proving Proposition~\ref{propNuCircXfromMut}
from the main text, whose statement we reproduce here for convenience.
Recall that $\Xmut$ denotes the process $X$ ``as seen from a uniform mutation time'',
that is,
\[
  \Xmut \;\sim\; \mathscr{L}\big((X_{U+t})_{-U\leq t < T-U} \biasedby[big] M\big)\,,
\]
where $U$ is chosen uniformly at random
among the atoms of the point process $\mathcal{M}$ giving the times of the
mutations associated to the trajectory of $X$.

\begin{repproposition}{propNuCircXfromMut}
Let $\nucirc$ be the probability distribution on the positive integers
defined by
\[
  \nucirc(n) \;=\; C \prod_{k=1}^{n}\frac{1}{\rho_k}\,,
\]
with $C$ the corresponding normalizing constant. Let $\TweakSpacingL{K \sim \nucirc}$ and,
conditional on~$K$, let $X'\!$ and $X''\!$ be two independent realizations
of the logistic branching process~$X$ started from $X'_0 = K$ and $X''_0 = K - 1$.
Then,
\[
  \Xmut \;\overset{d}{=}\; X'\paste X''.
\]
\end{repproposition}

\begin{proof}
Let $X^\circ$ be a Markov chain started from $0$ with the same transition rates
as $X$, except for an additional ``rebirth'' transition from 0 to 1 at an
arbitrary positive rate. Thus, $X^\circ$ is positive recurrent, and it is
straightforward to check that its stationary distribution $(\pi_i)_{i \geq 0}$
satisfies, for $i \geq 1$, 
\begin{equation} \label{eqProofPropNuCircXfromMut}
  \pi_i \;\propto\; i^{-1} \prod_{k = 1}^i \frac{1}{\rho_k}\,.
\end{equation}
Now, since the excursions of $X^\circ$ away from 0 are distributed as the
restriction of~$X$ to~$[0, T]$, we have
\[
  \Xmut \;\sim\; \mathscr{L}\big((X^\circ_{U+t})_{\tau-U\leq t < T^\circ-U} \biasedby[big] M^\circ\big)\,,
\]
where $\tau$ is the first jump time of $X^\circ$; $T^\circ$ its time of first
return to 0; $M^\circ$ its number of mutations on $[\tau, T^\circ]$; and
$U$ the time of a mutation chosen uniformly at random among the mutations
on $[\tau, T^\circ]$. Moreover, each downward jump of $X^\circ$ from $i$ to $i-1$
corresponds to a mutation with probability $\mu / \rho_i$, independently.
Therefore, if conditional on $X^\circ$ we let $\mathcal{W}$ be the measure
defined by
\[
  \mathcal{W} \;=\; \sum_{\quad\mathclap{i\,\searrow \text{ at } t}} \mu \rho_i^{-1}\, \delta_t\,,
\]
where the sum is over all $(i, t)$ such that $X^\circ$ goes from $i$ to $i-1$
at time $t \in [\tau, T^\circ]$, then, conditional on $X^\circ\!$, 
$\mathcal{W}$ is the intensity measure of $\mathcal{M}^\circ$, the Poisson
point process of mutations on $[\tau, T^\circ]$. Thus,
$\Expec{W} = \Expec{M}$, where $W = \int d\mathcal{W}$, 
and for any bounded measurable functional $f$,
\[
  \Expec{\sum_{\,t \in \mathcal{M}^\circ}\! f(X^\circ\!,\, t) \given X^\circ}
  \;=\; \int \! f(X^\circ\!,\, t) \,\mathcal{W}(dt) \;=\;  \sum_{\quad\mathclap{i\,\searrow \text{ at } t}} \mu \rho_i^{-1} f(X^\circ, t)\,,
  \]
As a result, conditional on $X^\circ$, letting $V\sim \frac{1}{W} \mathcal{W}$ we have
\[
  \mathscr{L}\big((X^\circ_{U+t})_{\tau-U\leq t < T^\circ-U} \biasedby[big] M^\circ\big)
  \;=\;
  \mathscr{L}\big((X^\circ_{V+t})_{\tau-V\leq t < T^\circ-V} \biasedby[big] W\big) \,.
\]

Therefore, by applying Proposition~\ref{propPathDecompFromTransition} to
$(X^\circ\!,\, \mathcal{W})$, 
we get that $\Xmut \overset{d}{=} X'\wr X''$, where:
\begin{itemize}
  \item $X'$ and~$X''$ are independent, $X''$ has the same transition rates
    as $X^\circ\!$, and $X'$ has the same transition rates as the time-reversed chain of $X^\circ\!$. 
  \item $(X', X'')$ is started from $(i, i-1)$ with probability
    \[
      \frac{\pi_i\, (\mu \rho_i^{-1})\, (i \rho_i)}{\pi_0\,\Expec{W}}
      \;=\;
      C\prod_{k = 1}^i \frac{1}{\rho_k} \,.
    \]
    where $C$ is the normalization constant (and we have used the expression
    of~$\pi_i$ given in \eqref{eqProofPropNuCircXfromMut} to get the right-hand
    side).
\end{itemize}
Finally, since every positive recurrent birth-death chain is reversible -- a
standard fact that follows from Kolmogorov's criterion for time-reversibility
-- $X'$ in fact has the same transition rates as $X^\circ$; and since $X'$ and
$X''$ are both killed upon reaching~0, these two chain also have the same
transitions rates as $X$.
\end{proof}

\subsection{Proofs for Section~\ref{secTopology}} \label{appProofLemLeGall}

In this appendix, we prove Lemma~\ref{lemLeGall} and
Proposition~\ref{propKeyToCRT}, whose statements we will reproduce
below for convenience.

Let us start by recalling how the notions of correspondence and distortion
can be used to tackle Gromov--Hausdorff--Prokhorov convergence more conveniently
than by working directly with the definition of the metric.
Note that, in order to deal with the Prokhorov component of the metric,
we will use definitions that differ slightly from
those traditionally used in the Gromov--Hausdorff setting.

Let $(\mathscr{X}, r, d, \lambda)$ and $(\mathscr{X}', r', d', \lambda')$ be
two rooted compact metric probability spaces.
Since we view a subset $R\subset \mathscr{X}\times \mathscr{X}'$ as a binary relation,
we write $xR\,x'$ to indicate that $(x,x')\in R$. For any $A\subset \mathscr{X}$,
we let $AR = \Set{x'\in \mathscr{X}' \suchthat \exists x\in A \text{ with }xRx'}$
and we define $RB$ similarly for any subset $B\subset \mathscr{X}'$.
In what follows, we use the term \emph{correspondence
from $\mathscr{X}$ to $\mathscr{X}'$} to refer to any
nonempty subset $R\subset \mathscr{X}\times\mathscr{X}'$.
Note that it is sometimes required that $R$ satisfies
$\mathscr{X}\!R=\mathscr{X}'$ and $\mathscr{X}=R\mathscr{X}'$ to be called
a correspondence, but in our setting it will be more convenient to drop
this restriction.

We now introduce a modified version of the notion of distortion
of a correspondence. In what follow, $A^\epsilon$ denotes the
$\epsilon$-neighborhood of a set $A$.

\begin{definition} \label{defDistortion}
The \emph{Prokhorov distortion} of a correspondence $R$ from a compact metric
probability space $(\mathscr{X}, d, \lambda)$ to another $(\mathscr{X}', d', \lambda')$, which we denote by $\dis(R)$, is the infimum of
the $\epsilon > 0$ such that:
\begin{mathlist}
  \item For all $(x,x')\in R$ and $(y,y')\in R$, $\Abs{d(x,y)-d'(x',y')} \leq \epsilon$.
  \item $(\mathscr{X}\!R)^{\epsilon/2}=\mathscr{X}'$ and $\mathscr{X}=(R\mathscr{X}')^{\epsilon/2}$.
  \item For any Borel set $A \subset \mathscr{X}$, $\lambda'((AR)^{\epsilon/2})+\epsilon \geq \lambda(A)$.
\qedhere
\end{mathlist}
\end{definition}

The usual notion of distortion only takes (i) into account: (ii) is added to be
able to relax the usual definition of correspondence, as discussed above; and
(iii) controls the Prokhorov part of the Gromov--Hausdorff--Prokhorov topology.
It may seem that by replacing the $\epsilon/2$ with $\epsilon$ would yield
a more natural definition; however, this $\epsilon/2$ makes several
calculations neater. Finally, note that because we have only imposed one
inequality in (iii), this definition is not symmetric:
if we let $R^{-1} = \Set{(x', x) \suchthat (x, x') \in R}$ then a priori
$\dis(R) \neq \dis(R^{-1})$.

As the next lemma shows, correspondences and their distortions provide a simple
characterization of the (rooted) Gromov--Hausdorff--Prokhorov convergence.

\begin{lemma} \label{lemCorrespToGHP}
Let $(\mathscr{X}, r, d, \lambda)$ and $(\mathscr{X}', r', d', \lambda')$ be
two rooted compact metric probability spaces.  If there exists a
correspondence $R\subset \mathscr{X}\times \mathscr{X}'$ satisfying $rR\,r'$
and $\dis(R) \leq \epsilon$, then
$\GHPdist(\mathscr{X}, \mathscr{X}')\leq \epsilon$.
\end{lemma}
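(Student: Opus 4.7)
The plan is to unwrap the $\GHPdist$ infimum directly: produce an explicit (pseudo)metric $\delta$ on $\mathscr{Y}=\mathscr{X}\sqcup\mathscr{X}'$ from the correspondence $R$ and then verify the four bullet points (i)--(iv) in the definition of $\GHPdist(\mathscr{X},\mathscr{X}')\leq \epsilon$. The natural choice is to set $\delta=d$ on $\mathscr{X}$ and $\delta=d'$ on $\mathscr{X}'$ (so (i) is automatic), and for cross pairs to put
\[
  \delta(x,x')\;=\;\inf_{(y,y')\in R}\Big(d(x,y)\,+\,\tfrac{\epsilon}{2}\,+\,d'(y',x')\Big),\qquad x\in\mathscr{X},\,x'\in\mathscr{X}'.
\]
The constant $\epsilon/2$ is chosen so that the $\epsilon/2$ tolerances in items (ii) and (iii) of the distortion definition combine with the two ``legs'' of $\delta$ to produce the $\epsilon$ budget we need. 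Because $\delta(x,x')\geq \epsilon/2>0$ (assuming $\epsilon>0$; the case $\epsilon=0$ is immediate), this is a genuine metric rather than merely a pseudometric, so no quotienting is required.

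Once $\delta$ is in hand, (ii) is a one-line check: plug $(y,y')=(r,r')\in R$ into the infimum to get $\delta(r,r')\leq \epsilon/2\leq \epsilon$. For (iii), take $x\in\mathscr{X}$; property (ii) of $\dis(R)\leq\epsilon$ gives $(y,y')\in R$ with $d(x,y)<\epsilon/2$, whence $\delta(x,y')<\epsilon$ and $\mathscr{X}\subset(\mathscr{X}')^\epsilon$; the symmetric inclusion is analogous. For (iv), using that $\lambda_{\mathscr{Y}}$ is supported on $\mathscr{X}$, it suffices to test Borel $A\subset \mathscr{X}$; the key observation is the set inclusion
\[
  (AR)^{\epsilon/2}\;\subset\;A^\epsilon\cap\mathscr{X}',
\]
which follows because for any $y'\in(AR)^{\epsilon/2}$ there exists $z'\in AR$ and $x\in A$ with $(x,z')\in R$ and $d'(z',y')<\epsilon/2$, giving $\delta(x,y')<\epsilon$. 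Combining this with condition (iii) in Definition~\ref{defDistortion} applied to $A$ delivers $\lambda(A)\leq \lambda'(A^\epsilon\cap\mathscr{X}')+\epsilon$.

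The only genuinely delicate step is the triangle inequality for $\delta$ in the mixed cases. For $x_1,x_2\in\mathscr{X}$ and $x'\in\mathscr{X}'$, the inequalities $\delta(x_1,x')\leq d(x_1,x_2)+\delta(x_2,x')$ follow from the triangle inequality inside $\mathscr{X}$ applied inside the infimum. The harder inequality $\delta(x_1,x_2)\leq \delta(x_1,x')+\delta(x',x_2)$ requires item (i) of the distortion definition: pick near-optimal $(y_1,y_1'),(y_2,y_2')\in R$ for the two cross-distances and note that
\[
  \delta(x_1,x')+\delta(x',x_2)\;\geq\;d(x_1,y_1)+d(y_2,x_2)+\epsilon+d'(y_1',y_2')\;\geq\;d(x_1,y_1)+d(y_1,y_2)+d(y_2,x_2),
\]
the second inequality using $|d(y_1,y_2)-d'(y_1',y_2')|\leq \epsilon$. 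The symmetric mixed case with two primed points is handled identically. This is the step where the precise value of the additive constant $\epsilon/2$ (and the factor of $\epsilon$, not $2\epsilon$, in the distortion of $R$) is crucial, so I expect to spend most of the care here; the rest of the proof is bookkeeping.
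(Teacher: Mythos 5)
Your proof is correct and follows essentially the same route as the paper's: the same metric $\delta$ on $\mathscr{X}\sqcup\mathscr{X}'$ (built from $R$ with an $\epsilon/2$ offset on cross-pairs), followed by direct verification of the four clauses of the $\GHPdist$ definition. The paper asserts "it is readily checked that $\delta$ is a metric" and moves on; you spell out the only nontrivial part, the mixed-case triangle inequality via near-optimal pairs in $R$ and item (i) of the distortion, but this is an elaboration, not a divergence.
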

\begin{proof}
The proof is a straightforward adaptation of the classic analogous result
for the usual notion of correspondence and the
Gromov--Hausdorff metric, see for instance \cite[Theorem 4.11]{Eva08} 
  
Let us define a metric $\delta$ on the disjoint union
$\mathscr{X}\sqcup \mathscr{X}'$ by:
  \begin{itemize}
    \item $\forall x,y\in \mathscr{X}, \delta(x,y)= d(x,y)$;
    \item $\forall x',y'\in \mathscr{X}', \delta(x',y')= d'(x,y)$;
    \item $\forall (x,x')\in \mathscr{X}\!\times\mathscr{X}',
      \delta(x,x')=\epsilon/2+\inf\Set{d(x,y)+d'(x',y')\suchthat (y,y')\in R}$.
  \end{itemize}
It is readily checked that $\delta$ is indeed a metric. Moreover, for any
$(x,x')\in R$, we have $\delta(x,x') = \epsilon/2$.  This implies that
$\delta(r,r') \leq \epsilon/2$, and that for each Borel set $A \subset \mathscr{X}$,
$(AR)^{\epsilon/2} \subset A^{\epsilon}$.  Therefore, it follows from point~(ii) of
Definition~\ref{defDistortion} that the Hausdorff distance
between $\mathscr{X}$ and $\mathscr{X}'$ in $(\mathscr{X} \sqcup \mathscr{X}', \delta)$
is at most $\epsilon$.  Similarly, it follows from point~(iii) 
of Definition~\ref{defDistortion} that
the Prokhorov distance between the extensions of $\lambda$ and
$\lambda'$ to $\mathscr{X} \sqcup \mathscr{X}'$ is also at most $\epsilon$.
  
Therefore, $\GHPdist(\mathscr{X},\mathscr{X}') \leq \epsilon$ and the
proof is complete.
\end{proof}

We are now ready to prove Lemma~\ref{lemLeGall}.
First, recall from Definition~\ref{defRTree} how to obtain
a rooted compact metric probability space $\mathscr{T}_h$ from
a nonnegative càdlàg function $h$ such that {$h(0) = 0$}.

\begin{replemma}{lemLeGall}
The map $h\in D \mapsto \mathscr{T}_h\in \mathbb{M}$ is continuous.
In other words, if $h_1, h_2, \ldots$ and $h$ satisfy the hypotheses of Definition~\ref{defRTree}, then
\[
  h_n \longrightarrow\; h\; \text{ in } D
  \quad\implies\quad
  \mathscr{T}_{h_n} \longrightarrow\; \mathscr{T}_h
  \;\text{ in }(\mathbb{M},\, \GHPdist).
\]
\end{replemma}

\begin{proof}
It is classic \cite{Bil99} that the Skorokhod topology can be metrized by the following metric:
for two càdlàg functions $f$ and $g:[0,1]\to \R$, define
\[
d_{\mathrm{Sk}}(f,g) \;=\; \inf_{\theta} \big((\mathrm{Lip}(\theta)-1)\vee \Norm[\infty]{f-g\circ\theta}\big),
\]
where $\theta$ runs over the set of continuous increasing bi-Lipschitz bijections from $[0,1]$ into itself, and
\[
  \mathrm{Lip}(\theta) \;\defas \sup_{0\leq x<y\leq 1} \Bigg(\frac{\theta(y)-\theta(x)}{y-x} \vee\frac{y-x}{\theta(y)-\theta(x)}\Bigg).
\]
Note that, since $\theta(0)=0$ and $\theta(1)=1$ for every such bijection $\theta$, 
if $\mathrm{Lip}(\theta) < 1+\epsilon$ then $\Norm[\infty]{\theta-\mathrm{Id}} < \epsilon$, where $\mathrm{Id}$ is the identity map.

Let $f$ and $g$ be two nonnegative càdlàg functions such that $f(0) = g(0) = 0$, 
and let $(\mathscr{T}_f, \rf, d_f, \lambda_f)$ and
$(\mathscr{T}_g, \rg, d_g, \lambda_g)$ be the corresponding 
metric spaces. We will show that $d_{\mathrm{Sk}}(f,g) < \epsilon \implies
\GHPdist(\mathscr{T}_f, \mathscr{T}_g) \leq 4\epsilon$.
By definition of $d_{\mathrm{Sk}}$, let us choose a continuous increasing
bijection $\theta:[0,1]\to[0,1]$ such that
$\mathrm{Lip}(\theta) < 1+\epsilon$ and $\Norm[\infty]{f-g\circ\theta} < \epsilon$.

Let $\phi \colon [0, 1] \to \mathscr{T}_f$ denote the
quotient map from $[0, 1]$ to $\mathscr{T}_f$. Note that, because we have
completed the quotient space $[0, 1] /\!\! \sim_{d_f}$ in order to obtain
$\mathscr{T}_f$, this function $\phi$ may be non-surjective, but that $\phi([0, 1])$
is dense in $\mathscr{T}_f$.
Define $\psi:[0,1] \to \mathscr{T}_g$ similarly.

Let $R\subset \mathscr{T}_f\times \mathscr{T}_g$ be the correspondence defined by:
\[
  xR\,x' \;\iff\; \exists t \in [0,1] \st  x = \phi(t)\text{ and } x' = \psi(\theta(t)).
\]
By Lemma~\ref{lemCorrespToGHP}, to show that
$\GHPdist(\mathscr{T}_f,\mathscr{T}_g) \leq 4\epsilon$, it is sufficient to
check that $\dis(R) \leq 4\epsilon$ and that $\rf R\,\rg$.
The latter point is trivial since $\rf=\phi(0)$ and $\rg=\psi(0)=\psi(\theta(0))$, therefore we need to check that the following three points hold:
\begin{mathlist}
  \item $(\mathscr{T}_fR)^{2\epsilon} = \mathscr{T}_g$ and $\mathscr{T}_f = (R\,\mathscr{T}_g)^{2\epsilon}$;
  this is also immediate since $\mathscr{T}_fR=\psi([0,1])$
  is dense in $\mathscr{T}_g$ and since $R\,\mathscr{T}_g=\phi([0,1])$ is dense in $\mathscr{T}_f$.
  \item For all $(x,x'),\, (y,y')\in R$, we have $\Abs{d_f(x,y)-d_g(x',y')} \leq 4\epsilon$.
  \item For any Borel subset $A\subset \mathscr{T}_f$, we have $\lambda_g((AR)^{2\epsilon})+4\epsilon \geq \lambda_f(A)$.
\end{mathlist}

To prove (ii), consider $s<t\in [0,1]$. We need to show that
\[
\Abs{d_f(\phi(s),\phi(t))-d_f(\psi\circ\theta(s),\psi\circ\theta(t))} \;\leq\; 4\epsilon.
\]
This is readily seen, since
\begin{align*}
&\Abs{d_f(\phi(s),\phi(t))-d_f(\psi\circ\theta(s),\psi\circ\theta(t))}\\
&\qquad =\; \Abs[\big]{f(s)+f(t)-2\inf_{[s,t]}f - \big(g(\theta(s))+g(\theta(t)) - 2\inf_{[s,t]}g\circ\theta\big)}\\
&\qquad \leq\; 4\Norm[\infty]{f-g\circ\theta}
\end{align*}

To show (iii), consider a Borel subset $A\subset \mathscr{T}_f$, and let
$\ell$ denote the Lebesgue measure on $[0, 1]$, so that
$\lambda_f(A) = \ell(\phi^{-1}(A))$ and $\lambda_g(A) = \ell(\psi^{-1}(A))$.
Notice that, by definition, $AR=\psi\circ\theta(\phi^{-1}(A))$.  Therefore,
\[
  \lambda_g(AR)
  \;=\; \ell\Big(\psi^{-1}\big(\psi\circ\theta(\phi^{-1}(A))\big)\Big)
  \;\geq\; \ell\big(\theta(\phi^{-1}(A))\big).
\]
Now, it suffices to notice that $\mathrm{Lip}(\theta)<1+\epsilon$ implies that
$\ell(\theta(B)) \geq (1+\epsilon)\ell(B)$ for all Borel subsets $B\subset [0,1]$.
Indeed, this is easily checked for intervals, and extended to Borel sets 
by a monotone class argument.  From this, we obtain
\[
  \lambda_g(AR) \;\geq\; \ell\big(\theta(\phi^{-1}(A))\big) \;\geq\; (1-\epsilon)\ell(\phi^{-1}(A)) \;=\; (1-\epsilon)\lambda_f(A) \;\geq\; \lambda_{f}(A)-\epsilon.
\]
Finally, $\lambda_g((AR)^{2\epsilon}) + 4\epsilon \geq \lambda_g(AR)+\epsilon \geq \lambda_f(A)$.
Therefore,  $\GHPdist(\mathscr{T}_f,\mathscr{T}_g) \leq 4 \epsilon$
and we have proved that 
$d_{\mathrm{Sk}}(f, g) < \epsilon \implies \GHPdist(\mathscr{T}_f,\mathscr{T}_g) \leq 4 \epsilon$,
finishing the proof.
\end{proof}

Let us now turn to Proposition~\ref{propKeyToCRT}. Recall that an admissible
parametrization of $(\mathscr{X}, r, d, \lambda)$ is a càdlàg function
$\phi \colon [0, 1] \to \mathscr{X}$ such that $\phi([0, 1])$ is dense in 
$\mathscr{X}$ and that $t \mapsto \lambda(\phi([0, t]))$ and
$t \mapsto d(r, \phi(t))$ are well-defined random variables.

\begin{repproposition}{propKeyToCRT}
Let $(\mathscr{X}_n, r_n, d_n, \lambda_n)_{n\geq 1}$ be a sequence of random
rooted compact metric probability spaces such that, for each $n\geq 1$,
there exists an admissible parametrization $\phi_n\colon[0,1]\to \mathscr{X}_n$.
Assume that, setting $h_n(t) = d_n(r_n,\phi_n(t))$:
\begin{mathlist}
  \item $\sup_{s,t\in [0,1]} \Abs[\big]{d_n(\phi_n(s),\phi_n(t)) - d_{h_n}(s, t)} \overset{d}{\longrightarrow} 0$. 
  \item $\sup_{t\in [0,1]}\Abs[\big]{\lambda_n(\phi_n([0,t]))-t\,}\overset{d}{\longrightarrow} 0$. 
  \item $(h_n(t))_{t\in [0,1]} \overset{d}{\longrightarrow} (h(t))_{t\in [0,1]}$
    for the Skorokhod topology,
    where $(h(t))_{t\in [0,1]}$ is a random càdlàg function.
\end{mathlist}
Then, $\mathscr{X}_n \overset{d}{\longrightarrow} \mathscr{T}_h$
for the rooted Gromov--Hausdorff--Prokhorov topology.
\end{repproposition}

\begin{proof}
Since by Lemma~\ref{lemLeGall} the assumption (iii) ensures that
$d_{\mathrm{GHP}}(\mathscr{T}_{h_n}, \mathscr{T}_{h}) \to 0$, to prove the
proposition it suffices to show that the
assumptions (i) and (ii) imply that
$d_{\mathrm{GHP}}(\mathscr{X}_n, \mathscr{T}_{h_n})\to 0$.
  
First, by a straightforward extension of Skorokhod's representation theorem  (namely
Theorem~\ref{thmSkorokhod} in Appendix~\ref{appSkorokhod} below), we can assume that the
convergences in assumptions~{(i-iii)} hold almost surely, rather than in distribution.
Note that the fact that $\phi_n$ is assumed to be an admissible parametrization
allows to view the pairs $(\mathscr{X}_n,\phi_n)$ as random variables valued
on a Polish space, which is the key to applying Theorem~\ref{thmSkorokhod}.

Now, as previously,
let $\psi_n \colon [0, 1] \to \mathscr{T}_{h_n}$ be the quotient map
in the construction of $\mathscr{T}_{h_n}$. Remember that, because we have
completed $[0, 1]/\!\!\sim_{d_{h_n}}\!$ to obtain $\mathscr{T}_{h_n}$, the map
$\psi_n$ is not surjective; however, $\psi_n([0, 1])$ is dense in
$\mathscr{T}_{h_n}$.
Let then $R_n$ be the correspondence from
$\mathscr{X}_n$ to $\mathscr{T}_{h_n}$ defined as
\[
  R_n \;=\; \Set{(\phi_n(t), \psi_n(t)) \suchthat t \in [0, 1]} \,.
\]
Since $\phi_n(0) = \rn$ (because $\phi_n$ is an admissible parametrization of
$\mathscr{X}_n$) and since the root of $\mathscr{T}_{h_n}$ is by construction
$\rn' \defas \psi_n(0)$, we have $\rn\, R_n\, \rn'$.
Thus, by Lemma~\ref{lemCorrespToGHP} to show that
$d_{\mathrm{GHP}}(\mathscr{X}_n, \mathscr{T}_{h_n})\to 0$ it suffices to show
that $\dis(R_n) \to 0$.

First, $\mathscr{X}_n R_n = \psi_n([0, 1])$ is dense in $\mathscr{T}_{h_n}$
and $R_n \mathscr{T}_{h_n} = \phi_n([0, 1])$ is dense in $\mathscr{X}_n$.
Therefore,
\begin{equation} \label{eqProofKeyToCRT01}
  \inf \Set{\epsilon > 0 \suchthat
    (\mathscr{X}_n R_n)^{\epsilon / 2} \subset \mathscr{T}_{h_n}
    \text{ and }
    (R_n \mathscr{T}_{h_n})^{\epsilon / 2} \subset \mathscr{X}_n}
    \;=\; 0  \,.
\end{equation}

Second, let $(x, x')$ and $(y, y')$ be any two elements of $R_n$, i.e.\ let
$s, t \in [0, 1]$ and set $(x, x') = (\phi_n(s), \psi_n(s))$ and
$(y, y') = (\phi_n(t), \psi_n(t))$. Then, by assumption (i),
\begin{equation} \label{eqProofKeyToCRT02}
  \Abs{d_n(x, y) \,-\, d_{\mathscr{T}_{h_n}}\!(x', y')} \;=\;
  \Abs[\big]{d_n(\phi_n(s), \phi_n(t)) \,-\, d_{h_n}(s, t)}
  \;\tendsto[\text{unif.\ in.\  } s, t\,]{n\to\infty} 0 \,.
\end{equation}

Let us fix $\epsilon>0$; we now show that for all $n$ large enough, for all Borel subset $A\subset \mathscr{X}_n$, we have
\begin{equation} \label{eqProofKeyToCRT03}
  \lambda_{\mathscr{T}_{h_n}}((AR_n)^{\epsilon})+\epsilon \geq \lambda_n(A).
\end{equation} 
First, note that because $h_n$ converges in $D$, there exists $k\geq 1$ such that for all $n$ large enough, there exists $t^n_0=0 < t^n_1 < \dots < t^n_k=1$ such that for all $j\in \{1,\dots,k\}$, $\mathrm{diam}(\psi_n(I^n_j)) < \epsilon$, where $I^n_j=\COInterval[\normalsize]{t^n_{j-1},t^n_j}$ for $j < k$ and $I^n_k=[t_{k-1},1]$.
Fix $n_0\geq 1$ so that for all $n\geq n_0$, for all $j$,
\[
\Abs{\lambda_n\big(\phi_n\big([0,t^n_j[\big)\big) - t^n_j} < \frac{\epsilon}{4k}, \quad \text{ so that }\quad \Abs{\lambda_n\big(\phi_n\big(I^n_j\big)\big) - (t^n_{j}-t^n_{j-1})} < \frac{\epsilon}{2k}.
\]
Now consider $n\geq n_0$ and choose a Borel subset $A\subset \mathscr{X}_n$.
Writing $B=AR_n$, define
\[
  J = \Set{j\suchthat B\cap \psi_{n}(I^n_j)\neq \emptyset} \supset \Set{j\suchthat A\cap \phi_{n}(I^n_j)\neq \emptyset},
\]
and notice that because $\mathrm{diam}(\psi_n(I^n_j)) < \epsilon$, we have $B^\epsilon\supset \bigcup_{j\in J}\psi_n(I^n_j)$.
Then we have
\[
\lambda_{\mathscr{T}_{h_n}}((AR_n)^{\epsilon}) \geq \sum_{j\in J}\lambda_{\mathscr{T}_{h_n}}(\psi_n(I^n_j)) \geq \sum_{j\in J} (t_{j+1}-t_j).
\]
But $(t_{j+1}-t_j) \geq \lambda_n(\phi_n(I^n_j))-\frac{\epsilon}{2k}$, so
\[
\sum_{j\in J_B} (t_{j+1}-t_j) \geq \sum_{j\in J_B} \lambda_n(\phi_n(I^n_j)) - \frac{\epsilon}{2} \geq \lambda_n(A\cap\phi_n([0,1])) - \frac{\epsilon}{2}.
\]
Finally, note that we fixed $n$ large enough so that $\lambda_n(\phi_n([0,1]))\geq 1-\frac{\epsilon}{2k}\geq 1-\frac{\epsilon}{2}$, hence
\[
  \lambda_n(A\cap\phi_n([0,1])) - \frac{\epsilon}{2} \geq \lambda_n(A) - \epsilon.
\]
Putting the last three displays together, we have proved \eqref{eqProofKeyToCRT03}.

Finally, combining \eqref{eqProofKeyToCRT01}, \eqref{eqProofKeyToCRT02} and
\eqref{eqProofKeyToCRT03}, and recalling the Definition~\ref{defDistortion}
of the Prokhorov distortion, we see that $\dis(R_n) \to 0$. This concludes
the proof.
\end{proof}

\subsection{An extension of Skorokhod's representation theorem} \label{appSkorokhod}

In this appendix, we discuss the extension of Skorokhod's representation
theorem used at the beginning of the proof of Proposition~\ref{propKeyToCRT}.
For the sake of rigour and completeness, we give a formal statement and a proof.

\begin{theorem} \label{thmSkorokhod}
Let $X$ and $Y$ be two Polish spaces endowed with their Borel $\sigma$-field,
 $f\colon X\to Y$ be a Borel
function and $(\mu_n)_{n\geq 1}$ be a sequence of probability measures on $X$
such that $f_*\mu_n$ converges weakly. Then, there exists a probability
measure~$\theta$ on $X^{\N}$ whose $n$-th marginal is $\mu_n$ and which is
such that, letting $(x_n) \sim \theta$, $f(x_n)$ converges almost surely.
\end{theorem}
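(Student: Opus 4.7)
The plan is to combine the classical Skorokhod representation theorem, applied in the target space $Y$, with a disintegration of each $\mu_n$ along the Borel map $f$. Since $Y$ is Polish and $(f_*\mu_n)_{n\geq 1}$ converges weakly to some probability measure $\nu$ on $Y$, the standard Skorokhod representation theorem gives us, on some probability space $(\Omega, \mathcal{F}, \mathbf{P})$, random variables $(Y_n)_{n\geq 1}$ and $Y$ with $Y_n \sim f_*\mu_n$, $Y\sim \nu$, and $Y_n \to Y$ almost surely. The task is then to lift each $Y_n$ to an $X$-valued random variable $X_n$ with $X_n\sim\mu_n$ and $f(X_n)=Y_n$ almost surely; once this is done, taking $\theta$ to be the law of $(X_n)_{n\geq 1}$ on $X^{\mathbb{N}}$ finishes the proof.

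To carry out the lift, I would invoke the disintegration theorem for probability measures on Polish spaces: for each $n\geq 1$, since $X$ is Polish and $f\colon X\to Y$ is Borel, there exists a Borel family of probability kernels $(\mu_n(\cdot\mid y))_{y\in Y}$ such that $\mu_n(\cdot\mid y)$ is concentrated on $f^{-1}(y)$ for $f_*\mu_n$-almost every $y$, and such that $\mu_n(\,\cdot\,) = \int_Y \mu_n(\,\cdot\mid y)\,(f_*\mu_n)(dy)$. Then, enlarging $(\Omega,\mathcal{F},\mathbf{P})$ if necessary, I would define $X_n$ to be a random variable whose conditional law given the entire sequence $(Y_k)_{k\geq 1}$ is $\mu_n(\cdot\mid Y_n)$, with the $X_n$'s chosen conditionally independent given $(Y_k)_{k\geq 1}$.

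By construction, $X_n$ has marginal law $\mu_n$, since integrating the conditional kernel $\mu_n(\cdot\mid y)$ against the law of $Y_n$, namely $f_*\mu_n$, recovers $\mu_n$ by the disintegration formula. Moreover, $f(X_n)=Y_n$ almost surely because $\mu_n(\cdot\mid Y_n)$ is almost surely supported on the fiber $f^{-1}(Y_n)$. Hence $f(X_n) = Y_n \to Y$ almost surely, as desired. Defining $\theta$ as the joint law of $(X_n)_{n\geq 1}$ on $X^{\mathbb{N}}$, which is Polish as a countable product of Polish spaces, gives the required measure.

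The main obstacle is purely measure-theoretic: ensuring that the disintegration $(y, A)\mapsto \mu_n(A\mid y)$ can be chosen Borel-measurable in $y$, so that plugging in the random variable $Y_n$ yields a bona fide random probability kernel and the $X_n$'s constructed from it are genuinely measurable. This is guaranteed by the existence of regular conditional probabilities on Polish spaces (see e.g.\ any standard reference on measure theory). The remaining points — that $X_n$ recovers the marginal $\mu_n$, and that the $f(X_n)$'s inherit the almost sure convergence of $(Y_n)$ — then follow immediately from the definition.
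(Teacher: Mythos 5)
Your proof is correct, and it takes a genuinely different and more modular route than the paper's. The paper builds the coupling from scratch: it constructs a refining sequence of Borel partitions of $Y$ whose atoms have small $\nu$-boundary and mostly small diameter, pulls them back through $f$ to partitions of $X$, and then couples all the $\mu_n$'s via a single uniform random variable $U$ on $[0,1]$ together with explicit measurable maps $\phi_{n,i}$ from subintervals of $[0,1]$ to the atoms (using the Borel isomorphism theorem). In effect the paper reproves the classical Skorokhod representation theorem along the way, adapted to the pushed-forward sequence. You instead invoke the classical Skorokhod theorem as a black box to produce $Y_n\to Y$ almost surely with $Y_n\sim f_*\mu_n$, and then lift through $f$ by disintegration: the Borel family of regular conditional probabilities $\mu_n(\cdot\mid y)$ concentrated on the fibers $f^{-1}(y)$ gives you $X_n$ with $X_n\sim\mu_n$ and $f(X_n)=Y_n$ almost surely. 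Both approaches ultimately rest on the same descriptive-set-theoretic inputs --- the paper's use of the Borel isomorphism theorem to build the $\phi_{n,i}$ is the same kind of machinery as your appeal to regular conditional distributions on standard Borel spaces --- but your argument is shorter and more conceptual, at the cost of being less self-contained, which is the property the paper explicitly aims for. One point worth spelling out, which you gesture at with ``enlarging the probability space'': realizing an $X$-valued variable with prescribed conditional law $\mu_n(\cdot\mid Y_n)$ given the sequence $(Y_k)_k$ requires adjoining auxiliary independent uniform variables and then applying the same Borel-kernel representation; this is routine but it is precisely the measurable-selection work the paper carries out by hand, so it should not be waved away as immediate.
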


\begin{proof}
Let $(\epsilon_m)_{m\geq 1}$ be a sequence of positive numbers such that 
$\sum_m \epsilon_m < \infty$ and let $\nu$ be the weak limit of $f_*\mu_n$.
Since $Y$ is Polish, for all $m\geq 1$ we can find a finite, measurable
partition $B_1^m,\dots, B_{k_m}^m$ of $Y$ satisfying:
\begin{mathlist}
  \item $\nu(\partial B_i^m) =0$ for all $1\leq i \leq k_m$.
  \item $\sum_{i} \nu(B_i^m) < \epsilon_m$, where the sum runs over the indices
    $i$ s.t.\ {$\mathrm{diam}(B_i^m) \geq \epsilon_m$}.
\end{mathlist}
Furthermore, we can assume that these partitions are refining as $m$ increases;
more specifically, that for each $m\geq 1$, there exist
$1 \leq j_1 < j_2 <\dots < j_{k_m} = k_{m+1}$ such that, for all $1\leq i \leq k_m$,
\[
  B_i^m \; = \bigcup_{j = j_{i-1}+1}^{j_i} \!\!\! B_j^{m+1},
\]
with the convention $j_0=0$.
Now, let us define a measurable partition $A_1^m,\dots,A_{k_m}^m$ of $X$ by setting
$A_i^m=f^{-1}(B_i^m)$.
Since $f_*\mu_n \to \nu$ weakly, by the Portmanteau theorem
$\mu_n(A_i^m) \to \nu(B_i^m)$ for all $i$ as $n\to\infty$.
Therefore, for any sequence $\delta_m > 0$
we can find an increasing sequence $(N_m)_{m\geq 1}$ such that for all $m \geq 1$
we have: for all $n\geq N_m$ and $1\leq i \leq k_m$,
\begin{equation} \label{eqAppCDelta}
  \Abs{\sum_{j=1}^{i} \mu_n(A_j^m) -\sum_{j=1}^{i} \nu(B_j^m)} \;<\; \delta_m.
\end{equation}

Let us define, for all positive integers $n,m$ and for all $1\leq i\leq k_m$,
\begin{align*}
\begin{dcases}
  I_{n,i}^m \;=\; \textstyle
    \COInterval{\sum_{j=1}^{i-1} \mu_n(A_j^{m}),\, \sum_{j=1}^{i} \mu_n(A_j^{m})},\\
  J^m_i \;=\; \textstyle \COInterval{\sum_{j=1}^{i-1} \nu(B_j^{m}) +
    \delta_m,\, \sum_{j=1}^{i} \nu(B_j^{m}) - \delta_m},
\end{dcases}
\end{align*}
where, by convention, $J^m_i=\varnothing$ whenever $\sum_{j=1}^{i-1}
\nu(B_j^{m}) + \delta_m \geq \sum_{j=1}^{i} \nu(B_j^{m}) - \delta_m$.
Note that, by construction, $J^m_i \subset I^m_{n,i}$ for all $n \geq N_m$.

We now build a sequence of
random variables $(x_n)$ such that $x_n \sim \mu_n$.
For $n\geq N_1$, let $m_n$ be the integer such that $N_{m_n} \leq n < N_{m_n+1}$.
It is classic that Borel subsets of a Polish space are standard Borel (see e.g.\ \cite[Proposition 3.3.7]{Sri98}),
and this implies~-- either by a direct construction if the subset is countable,
or using the Borel isomorphism theorem \cite[Theorem 3.3.13]{Sri98} otherwise --
that for each {$1\leq i \leq k_{m_n}$}\linebreak such that $\mu_n(A_i^{m_n})>0$, there
exists a measurable map $\phi_{n,i} : I^{m_n}_{n, i} \to A_i^{m_n}$
such that when $U$ is drawn according to the normalized Lebesgue measure on
$I^{m_n}_{n,i}$, the distribution of $\phi_{n,i}(U)$ is $\mu_n(\,\cdot \mid A_i^{m_n})$.
Thus, defining $\phi_n:\COInterval{0,1}\to X$ by
\[
  \phi_n(t) \;=\; \sum_{i=1}^{k_{m_n}} \phi_{n,i}(t)\, \Indic{t\in I^{m_n}_{n, i}}\,,
\]
and then taking $x_n = \phi_n(U)$, where $U$ is a uniform variable
on $\COInterval{0,1}$, we get a random sequence $(x_n)$ such that
$x_n \sim \mu_n$ for all $n$.
Furthermore, notice that, by construction, if $n\geq N_m$ and $U\in I^m_{n,i}$,
then $f(x_n) \in B^m_i$.

Assume without loss of generality that the sequence $(\delta_m)$ satisfies
{$\sum_m \delta_m k_m < +\infty$}. We will now show that this implies that
$(f(x_n))$ is almost surely a Cauchy sequence.
We say that the interval $J^m_i$ is $m$-good if $\mathrm{diam}(B^m_i) \leq \epsilon_m$.
By construction, for a fixed integer $m\geq 1$, the union $G_m = \bigcup_iJ^m_i$ of
$m$-good intervals has Lebesgue measure at least $1-\epsilon_m - 2(k_m+1)\delta_m$.
Therefore, we have
\[
  \sum_{m\geq 1}\Prob{U\notin G_m} \;\leq\;
  \sum_{m\geq 1} \big(\epsilon_m + 2(k_m+1)\delta_m\big) \;<\; \infty,
\]
and so, by the Borel--Cantelli lemma, there almost surely exists $m^*$ such
that for all $m \geq m^*$, there is a unique index $i_m$ such that $U$ is in
the $m$-good interval~$J_{i_m}^m$.
This implies that, almost surely, for all $n'\geq n\geq N_{m^*}$ and writing
$i=i_{m_n}$ to avoid clutter, $U \in I^{m_n}_{n,i}\cap I^{m_n}_{n',i}$
and so $f(x_n),f(x_{n'})\in B^{m_n}_i$ with
$\mathrm{diam}(B^{m_n}_i)\leq \epsilon_{m_n}$.  This shows that $(f(x_n))$ is
almost surely a Cauchy sequence, concluding the proof.
\end{proof}

\subsection{Tail of the size of critical Galton--Watson trees}

In order to make this article as self-contained as possible, we provide a short
proof of Proposition~\ref{propTotalProgenyGW} for the asymptotic equivalent of
the probability that a critical Galton--Watson tree has size~$n$, which
is a key element in our study. As previously, we repeat the statement of the
proposition here for convenience.

\begin{repproposition}{propTotalProgenyGW}
Let $\hat{\mathscr{T}}$ be a critical Galton--Watson tree whose offspring
distribution has a finite variance $\sigma^2 > 0$ and is not supported on $k\N$, for any $k\geq 2$.
Let $\hat{A}_n$ denote the event
$\Set*{\hat{\mathscr{T}} \text{ has } n \text{ vertices}}$. Then,
\[
  \Prob*{\normalsize}{\hat{A}_n} \;\underset{n\to\infty}{\sim}\;
  \frac{1}{\sqrt{2\pi \sigma^2}}\, n^{-3/2}\,.
\]
\end{repproposition}

\begin{proof}
Let $\hat{M}$ denote the offspring distribution of $\hat{\mathscr{T}}$, let
$\xi_1,\xi_2,\dots$ be i.i.d.\ copies of $\hat{M} - 1$ and
set $S_n = \sum_{i=1}^n \xi_i$. As was first noted by Dwass in~\cite{Dwa69},
\begin{align*}
  \Prob*{\normalsize}{\hat{A}_n}
  \;&=\;\, \Prob{S_i\geq 0 \text{ for }1\leq i\leq n-1\text{ and } S_n = -1} \\
  \;&=\;\, \frac{1}{n}\, \Prob{S_n = -1}\,. 
\end{align*}
The first equality is easily seen by marking the root as \emph{to-visit}, and
then at each step removing a vertex from the \emph{to-visit} pile and adding
its children to it: the procedure ends where there are no vertices left to
visit -- which happens after exactly $n$ steps, where $n$ is the total number
of vertices in the tree; and if we let~$\xi_i$ denote the number of
vertices added/removed from the pile at step~$i$, then the number of vertices
on the pile after step~$i$ is exactly $S_i + 1$.

The fact that $\Prob{S_i\geq 0 \text{ for }1\leq i\leq n-1, S_n = -1} = \frac{1}{n}\, \Prob{S_n = -1}$
has become folklore and is a special case of a result sometimes
known as Kemperman's formula or as the hitting time theorem -- see
e.g.~\cite[Theorem~3.14]{Van16}. We mention a simple proof based on
Dvoretzky and Motzkin's cycle lemma~\cite{DvM47}: 
let $\mathcal{S}$ denote the set of vectors $\mathbf{k} = (k_1, \ldots, k_n)$
of increments of $(S_n)_{n \geq 0}$ that are such that $S_n = -1$.
For every permutation $\sigma$ of $\Set{1, \ldots, n}$, if $\mathbf{k} \in \mathcal{S}$
then $\mathbf{k}_\sigma \defas (k_{\sigma(1)}, \ldots, k_{\sigma(n)}) \in \mathcal{S}$
and $\Prob{\xi_1 = k_1, \ldots, \xi_n = k_n} =
\Prob*{\normalsize}{\xi_1 = k_{\sigma(1)}, \ldots, \xi_n = k_{\sigma(n)}}$.

Now, let $\mathfrak{C}(n)$ denote the set of cyclic shifts of $\Set{1, \ldots, n}$,
and define an equivalence relation $\circlearrowleft$ on $\mathcal{S}$ by
saying that $\mathbf{k} \circlearrowleft \mathbf{k}'$ when there exists
$\sigma \in \mathfrak{C}(n)$ such that $\mathbf{k}' = \mathbf{k}_\sigma$.
By the cycle lemma, each equivalence class of $\mathcal{S}_{/\circlearrowleft}$ has exactly
one member such that the corresponding trajectory of $(S_n)$ 
satisfies $S_i\geq 0$ for $1\leq i\leq n-1$ and $S_n = -1$. Let
that member be the representative of its class, and denote by $\mathcal{S}^\star$
the set of those representatives. The cycle lemma also implies
that each equivalence class of $\mathcal{S}_{/\circlearrowleft}$ has cardinal
$n$: indeed, if there existed $\mathbf{k} \in \mathcal{S}^\star$ and
$\sigma, \sigma' \in \mathfrak{C}(n)$ such that $\sigma \neq \sigma'$ and
$\mathbf{k}_\sigma = \mathbf{k}_{\sigma'}$, then by taking
$\rho = \sigma^{-1} \circ \sigma'$ we would have $\rho \neq \mathrm{Id}$ and
yet $\mathbf{k}_\rho \in \mathcal{S}^\star$, contradicting the lemma.
As a result, 
\begin{align*}
  \Prob{S_n = -1} \;
  &=\; \sum_{\sigma \in \mathfrak{C}(n)} \sum_{\,\mathbf{k} \in \mathcal{S}^\star\!} 
    \; \prod_{i = 1}^n \, \Prob{\xi_i = k_{\sigma(i)}} \\
  &=\; n\, \Prob{S_i\geq 0, 1\leq i\leq n-1; S_n = -1} \,.
\end{align*}
Finally, applying a local limit theorem -- see e.g,
\cite[Theorem~3.5.2]{Dur10} -- to $(S_n)_{n\geq 0}$ yields
$\Prob{S_n = -1} \sim 1 / \sqrt{2\pi\sigma^2 n}$, thereby concluding the proof.
\end{proof}

\end{document}